\documentclass[a4paper]{amsart}

\usepackage{graphicx}
\usepackage{amsmath}
\usepackage{amscd}
\usepackage{amsfonts}
\usepackage{mathrsfs}
\usepackage{mathtools}
\usepackage{graphicx}
\usepackage{hyperref}
\usepackage{here}
\usepackage{algorithmic}
\usepackage{algorithm}
\usepackage{url}
\usepackage{stmaryrd}
\usepackage[title]{appendix}
\usepackage{amssymb}
\usepackage[all]{xy}
\usepackage{color}
\usepackage{extarrows}
\usepackage{multirow}
\usepackage{enumitem}
\usepackage{comment}
\usepackage[normalem]{ulem}

\usepackage[foot]{amsaddr}

\numberwithin{equation}{subsection}

\newtheorem{theorem}{Theorem}[subsection]
\newtheorem{lemma}[theorem]{Lemma}
\newtheorem{proposition}[theorem]{Proposition}
\newtheorem{corollary}[theorem]{Corollary}
\theoremstyle{definition}
\newtheorem{definition}[theorem]{Definition}

\newtheorem{remark}[theorem]{Remark}

\theoremstyle{plain}
\newtheorem*{Maintheor}{Main Theorem}

\theoremstyle{plain}
\newtheorem{theor}{Theorem}

\theoremstyle{definition}
\newtheorem{method}{Method}

\begin{document}

\title[Explicit {arithmetic} on curves of genus 3 and applications]{Some explicit {arithmetic} on curves of genus three and their applications}

\author[T. Moriya]{Tomoki Moriya}
\author[M. Kudo]{Momonari Kudo}
%
\address[T. Moriya]{School of Computer Science, University of Birmingham, UK}
\address[M. Kudo]{Fukuoka Institute of Technology, Japan}
\email[T. Moriya]{t.moriya@bham.ac.uk}
\email[M. Kudo]{m-kudo@fit.ac.jp}

\keywords{Genus-$3$ curves, Howe curves, Jacobian varieties, Richelot isogenies, Superspecial curves}
\maketitle
\begin{abstract}

A {\it Richelot isogeny} between Jacobian varieties is an isogeny whose kernel is included in the $2$-torsion subgroup of the domain. 
\textcolor{black}{A} Richelot isogeny whose codomain is the product of two or more principally \textcolor{black}{polarized} abelian varieties is called a {\it decomposed} Richelot isogeny.

In this paper, we develop some explicit \textcolor{black}{arithmetic} on curves of genus $3$, including algorithms to compute the codomain of a decomposed Richelot isogeny.
As solutions to compute the domain of a decomposed Richelot isogeny, explicit formulae of defining equations for Howe curves of genus $3$ are also given.
Using the formulae, we shall construct an algorithm with complexity $\tilde{O}(p^3)$ (resp.\ $\tilde{O}(p^4)$) to enumerate all hyperelliptic (resp.\ non-hyperelliptic) superspecial Howe curves of genus $3$.
\end{abstract}

\section{Introduction}\label{sec:intro}

Throughout, a curve means a non-singular projective variety of dimension one.
\textcolor{black}{The complexity is measured by the number of arithmetic operations in a field.}
Let $k$ be an algebraically closed field of characteristic $p \neq 2$, and $(A,D)$ a principally polarized abelian variety over $k$.
For a curve $C$ of genus $g$ over $k$, its automorphism group is denoted by $\mathrm{Aut}(C)$.
We also denote respectively by $J(C)$ and $\Theta_C$ the Jacobian variety of $C$ and the principal polarization on $J(C)$ given by $C$.
We say that $J(C)$ is {\it decomposable} (resp.\ {\it completely decomposable}) if there exists an isogeny $J(C) \to J(C_1) \times \cdots \times J(C_n)$ (resp.\ $E_1 \times \cdots \times E_g$) for some curves $C_1, \ldots , C_n$ (resp.\ $g$ elliptic curves $E_1, \ldots , E_g$), and in this case such an isogeny is called a decomposed (resp.\ completely \textcolor{black}{decomposed}) isogeny.
Studying the decomposition of Jacobian varieties is classically important, see e.g., \cite{Ekedahl1993ExemplesDC}, \cite{Kani1989IdempotentRA}, \cite{JenniferPaulhus2008}, \cite{PR17}.
In particular, a method by Kani-Rosen provided in their celebrated paper~\cite{Kani1989IdempotentRA} enables us to find some curves $C_i$ as above as quotients of $C$ by subgroups of $\mathrm{Aut}(C)$, when $\mathrm{Aut}(C)$ is non-trivial.

This paper focuses on the decomposition of Jacobian varieties by {\it Richelot isogenies}.
Here, an isogeny $\phi : J(C) \to A$ is called a {\it Richelot isogeny} if $\phi^{\ast}(\Theta_C) \approx 2 D$, where ``$\approx$'' denotes algebraic equivalence.
Richelot isogenies are generalizations of $2$-isogenies of elliptic curves, and play important roles to analyze the security of cryptosystems constructed \textcolor{black}{by means of isogeny graphs}, see e.g., \cite{CDS}, \cite{CS}.
For a hyperelliptic curve $C$ of genus $2$ (resp.\ $3$), Katsura-Takashima \cite{KT2020} (resp.\ Katsura~\cite{katsura2021decomposed}) showed that the existence of a decomposed Richelot isogeny from $J(C)$ is equivalent to that of an order-$2$ (long) automorphism in the reduced automorphism group $\overline{\mathrm{Aut}}(C)$, where $\overline{\mathrm{Aut}}(C) := \mathrm{Aut}(C) / \langle \iota_C \rangle$ with the hyperelliptic involution $\iota_C$ of $C$.

They showed also in \cite{Katsura-Takashima} that any hyperelliptic curve \textcolor{black}{$C$} of given genus with an \textcolor{black}{involution $\sigma \in \mathrm{Aut}(C)$ different from $\iota_C$ (such a $\sigma$ is called an extra involution of $C$)} has a decomposed Richelot isogeny $\! J(C) \! \to \! J(C/\langle \sigma \rangle) \! \times \! J(C /\langle \sigma \! \circ \! \iota_C \rangle)$, and that any (generalized) {\it Howe curve} has a decomposed Richelot isogeny, where a Howe curve is the normalization of a fiber product of hyperelliptic curves.

In this paper, we start with writing down Katsura-Takashima's construction~\cite{Katsura-Takashima} for the hyperelliptic case as an explicit algorithm, so as to symbolically compute defining equations for the quotient curves $C/\langle \sigma \rangle$ and $C/\langle \sigma \circ \iota_C \rangle$ as above.
The complexity of the algorithm is also determined.
More precisely, we prove the following:

\begin{theor}[Theorem \ref{thm:compute decom hyper} below]
Let $C$ be a hyperelliptic curve of genus $g \geq 2$ over $k$ with an extra involution $\sigma$.
Given $C$, there is an algorithm (explicitly, Algorithm \ref{alg:decom richelot}) to compute equations defining the quotient curves $C/\langle \sigma \rangle$ and $C/\langle \sigma \circ \iota_C \rangle$, which define the codomain of a decomposed Richelot isogeny described above.
\textcolor{black}{
Moreover, all the computations in the algorithm are done in the quadratic extension of a subfield $K$ of $k$, over which all the Weierstrass points of $C$ are defined.
Furthermore, the number of arithmetic operations (in the quadratic extension field) required for executing the algorithm is upper-bounded by a constant.}
\end{theor}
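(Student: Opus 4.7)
The plan is to use the classical normal-form reduction for hyperelliptic curves with an extra involution: bring $\sigma$ to the standard form $(x,y)\mapsto(-x,y)$ by a Möbius change of coordinate on $\mathbb{P}^1$, after which the two quotient curves can be written down by inspection. Concretely, I first fix an affine model $y^2=f(x)$ for $C$, with $\deg f\in\{2g+1,2g+2\}$. Because $\iota_C$ is central in $\mathrm{Aut}(C)$ for $g\geq 2$, the involution $\sigma$ commutes with $\iota_C$ and therefore descends to an involution $\bar\sigma$ of the quotient $\mathbb{P}^1=C/\langle\iota_C\rangle$. From the given data of how $\bar\sigma$ permutes the $2g+2$ branch points of the hyperelliptic cover, one recovers $\bar\sigma$ as an explicit Möbius transformation; its two fixed points $\alpha,\beta\in\mathbb{P}^1$ are then the roots of a single quadratic polynomial whose coefficients are polynomial in the Weierstrass coordinates. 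This quadratic is the only source of a field extension, and yields the quadratic extension $K'/K$ appearing in the statement.

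Next I apply the Möbius transformation $T$ sending $\alpha\mapsto 0$ and $\beta\mapsto\infty$, and substitute $x=T^{-1}(x')$ into $y^2=f(x)$, rescaling $y$ to $y'$ so that the new equation is again polynomial in $(x',y')$. In these coordinates $\sigma$ acts as $(x',y')\mapsto(-x',y')$, and the transformed right-hand side must then be a polynomial $F(x'^2)$ in $x'^2$, since it is an even function of $x'$. From this normal form the two quotients are immediate: $C/\langle\sigma\rangle$ is defined by $y'^2=F(u)$ with $u=x'^2$, and $C/\langle\sigma\circ\iota_C\rangle$ by $v^2=u\,F(u)$ with $v=x'y'$, since $v^2=(x')^2\,y'^2=u\,F(u)$. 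These are the two curves whose Jacobians form the codomain of the decomposed Richelot isogeny of \cite{Katsura-Takashima}, so the algorithm that outputs the triple $(T,F,\{y'^2=F(u),\ v^2=uF(u)\})$ does what is required.

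For the complexity bound, solving the quadratic for $\alpha,\beta$ is $O(1)$ operations in $K'$, the substitution of $T^{-1}$ into a polynomial of degree at most $2g+2$ costs $O(g^2)$ operations, and splitting off the even part to read off $F$ is linear in $g$; for fixed $g\geq 2$ this is bounded by a constant independent of $C$ and of the field $K'$, as required. The main difficulty I anticipate is organisational rather than mathematical: writing the substitution and rescaling of $y$ carefully enough that the output is actually polynomial and that the Weierstrass points of the two quotients are transparent, while treating the $\deg f=2g+2$ and $\deg f=2g+1$ cases uniformly (in the latter, $\infty$ is itself a Weierstrass point and may well be one of $\alpha,\beta$) without introducing spurious denominators or singular fibres at $\alpha$ or $\beta$.
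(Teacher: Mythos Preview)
Your approach is essentially the same as the paper's: both arguments reduce $\sigma$ to the normal form $(x,y)\mapsto(-x,y)$ by diagonalizing the induced M\"obius involution $\bar\sigma$ on $\mathbb{P}^1$ (the paper's $\lambda=\sqrt{\alpha^2+\beta\gamma}$ is precisely your quadratic for the fixed points), and then read off the two quotients as $y^2=F(u)$ and $y^2=uF(u)$ via Lemma~\ref{lem:C/sigma hyper}.

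The one point on which the paper is more explicit is that the input is only $C$, not $\sigma$ or its action on the branch locus: you write ``from the given data of how $\bar\sigma$ permutes the $2g+2$ branch points,'' but this permutation is not given and must be found. The paper handles this by brute force (Algorithm~\ref{alg:find long auto hyper}): for each ordered pair of Weierstrass points, posit that $\bar\sigma$ exchanges them with two fixed reference points, write down the unique candidate M\"obius involution via an explicit $2\times 2$ matrix (citing Elkies), and test whether the transformed Weierstrass coordinates come in $\pm$-pairs. This search is over at most $2(2g+1)(2g-1)$ candidates, which is where the ``constant for fixed $g$'' bound actually comes from. Your outline would be complete once you insert this enumeration step; the rest matches.
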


In the case of genus $3$, algorithms for completely decomposed Richelot isogenies are also provided:

\begin{theor}[Theorems \ref{thm:complete hyper} and \ref{thm:decom richelot non-hyper} below]
Let $C$ be a curve of genus $3$ over $k$, say, either of the following:
\begin{enumerate}
    \item A smooth plane quartic $F(X,Y,Z) = 0$, where $F$ is a ternary quartic form over a subfield $K$ of $k$.
    \item A hyperelliptic curve $y^2 = f(x)$ with a square-free octic $f$ over $k$, where the roots of $f$ belongs to a subfield $K$ of $k$.
\end{enumerate}
We suppose that $C$ has two order-$2$ automorphisms $\sigma$ and $\tau$ with $\sigma \neq \tau$ such that $\sigma  \tau = \tau  \sigma$. \textcolor{black}{We also suppose $k$ is an algebraic closure of a finite field if $C$ is a non-hyperelliptic curve.}
Given $C$, there exists an \textcolor{black}{algorithm} to compute defining equations for $C/\langle \sigma \rangle$, $C/\langle \tau \rangle$, and $C/\langle \sigma  \tau \rangle$ as genus-$1$ curves, which define the codomain of a completely decomposed Richelot isogeny $J(C) \to J(C/\langle \sigma \rangle) \times J(C /\langle \tau \rangle) \times J(C/\langle \sigma \tau \rangle)$.
\textcolor{black}{
Moreover, all the computations in the algorithm are done in a finite extension of $K$, where the extension degree does not depend on the characteristic of $k$.
Furthermore, the number of arithmetic operations (in the finite extension field) required for executing the algorithm is upper-bounded by a constant.}
\end{theor}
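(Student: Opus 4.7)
The strategy is to handle the two cases separately and, in each, exhibit explicit equations for the three quotient curves, then invoke the Kani-Rosen decomposition theorem to certify the resulting isogeny.

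In the hyperelliptic case (case (2)), I would iterate the construction from the first theorem above. Because $\sigma$ and $\tau$ are distinct commuting involutions and the three quotients are required to have genus $1$, none of $\sigma$, $\tau$, $\sigma\tau$ can equal $\iota_C$; in particular each of $\sigma$ and $\tau$ is an extra involution of $C$. Applying the first theorem to $\sigma$ produces defining equations for $C/\langle\sigma\rangle$ and $C/\langle\sigma \iota_C\rangle$. The commutativity of $\tau$ with $\sigma$ and $\iota_C$ implies that $\tau$ descends to an involution of each quotient; applying the first theorem once more to the quotient of genus $\geq 2$ yields the remaining elliptic factor. A cleaner viewpoint is to set $G := \langle \sigma, \tau \rangle$, which is a Klein four group, and invoke the Kani-Rosen idempotent relation in $\mathbb{Q}[G]$, giving the isogeny $J(C) \to J(C/\langle\sigma\rangle) \times J(C/\langle\tau\rangle) \times J(C/\langle\sigma\tau\rangle)$; that this is a Richelot isogeny follows by tracking the pullbacks of polarizations through the idempotent decomposition.

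For the non-hyperelliptic case (case (1)), $C$ is a smooth plane quartic $F(X,Y,Z)=0$, and $\sigma, \tau$ are realized by matrices in $\mathrm{GL}_3(k)$ (unique up to scalars). Two commuting order-$2$ elements of $\mathrm{PGL}_3$ can be simultaneously diagonalized over a finite extension $K'$ of $K$ of bounded degree: one computes the eigenvalues of lifts $\tilde\sigma, \tilde\tau$ by splitting cubic characteristic polynomials, then adjusts by scalars to ensure $\tilde\sigma^2 = \tilde\tau^2 = \mathrm{id}$, which introduces at most quadratic extensions. After a common diagonalization $\tilde\sigma = \mathrm{diag}(\varepsilon_1,\varepsilon_2,\varepsilon_3)$ and $\tilde\tau = \mathrm{diag}(\delta_1,\delta_2,\delta_3)$ with $\varepsilon_i,\delta_i \in \{\pm 1\}$, the quartic $F$ becomes a sum of monomials on which each involution acts by $\pm 1$. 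The quotient $C/\langle\sigma\rangle$ is then the image of $C$ under the map given by the parity-even monomials (landing in a weighted projective plane), and one reads off an explicit defining equation in these invariants; a Riemann-Hurwitz computation based on the eigenvalue signature confirms genus $1$, after which a rational point (guaranteed by $k=\overline{\mathbb{F}_p}$ and the Hasse-Weil bound over a finite extension of $K$) allows conversion to a Weierstrass equation.

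The main obstacle is the non-hyperelliptic case, specifically the branching on the eigenvalue signatures of $(\sigma, \tau)$: for each of the finitely many admissible signatures (e.g., $\sigma$ of signature $(+,+,-)$ or $(+,-,-)$, paired with the compatible signatures of $\tau$) the form of the invariants and the resulting defining equation differ, so the algorithm must enumerate and handle each case separately; translating each weighted-projective equation into a standard Weierstrass model is tedious but mechanical. The complexity claim then follows once all branches are executed over a common finite extension $K'/K$ whose degree depends only on the eigenvalue-normalization step and not on $p$, so the total number of arithmetic operations in $K'$ is bounded by a constant.
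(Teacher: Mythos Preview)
Your hyperelliptic strategy contains a genuine error. Iterating the first theorem through the genus-$2$ intermediate quotient does not produce $C/\langle\tau\rangle$ and $C/\langle\sigma\tau\rangle$. After the first application you have $E_1 = C/\langle\sigma\rangle$ of genus $1$ and $H := C/\langle\sigma\iota_C\rangle$ of genus $2$, and the image of $\iota_C$ on $H$ is the hyperelliptic involution $\iota_H$. Applying the first theorem to $H$ with the descended involution $\tau'$ then yields
\[
H/\langle\tau'\rangle = C/\langle\sigma\iota_C,\tau\rangle,\qquad H/\langle\tau'\iota_H\rangle = C/\langle\sigma\iota_C,\tau\iota_C\rangle = C/\langle\sigma\iota_C,\sigma\tau\rangle.
\]
Each of these is a degree-$2$ quotient of $C/\langle\tau\rangle$ (respectively $C/\langle\sigma\tau\rangle$), hence a genus-$1$ curve that is $2$-\emph{isogenous} to the required one rather than isomorphic to it. Correspondingly, the composite $J(C)\to E_1\times J(H)\to E_1\times E_2'\times E_3'$ pulls the principal polarization back by a factor of $4$, so it is not a Richelot isogeny at all. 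The Kani--Rosen argument you give afterwards does certify the correct isogeny, but it does not repair the algorithm. The fix is simply not to iterate: apply the normalization procedure of the first theorem to $C$ three times independently, once for each of the extra involutions $\sigma$, $\tau$, $\sigma\tau$, and keep the genus-$1$ quotient each time. This is precisely what the paper does: Algorithm~\ref{alg:find long auto hyper} lists all extra involutions of $C$ together with their diagonalizing changes of variable, one checks which pairs commute, and the three genus-$1$ quotients are read off from the resulting normal forms $y^2=f(x^2)$ via Lemma~\ref{lem:C/sigma hyper}.

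In the non-hyperelliptic case your outline is broadly correct but diverges from the paper in two respects. First, you treat $\sigma$ and $\tau$ as given matrices in $\mathrm{GL}_3$, whereas the statement gives only $C$; the paper avoids a separate search for $\sigma,\tau$ by parametrizing the simultaneous-diagonalization matrix $Q$ directly (in one of five normalized shapes), imposing that $F((X,Y,Z)\cdot{}^tQ)$ lie in the standard form $a_1X^4+a_2Y^4+a_3X^2Y^2+a_4X^2Z^2+a_5Y^2Z^2+a_6Z^4$, and solving the resulting zero-dimensional quartic system for the entries of $Q$ by a Gr\"obner basis computation. Second, your anticipated ``main obstacle'' of branching over eigenvalue signatures is illusory: in $\mathrm{PGL}_3$ any pair of commuting involutions is conjugate to $\mathrm{diag}(-1,1,1)$ and $\mathrm{diag}(1,-1,1)$ after a coordinate permutation, so there is a single case, and the three elliptic quotients are obtained from the standard form by the explicit substitutions recorded in \eqref{eq:non-hyper special target curves}.
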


The inverse problem is also considered:
Given $C_1, \ldots , C_n$, compute a defining equation for $C$ such that $J(C)$ is Richelot isogenous to $J(C_1) \times \cdots \times J(C_n)$.
In Section \ref{sec:main2}, we shall give a complete solution for this problem in the case of genus $3$.
In this case, the codomain of a decomposed Richelot isogeny is either $E \times J(H)$ for a genus-$1$ curve $E$ and a genus-$2$ curve $H$ or $E_1 \times E_2 \times E_3$ for genus-$1$ curves $E_1$, $E_2$, and $E_3$.
When $E$ and $H$ share four Weierstrass points or each two elliptic curves $E_i$ and $E_j$ share two Weierstrass points, it is straightforward that we may take $C$ as the normalization of $E \times_{\mathbb{P}^1} H$ or $E_1 \times_{\mathbb{P}^1} E_2$, which is nothing but a Howe curve of genus $3$, and the problem has been reduced into finding a defining equation for a Howe curve of genus $3$.
The former case is solved by a general fact on the fiber product of hyperelliptic curves, see Lemma \ref{lem:C/sigma hyper} below.
The following \textcolor{black}{main} theorem is our solution for the latter case:

\begin{Maintheor}[\textcolor{black}{Theorems \ref{thm:Howe curve hyper elliptic} and \ref{thm:Howe curve non-hyper2}}]
\textcolor{black}{Let $E_1\colon y^2=x(x-\alpha_2)(x-\alpha_3)$ and $E_2\colon y^2=x(x-\beta_2)(x-\beta_3)$ be elliptic curves, where $\alpha_2$, $\alpha_3$, $\beta_2$, and $\beta_3$ are pairwise distinct elements in $k \smallsetminus \{ 0 \}$.}
An equation defining the normalization $C$ of $E_1 \times_{\mathbb{P}^1} E_2$ is given as \eqref{eq:Howe hyper equation} (resp.\ \eqref{eq:Howe non-hyper equation}) in the hyperelliptic (resp.\ non-hyperelliptic) case. 
\end{Maintheor}

As an important corollary, \textcolor{black}{if we take $\alpha_2 = 1$, $\alpha_3 = \nu$, $\beta_2 = \mu$, and $\beta_3 = \mu \lambda$ with $\nu = \mu^2 \lambda$,} any coefficient of \eqref{eq:Howe hyper equation_Legendre} for superspecial $C$ belongs to $\mathbb{F}_{p^2}$, see Corollary \ref{cor:max} below (cf.\ a resent result~\cite[Theorem 1.1]{Ohashi1} by Ohashi).

Furthermore, computational methods to enumerate superspecial Howe curves of genus $3$ will be given in Section \ref{sec:ssp}, and their complexities are also determined:

\begin{theor}\label{thm:main4}
\textcolor{black}{When $k = \overline{\mathbb{F}_{p}}$ for a prime $p$}, all isomorphism classes of superspecial hyperelliptic (resp.\ non-hyperelliptic) Howe curves of genus $3$ are enumerated in $\tilde{O}(p^3)$ (resp.\ $\tilde{O}(p^4)$), where Soft-O notation omits logarithmic factors.
\end{theor}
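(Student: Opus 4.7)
The plan is to turn the explicit parameterizations of Howe curves in the Main Theorem into enumeration algorithms whose search spaces are pruned by the necessary conditions imposed by superspeciality. Since a superspecial genus-$3$ curve $C$ has $J(C)\cong E^3$ for a supersingular elliptic curve $E$, and the completely decomposed Richelot isogeny $J(C)\to E_1\times E_2\times E_3$ given by the commuting involutions on $C$ (Section~\ref{sec:main2}) forces each $E_i$ to be isogenous to $E$, the elliptic factors $E_1$, $E_2$, and $E_3=C/\langle\sigma\tau\rangle$ must all be supersingular. As preprocessing we compute, in $\tilde{O}(p)$, the set $S\subset\mathbb{F}_{p^2}$ of Legendre parameters with supersingular $j$-invariant (the roots of the Hasse polynomial, of degree $(p-1)/2$).

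For the hyperelliptic case we use equation~\eqref{eq:Howe hyper equation_Legendre}, parameterized by $(\lambda,\mu)$ with $E_2$ of Legendre parameter $\lambda$ and $E_1$ of Legendre parameter $\nu=\mu^2\lambda$. By Corollary~\ref{cor:max} we may restrict to $\lambda,\mu\in\mathbb{F}_{p^2}$. Iterating over pairs $(\lambda,\nu)\in S\times S$ and recovering the at-most-two values of $\mu$ from $\mu^2=\nu/\lambda$ yields $O(p^2)$ candidate Howe curves. For each candidate we test superspeciality by computing the $3\times 3$ Hasse--Witt matrix of the hyperelliptic equation $y^2=f(x)$ read off from~\eqref{eq:Howe hyper equation_Legendre}; this reduces to extracting $O(1)$ specific coefficients of $f^{(p-1)/2}$ and costs $\tilde{O}(p)$. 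Summing gives $\tilde{O}(p^2)\cdot\tilde{O}(p)=\tilde{O}(p^3)$, and the final isomorphism-class deduplication is absorbed in the $\tilde{O}$.

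For the non-hyperelliptic case we use equation~\eqref{eq:Howe non-hyper equation} and normalize $\alpha_2=1$ by a rescaling of $x$, leaving parameters $(\alpha_3,\beta_2,\beta_3)\in\mathbb{F}_{p^2}^3$. The constraint that $E_1\colon y^2=x(x-1)(x-\alpha_3)$ be supersingular forces $\alpha_3\in S$ ($O(p)$ values), and the constraint that $E_2\colon y^2=x(x-\beta_2)(x-\beta_3)$ be supersingular forces $\beta_3/\beta_2\in S$; iterating $\beta_2\in\mathbb{F}_{p^2}^\times$ and $\beta_3/\beta_2\in S$ gives $O(p^3)$ pairs $(\beta_2,\beta_3)$. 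Combined we obtain $O(p^4)$ candidate plane quartics, and for each one the superspeciality test reduces---via the explicit formula for $j(E_3)$ arising from the Richelot construction and the precomputed supersingular set $S$---to an $\tilde{O}(1)$ lookup followed by a bounded number of coefficient extractions from the Cartier--Manin matrix of the plane quartic. The total is $\tilde{O}(p^4)$.

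The main obstacle lies in two places. First, one must establish \emph{completeness}: every isomorphism class of superspecial Howe curves of genus $3$ admits a representative in the enumerated parameter grid. This follows from Corollary~\ref{cor:max} (which places the coefficients in $\mathbb{F}_{p^2}$) together with the rigidity of the fiber-product description of Howe curves and the explicit scaling used to pin down the Weierstrass points. Second, one must control the \emph{per-candidate superspeciality test}: bounding its cost by $\tilde{O}(p)$ in the hyperelliptic case and $\tilde{O}(1)$ in the non-hyperelliptic case requires leveraging the very explicit form of the defining equations so that only a constant number of coefficients of high-degree polynomials (such as $f^{(p-1)/2}$ or $F^{p-1}$) need ever be computed, in tandem with the precomputed supersingular data $S$ to avoid redundant Hasse-polynomial evaluations.
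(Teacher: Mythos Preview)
Your hyperelliptic enumeration has a genuine completeness gap. You parameterize via equation~\eqref{eq:Howe hyper equation_Legendre}, which describes only hyperelliptic Howe curves of \emph{Oort type}, i.e., those birational to $E_1\times_{\mathbb{P}^1}E_2$ for two elliptic curves sharing two Weierstrass points. But a hyperelliptic Howe curve of genus~$3$ is, by Lemma~\ref{lem:V4}, any hyperelliptic curve with $\mathrm{Aut}(C)\supset V_4$; such a curve is always of \emph{Howe type} $E\times_{\mathbb{P}^1}H$ with $g(E)=1$, $g(H)=2$, and only \emph{some} of these are simultaneously of Oort type. Concretely, if $\mathrm{Aut}(C)=V_4$ exactly, then the three nontrivial involutions are $\iota_C$, $\sigma$, and $\sigma\iota_C$, with quotients $\mathbb{P}^1$, $E$ (genus~$1$), and $H$ (genus~$2$); there is only \emph{one} long automorphism, so $C$ cannot be written as a fiber product of two elliptic curves. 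The paper's Table~\ref{table:1} confirms this: for $p\in\{11,19,29,37,43,53,\ldots\}$ there exist superspecial hyperelliptic Howe curves of Howe type but \emph{none} of Oort type. Your algorithm would output an empty list for those primes. Corollary~\ref{cor:max}, which you invoke for completeness, concerns only Oort-type curves and does not help here. The paper's Method~\ref{method1} instead loops over superspecial genus-$2$ curves $H_{a,b,c}$ (enumerated in $O(p^3)$ via Richelot isogenies) and checks supersingularity of the companion elliptic curve $E_{a,b,c}$; this covers all Howe-type curves.

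Your non-hyperelliptic enumeration also has an unjustified step: you iterate $\beta_2\in\mathbb{F}_{p^2}^\times$, but after normalizing $\alpha_2=1$ one has $\beta_2=\mu$, and Corollary~\ref{cor:max2} (with the quadratic~\eqref{eq:quad_mu}) only places $\mu$ in $\mathbb{F}_{p^4}$, not $\mathbb{F}_{p^2}$; the remark following that corollary says $C$ is defined over $\mathbb{F}_{p^4}$ in general in this parameterization. Iterating $\beta_2$ over $\mathbb{F}_{p^4}$ would give $O(p^6)$ candidates, destroying your bound. The paper's Method~\ref{method3} avoids this by looping instead over triples $(j_1,j_2,j_3)$ of supersingular $j$-invariants ($O(p^3)$ triples) and, for each, recovering the finitely many $\mu$ from~\eqref{eq:quad_mu}; the $\tilde{O}(p^4)$ cost then comes not from the search but from the isomorphism classification of the curves with $\mathrm{Aut}(C)\supsetneq V_4$ (Step~4b), using the bound of Lemma~\ref{lem:V4isomOort} for the $V_4$ case.
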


In our methods, we apply \textcolor{black}{Main Theorem} to computing equations defining superspecial Howe curves, and use some criteria given in Subsection \ref{subsec:isom} to make the isomorphism classification efficient.

We implemented the enumeration methods on Magma~\cite{Magma}, and the source \textcolor{black}{code} are available at the web page of the second author, see Subsection \ref{subsec:imp} for the url.
Computational results obtained by our implementation will be described in Subsection \ref{subsec:imp}.

\section{Preliminaries}\label{sec:pre}
\textcolor{black}{This section is mainly devoted to a review of some basic facts on hyperelliptic curves, generalized Howe curves, and Richelot isogenies between abelian varieties.}
\if 0
We first recall the definition of hyperelliptic curves, their isomorphisms and automorphisms, and also study some properties of automorphisms of hyperelliptic curves and their quotient curves.
\textcolor{black}{We then recall the notion of decomposed Richelot isogenies outgoing from the Jacobian varieties of curves.}
In particular, we describe properties of decomposed Richelot isogenies for genus-3 curves.
Throughout the rest of this paper, we always let $k$ be the algebraic closure $\overline{\mathbb{F}_p}$ of a prime field of characteristic $p\geq 5$.
\fi

\subsection{Hyperelliptic curves and their isomorphisms.}\label{subsec:hyp}

\textcolor{black}{
Let $k$ be a field of characteristic $p$ with $p \neq 2$.
Let $C$ be a hyperelliptic curve of genus $g \geq 2$ over $k$, and $\pi_C : C \to \mathbb{P}^1$ its structure map of degree $2$.
Then} $C$ has a unique involution $\iota_C : C \to C$ such that $C/\langle \iota_C \rangle \cong \mathbb{P}^1$, which is called the {\it hyperelliptic involution} of $C$. 
Recall that $\iota_C$ is the unique non-trivial automorphism of $C$ \textcolor{black}{with} $\pi_C \circ \iota_C = \pi_C$.
We also call an involution of $C$ different from $\iota_C$ an {\it extra involution}.
A {\it Weierstrass point} of $C$ is an invariant point under $\iota_C$, and there are precisely $2g+2$ Weierstrass points of $C$.
It is well-known that any isomorphism of hyperelliptic curves maps Weierstrass points of the domain to ones of the codomain.

\if 0
\begin{lemma}\label{lem:C hyper Weier}
Let $C$ and $C'$ be hyperelliptic curves of genus $g$, and let $P_1,\ldots ,P_{2g+2}$ be the Weierstrass points of $C$.
Let $\sigma$ be an isomorphism from $C$ to $C'$.
Then, $\sigma(P_1),\ldots,\sigma(P_{2g+2})$ are the Weierstrass points of $C'$.
\end{lemma}
\begin{proof}

We first claim $\iota_{C'} = \sigma \circ \iota_C \circ \sigma^{-1}$.
Indeed, it follows from the uniqueness (up to automorphisms of $\mathbb{P}^1$) of the degree-$2$ map $\pi_{C}$ that there exists an automorphism $\alpha : \mathbb{P}^1 \to \mathbb{P}^1$ with $\pi_{C'} \circ \sigma = \alpha \circ \pi_C$.
Here one has
\[
\pi_{C'} \circ (\sigma \circ \iota_C \circ \sigma^{-1}) = \alpha \circ (\pi_{C} \circ \iota_C) \circ \sigma^{-1} = \alpha \circ \pi_{C} \circ \sigma^{-1} = \pi_{C'},
\]
and thus the claim follows from the uniqueness of $\iota_{C'}$.

Let $P$ be a Weierstrass point of $C$, namely $P=\iota_C(P)$.
By the claim, we have $\iota_{C'}\circ \sigma = \sigma \circ \iota_C$, and hence $\sigma(P)= \sigma (\iota_C(P)) = \iota_{C'}(\sigma(P))$, as desired.
\end{proof}
\fi

{
In general, a hyperelliptic curve $C$ can be represented by gluing the following two curves in $\mathbb{A}^2$:
\[
C_1\colon y^2=f_1(x),\quad C_2\colon y^2=f_2(x)
\]
via the map $C_1\smallsetminus\{(0,*)\}\to C_2\smallsetminus\{(0,*)\} \ ; \ (x,y)\mapsto (1/x,y/x^{g+1})$, where $f_1$ and $f_2$ are polynomials of degree $2g+1$ or $2g+2$ satisfying $f_1(1/x)x^{2g+2}=f_2(x)$.
Note that the map $C \rightarrow \mathbb{P}^1$ given by the projection onto the $x$-coordinate is a morphism of degree $2$.
For simplicity, we usually represent $C$ by $y^2=f_1(x)$ \textcolor{black}{or $y^2 = f_2(x)$}, and in this case the hyperelliptic involution $\iota_C$ is $(x,y) \mapsto (x,-y)$.
}

The following important lemma enables us to describe isomorphisms of hyperelliptic curves explicitly:

\begin{lemma}[{\cite[\S 2E1]{lercier2013fast}}]\label{lem:hyperelliptic matrix}
Let $C_1\colon y^2=f_1(x)$ and $C_2\colon y^2=f_2(x)$ be hyperelliptic curves of genus $g$ over $k$. Any isomorphism $\sigma \colon C_1 \to C_2$ can be represented by a pair $(A, \lambda) \in \mathrm{GL}_2(k)\times k^\times$ with $A = \alpha E_{11} + \beta E_{12} + \gamma E_{21} + \delta E_{22}$ for $\alpha,\beta, \gamma, \delta \in k$ such that
\begin{equation}\label{eq:isom}
\sigma(x,y)=\left(\frac{\alpha x+ \beta}{\gamma x+ \delta},\frac{\lambda y}{(\gamma x+ \delta)^{g+1}}\right),
\end{equation}
where $E_{ij}$ denotes the $2 \times 2$-matrix with $1$ in the $(i,j)$ entry and $0$'s elsewhere.
The representation is unique up to the equivalence $(A,\lambda)\equiv (\mu A,\mu^{g+1}\lambda)$ for $\mu\in k^\times$.
\end{lemma}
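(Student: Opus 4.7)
The plan is to compute $\sigma$ on function fields by determining $\sigma^\ast(x)$ and $\sigma^\ast(y)$ separately, using that $\sigma$ must intertwine the hyperelliptic involutions and so descend to a Möbius transformation of $\mathbb{P}^1$.

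First I would observe that $\sigma \circ \iota_{C_1} \circ \sigma^{-1}$ is a non-trivial involution of $C_2$ whose quotient is $\mathbb{P}^1$, and so by uniqueness of the hyperelliptic involution equals $\iota_{C_2}$. Consequently $\sigma$ descends to an isomorphism $\bar\sigma : \mathbb{P}^1 \to \mathbb{P}^1$, necessarily a Möbius transformation $x \mapsto (\alpha x + \beta)/(\gamma x + \delta)$ for some $A \in \mathrm{GL}_2(k)$, unique up to scalar, which fixes $\sigma^\ast(x)$. Writing every element of $k(C_1) = k(x)[y]/(y^2 - f_1(x))$ uniquely as $P(x) + y Q(x)$, the intertwining relation $\iota_{C_1}^\ast \circ \sigma^\ast = \sigma^\ast \circ \iota_{C_2}^\ast$ combined with $\iota_{C_2}^\ast(y) = -y$ forces $\sigma^\ast(y) = y\,h(x)$ for some $h \in k(x)$. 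Squaring and using $\sigma^\ast(y^2) = f_2(\sigma^\ast(x))$ then yields the rational-function identity
\[
f_1(x)\,h(x)^2\,(\gamma x + \delta)^{2g+2} = F_2(\alpha x + \beta,\,\gamma x + \delta),
\]
where $F_2(X,Z) := Z^{2g+2} f_2(X/Z)$ is the degree-$(2g+2)$ homogenization of $f_2$, absorbing uniformly the cases $\deg f_2 \in \{2g+1, 2g+2\}$.

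The key step is then to show that $F_2(\alpha x + \beta, \gamma x + \delta)$ is a nonzero scalar multiple of $f_1(x)$, so that $h(x)^2 (\gamma x + \delta)^{2g+2}$ is a nonzero constant $\lambda^2$, and hence $h(x) = \lambda/(\gamma x + \delta)^{g+1}$ after absorbing a sign into $\lambda$. For this I would use that $\sigma$, equivalently $\bar\sigma$, carries the Weierstrass points of $C_1$ bijectively onto those of $C_2$; under the identification of Weierstrass points with the roots of $f_i$ (together with $\infty$ when $\deg f_i = 2g+1$), this bijection forces the finite zero sets on both sides to coincide. A short degree count in each of the four sub-cases $(\deg f_1, \deg f_2) \in \{2g+1, 2g+2\}^2$, together with the further split $\gamma = 0$ versus $\gamma \neq 0$, then verifies that the $x$-degrees of both sides agree; hence the ratio is a nonzero constant. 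This case-by-case bookkeeping is the main technical obstacle.

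Finally, uniqueness is a direct check: replacing $A$ by $\mu A$ leaves $\bar\sigma$, and hence $\sigma^\ast(x)$, unchanged, while it rescales the denominator $(\gamma x + \delta)^{g+1}$ in $\sigma^\ast(y)$ by $\mu^{g+1}$; to preserve $\sigma^\ast(y) = \lambda y/(\gamma x + \delta)^{g+1}$, one must simultaneously rescale $\lambda$ by $\mu^{g+1}$, yielding the stated equivalence $(A,\lambda) \equiv (\mu A, \mu^{g+1}\lambda)$.
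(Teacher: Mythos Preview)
The paper does not supply its own proof of this lemma: it is quoted directly from \cite[\S 2E1]{lercier2013fast} and used as a black box. So there is no in-paper argument to compare against.

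Your sketch is the standard one and is correct. One remark that would tighten the step you flag as ``the main technical obstacle'': rather than splitting into the four cases $(\deg f_1,\deg f_2)\in\{2g+1,2g+2\}^2$ and $\gamma=0$ versus $\gamma\neq 0$, it is cleaner to stay homogeneous throughout. With $F_i(X,Z):=Z^{2g+2}f_i(X/Z)$, the form $F_2(\alpha X+\beta Z,\gamma X+\delta Z)$ is a binary form of degree $2g+2$ whose zero locus in $\mathbb{P}^1$ is exactly $\bar\sigma^{-1}$ of the zero locus of $F_2$, i.e.\ the Weierstrass points of $C_1$; since $F_1$ is also a degree-$(2g+2)$ form with precisely these simple roots, the two differ by a nonzero scalar. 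Specializing $Z=1$ then gives $F_2(\alpha x+\beta,\gamma x+\delta)=\lambda^2 f_1(x)$ on the nose, with no case split needed. The rest of your argument (the identification of $h$ and the uniqueness check) is fine as written.
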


\textcolor{black}{
When $k = \overline{k}$ in Lemma \ref{lem:hyperelliptic matrix}, one can always take $\lambda = 1$, and $A$ is unique up to the multiplication by $(g+1)$-th roots of unity.
A generic method deduced from \textcolor{black}{Lemma \ref{lem:hyperelliptic matrix}} to decide whether two hyperelliptic curves $y^2=f_1(x)$ and $y^2=f_2(x)$ \textcolor{black}{over an algebraically closed field} are isomorphic is to compute a Gr\"{o}bner basis of a (projective zero-dimensional) homogeneous multivariate system derived from coefficients in $f_1(\alpha x + \beta z, \gamma x + \delta z) = f_2(x,z)$, see
e.g., \cite{KH18ful}.
Since the system has $4$ variables $\alpha$, $\beta$, $\gamma$, $\delta$ and since the maximal total degree $\leq 2g+2$, the complexity is estimated as $O(g^4)$ from \cite{ZeroDimGroebner}.
Alternative efficient methods in the case where $p$ does not divide $2g+2$ are proposed in \cite{lercier2013fast} with complexity $\tilde{O}(g)$, and they were implemented in Magma~\cite{Magma} as \texttt{IsIsomorphicHyperellipticCurves} (cf.\ \cite[\S 1.1]{LSR21}).}

\textcolor{black}{
Instead of the above methods, we can use {\it Igusa invariants}~\cite{Igusa60} (and absolute invariants) and {\it Shioda \it invariants}~\cite{ShiodaInvariants} in the genus-$2$ case and in the genus-$3$ case respectively.
{\it Dihedral invariants} are also useful when $\overline{\mathrm{Aut}}(C) \supset \mathbf{C}_2$ (equivalently, $\mathrm{Aut}(C) \supset V_4$), see \cite[Section 3]{GS05} for details.
{\it Homogeneous dihedral invariants} defined by Lercier–Ritzenthaler–Sijsling~\cite{lercier2016explicit} classify isomorphisms in the case where $\overline{\rm Aut}(C)$ is cyclic of order coprime to the characteristic $p$ of $k$.}


\if 0
We here briefly recall these invariants.
\begin{itemize}
\if 0
\item {\it Igusa invariants}~\cite{Igusa60}.
When $C$ is of genus $2$ with a sextic $f$, the {\it Igusa invariants} of $C$ are defined as certain symmetric functions $J_2$, $J_4$, $J_6$, $J_8$ and $J_{10}$ of the roots $a_1, \ldots , a_6$ and the leading coefficient $c_6$ of $f(x)$, e.g., $J_{10}=2^{-12}c_6^{10} \prod_{i<j}(a_i-a_j)^2$,
and the {\it absolute invariant} of $C$ (cf.\ \cite[p.\ 641]{Igusa60}) is defined as a tuple of ten quotients of the Igusa invariants.
Two genus-$2$ curves are isomorphic if and only if they have the same absolute invariants.
Since $J_2$, $J_4$, $J_6$, $J_8$ and $J_{10}$ are represented as polynomials in the coefficients of $f$ with degree bounded by a constant, the absolute invariant can be computed in constant time.
Several computer algebra systems have functions for computing the Igusa and absolute invariants, e.g., \texttt{IgusaInvariants} and \textsf{AbsoluteInvariants} in Magma.

\item {\it Shioda invariants}~\cite{ShiodaInvariants}.
In the case of genus $3$, it is known that two hyperelliptic curves are isomorphic if and only if they have the same {\it Shioda invariants} $J_2, \ldots , J_{10}$.
These $9$ invariants are introduced by Shioda in \cite{ShiodaInvariants}, and generate the algebra of invariants of binary octic forms.
The Shioda invariants are efficiently computed by using certain differential operators, called $r$-{\it transvectants}:
For two binary forms $F(x,z)$ and $G(x,z)$ of degrees $d_1$ and $d_2$, we define their $r$-transvectant as follows:
\[
(F, G)^r :=  \frac{(d_1- r)!(d_2 - r)!}{d_1!d_2!} \sum_{i=0}^r (-1)^i \binom{r}{i} \frac{\partial^r F}{\partial x^{r-i} \partial z^i}\frac{\partial^r G}{\partial x^{i} \partial z^{r-i}}.
\]
Then the Shioda invariants of $C:y^2=f(x)$ are defined as $J_2 := (f,f)^8$, $J_3 := (f,(f,f)^4)^8$ and so on.
In \cite{LR}, an efficient algorithm to compute Shioda invariants is given, and was implemented in Magma as a function named \texttt{ShiodaInvariants}.
\fi

\item {\it Dihedral invariants}~\cite{GS05}.
Assume that a hyperelliptic curve $C$ of genus $g$ has an extra involution, namely $\mathrm{Aut}(C) \supset V_4$, where $V_4$ denotes the order-$4$ non-cyclic group $\mathbb{Z}/2\mathbb{Z} \times \mathbb{Z}/2\mathbb{Z}$.
As we will see in Proposition \ref{prop:aut of order ell} below, such a $C$ is isomorphic to
\begin{equation}\label{eq:V4form}
y^2 = x^{2g+2} + a_g x^{2g} + \cdots + a_1 x^2 + 1 
\end{equation}
for some $a_i \in k$ with $1 \leq i \leq g$.
Then $u_i := a_1^{g-i+1} a_i + a_g^{g-i+1} a_{g-i+1}$ for $1 \leq i \leq g$ are called {\it dihedral invariants} of $C$.
When $\mathrm{Aut}(C)$ is just equal to $V_4$, two hyperelliptic curves are isomorphic if and only if they have the same dihedral invariants~\cite[Corollary 3.7]{GS05}.
Once an equation of the form \eqref{eq:V4form} is obtained, the dihedral invariant $(u_1, \ldots , u_g)$ is computed in $O(g)$.
\end{itemize}
\fi

\subsection{Automorphisms of hyperelliptic curves.}\label{subsec:aut}

For a curve $C$ over a field $k$, we denote by $\mathrm{Aut}_k(C)$ the automorphism group of $C$ over $k$, and if $k = \overline{k}$, \textcolor{black}{we denote it simply by} $\mathrm{Aut}(C)$.
For a hyperelliptic curve $C : y^2 = f(x)$ over \textcolor{black}{an algebraically closed field}, the quotient group $\overline{\rm Aut}(C):=\mathrm{Aut}(C) / \langle \iota_C \rangle$ is called the {\it reduced automorphism group} of $C$.
\textcolor{black}{As a particular case of Lemma \ref{lem:hyperelliptic matrix}, any $\sigma \in \mathrm{Aut}_k(C)$ is represented as in \eqref{eq:isom} for some $(A,\lambda) \in \mathrm{GL}_2(k) \times k^{\times}$.}

\if 0
For a hyperelliptic curve, Lemma \ref{lem:hyperelliptic matrix} enables us to explicitly describe $\mathrm{Aut}_k(C)$ as follows:

\begin{lemma}[{\cite[Lemma 4.1.1]{KH18ful}}]
Let $C : y^2 = f(x)$ be a hyperelliptic curve of genus $g$ over $k$, where $f(x)$ is a separable polynomial over $k$ of degree $2g+2$ or $2g+1$.
Putting $f(x,z):=z^{2g+2}f(x/z)$, we then have a group isomorphism 
\begin{equation}\label{eq:aut}
\mathrm{Aut}_k(C) \cong G_k(f) / \{ (\mu I_2, \mu^{g+1} ) : \mu \in k^{\times} \},
\end{equation}
where $G_k (f)$ is defined as in \eqref{eq:Gk}.
When $k$ is algebraically closed, we also have
\begin{equation}\label{eq:aut2}
\mathrm{Aut}(C) \cong \left\{ A \in \mathrm{GL}_2(k) :  f((x,z)\cdot {}^t A)) =  f(x,z) \right\}/ \langle\zeta_{g+1}I_2 \rangle,
\end{equation}
where $\zeta_{g+1}$ is a primitive $(g+1)$-th root of $1$, and $I_2$ is the $(2\times2)$-identity matrix.
\end{lemma}
\fi

\if 0
For a hyperelliptic curve $C : y^2 = f(x)$ over an algebraically closed field $k$, the quotient group $\overline{\rm Aut}(C):=\mathrm{Aut}(C) / \langle \iota_C \rangle$ is called the {\it reduced automorphism group} of $C$, while $\mathrm{Aut}(C)$ and $\mathrm{Aut}_K(C)$ are often called {\it full} automorphism groups.
Recall from the previous subsection that $\mathrm{Aut}(C) \cong (H_k(f)/\sim ) \times \{ \pm 1 \}$, where the hyperelliptic involution $\iota_C$ corresponds to $(I_2,-1)$.
Since $H_k(f) / \sim$ is a subgroup of $\mathrm{PGL}_2(k)$, the reduced automorphism group $\overline{\rm Aut}(C)$ is embedded into $\mathrm{PGL}_2(k)$.
\fi

\if 0
\begin{lemma}\label{lem:C/sigma hyper}
Let $C : y^2=f(x)$ be a hyperelliptic curve of genus $g$ over $k$.
If $C$ has an automorphism $\sigma(x,y)=(-x,y)$, then the quotient curve $C/\langle \sigma \rangle$ can be represented as $y^2 = f_1(x)$, where $f_1$ is the unique polynomial of degree $g+1$ such that $f_1(x^2)=f(x)$.
If $C$ has an automorphism $\tau(x,y)=(-x,-y)$, then $C/\langle \tau \rangle$ can be represented as $y^2 = f_2(x)$, where $f_2$ is the unique polynomial of degree $g+2$ such that $f_2(x^2)=x^2f(x)$.
\end{lemma}

\begin{proof}
We prove the second case. The first case can be proven by the similar way.
Since $f(x)=f(-x)$, the polynomial $g(x)$ exists. There is a morphism
\[
\begin{array}{rccc}
\phi\colon &C& \longrightarrow & y^2=g(x)\\
&(x,y)& \longmapsto & (x^2,xy).
\end{array}
\]
Since $\phi(x_0,y_0)=\phi(x_1,y_1)$ if and only if $(x_0,y_0)=(x_1,y_1)$ or $(x_0,y_0)=\sigma(x_1,y_1)$, we have $C/\langle\sigma\rangle\cong \{y^2=g(x)\}$.
\end{proof}
\fi

\textcolor{black}{Throughout the rest of this paper, let $k$ be an algebraically closed field of characteristic $p \neq 2$.}
As noted in \cite[\S 2]{GS05} (without proof), any automorphism $\sigma$ of a hyperelliptic curve over \textcolor{black}{$k$} is represented by $\mathrm{diag}(\mu,1)\in \mathrm{GL}_2(k)$ for a primitive $\ell$-th root $\mu$ of unity with $\ell = \mathrm{ord}(\sigma)$, if $\ell$ is coprime to \textcolor{black}{$p$}.
In Proposition \ref{prop:aut of order ell} below, we give a complete proof for this fact in more \textcolor{black}{precise arguments}:

\begin{proposition}\label{prop:aut of order ell}
Let $C:y^2=f_0(x)$ be a hyperelliptic curve of genus $g$ over \textcolor{black}{an algebraically closed field $k$}, and $\ell$ a positive integer coprime to $\mathrm{char}(k)$.
Let $\sigma \in \mathrm{Aut}(C)$, and assume that $\sigma$ has order $\ell$ in the reduced automorphism group of $C$.
Then there exists an isomorphism $\rho \colon C' \to C$ for a hyperelliptic curve $C':y^2 = f(x)$ such that the automorphism $\rho^{-1}  \sigma \rho$ of $C'$ is represented by $(\mathrm{diag}(\mu,1),\mu')\in \mathrm{GL}_2(k) \times k^{\times}$, where $\mu$ is a primitive $\ell$-th root of unity, and where $\mu'$ is an element satisfying $(\mu')^{\ell} = 1$ or $-1$.
We also have $\mu' = \pm \mu^{g+1}$ if $\mathrm{deg}(f) = 2g+2$, and $\mu' =\pm \sqrt{\mu^{2g+1}}$ if $\mathrm{deg}(f) = 2g+1$. 
Hence, $\sigma$ is the hyperelliptic involution if and only if $(\mu,\mu')=(1,-1)$ and $\ell=1$.
\end{proposition}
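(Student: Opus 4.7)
The plan is to apply Lemma \ref{lem:hyperelliptic matrix} to write $\sigma$ as a pair $(A,\lambda) \in \mathrm{GL}_2(k)\times k^{\times}$, then diagonalize $A$ by a change of coordinates on the hyperelliptic model, and finally read off $\mu$ and $\mu'$. First I would verify, by direct computation from the formula in Lemma \ref{lem:hyperelliptic matrix}, that composition obeys the simple rule $(A_1,\lambda_1)(A_2,\lambda_2) = (A_1 A_2,\lambda_1\lambda_2)$ (up to the given equivalence). Since the image of $A$ in $\mathrm{PGL}_2(k)$ has order exactly $\ell$, I get $A^{\ell} = c\, I_2$ for some $c \in k^{\times}$, and hence $\sigma^{\ell} = (c I_2, \lambda^{\ell})$. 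Because $\sigma^{\ell}$ lies in $\langle \iota_C \rangle$, after applying the equivalence $(\mu I_2,\mu^{g+1})\sim (I_2,1)$ this forces $\lambda^{\ell} = c^{g+1}\epsilon$ for some $\epsilon \in \{\pm 1\}$.

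The key technical step is the diagonalization of $A$. Since $\ell$ is coprime to $\mathrm{char}(k)$, the polynomial $X^{\ell}-c$ is separable, so the minimal polynomial of $A$ (which divides $X^{\ell}-c$) has distinct roots and $A$ is diagonalizable. Write $A = M D M^{-1}$ with $D = \mathrm{diag}(\mu_1,\mu_2)$ and $\mu_1^{\ell}=\mu_2^{\ell}=c$. Taking $\rho\colon C'\to C$ to be the hyperelliptic isomorphism associated to $M$ via Lemma \ref{lem:hyperelliptic matrix}, the new curve $C'\colon y^2=f(x)$ is obtained by pulling back the equation of $C$ along $M$, and the composition rule gives $\rho^{-1}\sigma\rho = (D,\lambda)$. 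Applying the equivalence with $t = \mu_2^{-1}$ normalizes $D$ to $\mathrm{diag}(\mu,1)$ with $\mu := \mu_1/\mu_2$ and replaces $\lambda$ by $\mu' := \mu_2^{-(g+1)}\lambda$. The assumption on the order of $\sigma$ in the reduced group forces $\mu$ to be a primitive $\ell$-th root of unity, and the computation above yields $(\mu')^{\ell} = \mu_2^{-\ell(g+1)}\lambda^{\ell} = c^{-(g+1)}\cdot c^{g+1}\epsilon = \pm 1$.

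To finish, I would extract the relation between $\mu$ and $\mu'$ from the requirement that $(x,y)\mapsto (\mu x,\mu' y)$ preserves $y^2=f(x)$, which reads $f(\mu x) = (\mu')^{2}f(x)$. Comparing the leading coefficient of $f$ gives $\mu' = \pm \mu^{g+1}$ when $\deg f = 2g+2$, and $\mu' = \pm \sqrt{\mu^{2g+1}}$ when $\deg f = 2g+1$. The last assertion is then immediate: if $\ell=1$, $\mu=1$, $\mu'=-1$ then $(x,y)\mapsto (x,-y)$, which is $\iota_{C'}$ (and conjugate to $\iota_C$); conversely $\iota_C$ has order $1$ in the reduced group so $\ell=1$ and $\mu=1$, while $\iota_C\neq\mathrm{id}$ forces $\mu'=-1$.

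The main obstacle I expect is bookkeeping rather than a deep difficulty: one has to track the $\lambda$-component consistently through both the conjugation by $\rho$ and the scalar equivalence, and check that the change of coordinates by $M$ does produce a valid equation of the form $y^2=f(x)$ (with $f$ of degree $2g+1$ or $2g+2$ depending on whether a root moved to infinity), so that the dichotomy in the statement makes sense. The diagonalization step itself is clean precisely because $\ell$ is coprime to $\mathrm{char}(k)$, which is the essential hypothesis.
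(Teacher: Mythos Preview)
Your proposal is correct and follows essentially the same approach as the paper's proof: represent $\sigma$ by a pair $(A,\lambda)$ via Lemma~\ref{lem:hyperelliptic matrix}, use that $A^\ell$ is scalar to diagonalize $A$ by a matrix $M$ (the paper's $Q$), define $\rho$ accordingly, normalize the diagonal matrix to $\mathrm{diag}(\mu,1)$ via the scalar equivalence, and read off $(\mu')^\ell=\pm1$ and $\mu^{\deg f}=(\mu')^2$ from the invariance of the equation. The only cosmetic difference is that the paper first normalizes $\lambda=1$ before diagonalizing, whereas you carry $\lambda$ through and cancel at the end; your careful remark about checking that the pulled-back equation has $\deg f\in\{2g+1,2g+2\}$ is a point the paper leaves implicit.
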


\begin{proof}
Recall from Lemma \ref{lem:hyperelliptic matrix} that $\sigma$ is represented by some $(A,\lambda) \in \mathrm{GL}_2(k) \times k^{\times}$.
We may assume $\lambda = 1$ by $(A,\lambda) \equiv (\lambda^{-1/(g+1)}A, 1)$.
Since $A^{\ell} = a I_2$ and $\lambda^{\ell} = \pm a^{g+1}$ for some $a \in k^{\times}$, there exists a matrix $Q = \alpha E_{11} + \beta E_{12} + \gamma E_{21} + \delta E_{22} \in \mathrm{GL}_2(k)$ with $Q^{-1}AQ = B_0:=b^{-1}\mathrm{diag}(\mu_1, \mu_2)$ in $\mathrm{GL}_2(k)$, where $b \in k$ is an arbitrary element with $b^{\ell}=a^{-1}$, and where $\mu_1$ and $\mu_2$ are $\ell$th roots of $1$.
Letting $\rho$ be an isomorphism represented by $(Q, \nu)$ with an arbitrary $\nu \in k^{\times}$, we have the following commutative diagram:
\[
\begin{CD}
C @>{\sigma}>> C \\
@A{\rho}AA @VV{\rho^{-1}}V   \\
C' @>>> C' ,
\end{CD}
\]
where $C'$ is a hyperelliptic curve defined by $\left(\frac{\nu y}{(\gamma x + \delta)^{g+1} }\right)^2 = f_0\left( \frac{\alpha x + \beta}{\gamma x + \delta} \right)$ with an automorphism $\rho^{-1} \sigma \rho$ (of order $\ell$ in the reduced automorphism group $\overline{\mathrm{Aut}}(C')$) represented by $(B_0 , \lambda) $.
Putting $B := \mathrm{diag}(\mu,1)$ with $\mu:=\mu_1\mu_2^{-1}$ and $\mu' = (b \mu_2^{-1})^{g+1} \lambda$, we furthermore have $(B_0 , \lambda) \equiv ( (b \mu_2^{-1}) B_0, (b \mu_2^{-1})^{g+1} \lambda) = (B, \mu')$.
Note that $\mu$ is an $\ell$-th root of unity, and that $(\mu')^{\ell} = ((b \mu_2^{-1})^{g+1} \lambda)^{\ell} = (a^{-1})^{g+1} \lambda^{\ell} = \pm 1$.

We also show the assertion on $\mu'$.
For this, let $y^2 = f(x)$ be a hyperelliptic equation of $C'$, and put $F(X,Z) := Z^{2g+2}f(X/Z)$.
Since $\rho^{-1} \sigma \rho$ is an automorphism of $C'$, it follows from $F(\mu X, Z) = (\mu')^2 F(X,Z)$ that $f(\mu x) = (\mu')^2 f(x)$.
Comparing the $\mathrm{deg}(f)$-th coefficients in both sides, we have $\mu^{\mathrm{deg}(f)} = (\mu')^2$, as desired.
Furthermore $\mu$ is a primitive $\ell$-th root of unity since the order of $\sigma$ as an element of the reduced automorphism group of $C$ is $\ell$.
\end{proof}

\begin{remark}
In Proposition \ref{prop:aut of order ell}, the degree of $f$ is always equal to $2g+2$ if $\sigma$ is an involution (i.e., order $\ell=2$) as an element of the (not reduced) automorphism group $\mathrm{Aut}(C)$.
Indeed, we have $\mu = -1$ and $(\mu')^{\ell}=1$ in this case, and it follows from $\mu^{\deg (f)} = (\mu')^2$ that $\mathrm{deg}(f)$ must be even.
\end{remark}

For any subgroup $G$ of $\mathrm{Aut}(C)$, we can consider the {\it quotient curve} $C/G$.
Finding an equation for $C/G$ is not so easy in general, but it is possible under some reasonable assumptions on $C$ and $G$ such as in the case of the following lemma:

\begin{lemma}\label{lem:C/sigma hyper}
\textcolor{black}{
Let $C$ be a hyperelliptic curve $y^2 = f(x^2)$ of genus $g$ over $k$, where $f(x)$ is a square-free polynomial in $k[x]$ of degree $g+1$ with $f(0) \neq 0$.
Let $\iota_C$ be the hyperelliptic involution $(x,y) \mapsto (x,-y)$, and let $\sigma_1 : (x,y) \mapsto (-x,y)$ be an involution on $C$.
Then the quotient curves $C/\langle \sigma_1 \rangle$ and $C / \langle \sigma_2 \rangle$ with $\sigma_2 := \sigma_1 \circ \iota_C$ are hyperelliptic curves given by $y^2 = f (x)$ and $y^2 = xf(x)$, respectively.}

\textcolor{black}{
Conversely, let $C_1$ and $C_2$ be hyperelliptic curves of genera $g_1$ and $g_1$ (resp.\ $g_1$ and $g_1 + 1$) over $k$ respectively defined by $y^2 = f (x)$ and $y^2 = xf(x)$, where $f(x)$ is a square free polynomial of degree $2 g_1 + 1$ (resp.\ $2 g_1 + 2$) with $f(0) \neq 0$.
Then the normalization of the fiber product $C_1 \times_{\mathbb{P}^1} C_2$ is isomorphic to the hyperelliptic curve $C : y^2 = f(x^2)$ of genus $2 g_1$ (resp.\ $2g_1 + 1$).}
\end{lemma}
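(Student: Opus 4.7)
The plan is to prove both directions at the level of function fields. For the forward direction, set $K := k(x,y)$ subject to $y^2 = f(x^2)$, the function field of $C$. On $K$, the involution $\sigma_1$ fixes $u := x^2$ and $y$, which satisfy $y^2 = f(u)$, and $K = k(u,y)[x]/(x^2 - u)$ is a degree-$2$ extension on which $\sigma_1$ acts nontrivially. Hence $K^{\langle \sigma_1 \rangle} = k(u,y)$, which is the function field of the smooth (since $f$ is square-free) curve $y^2 = f(x)$, giving $C/\langle \sigma_1 \rangle$. For $\sigma_2$ the natural fixed generators are $u := x^2$ and $v := xy$, satisfying $v^2 = x^2 y^2 = u f(u)$; the same degree argument yields $K^{\langle \sigma_2 \rangle} = k(u,v)$, and $y^2 = x f(x)$ is smooth because $x f(x)$ is square-free --- here both $f$ being square-free and $f(0) \neq 0$ are used.

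For the converse direction, write down the morphism
\[
\psi \colon C \longrightarrow C_1 \times_{\mathbb{P}^1} C_2, \qquad (x, y) \longmapsto \bigl((x^2, y),\, (x^2, xy)\bigr),
\]
where the fiber product is taken with respect to the hyperelliptic projections $(x,y) \mapsto x$. Well-definedness follows from $y^2 = f(x^2)$ (placing $(x^2, y)$ on $C_1$) and $(xy)^2 = x^2 f(x^2)$ (placing $(x^2, xy)$ on $C_2$), together with the common first coordinate $x^2$. Birationality is exhibited by a rational inverse on the open locus where $y \neq 0$: a point $((u, y_1),(u, y_2))$ in the fiber product satisfies $y_2^2 = u y_1^2$, so $y_2/y_1$ is a square root of $u$, and $((u, y_1),(u, y_2)) \mapsto (y_2/y_1,\, y_1)$ inverts $\psi$ there. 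Since $C$ itself is smooth --- the polynomial $f(x^2)$ being square-free, as the roots of $f$ are nonzero and pairwise distinct --- the universal property of normalization identifies $C$ with the normalization of $C_1 \times_{\mathbb{P}^1} C_2$. Finally, $\deg f(x^2) = 2 \deg f$ gives the genus of $C$ as $2 g_1$ (resp.\ $2 g_1 + 1$), matching the claim.

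The main obstacle is bookkeeping rather than anything conceptual: one must verify carefully that $f(x^2)$ and $x f(x)$ are square-free, which is exactly what the hypotheses $f$ square-free and $f(0) \neq 0$ supply, and one must exhibit the rational inverse to $\psi$ explicitly because the fiber product $C_1 \times_{\mathbb{P}^1} C_2$ will typically be singular at the common ramification locus above the roots of $f$; it is precisely the smoothness of $C$ that realizes it as the normalization.
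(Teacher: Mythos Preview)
Your proof is correct. In the compiled paper the lemma is actually stated without proof (the authors' argument is left in a commented-out \texttt{\textbackslash if 0} block), but the approach sketched there is essentially the same as yours: for the forward direction they use the explicit degree-$2$ maps $\pi_1(x,y)=(x^2,y)$ and $\pi_2(x,y)=(x^2,xy)$ and check that the fibers are exactly the $\sigma_i$-orbits, which is the geometric counterpart of your fixed-field computation; for the converse they invoke the universal property of the fiber product and then conclude by a degree count (both $\pi_i$ and the projections $C_1\times_{\mathbb{P}^1}C_2\to C_i$ have degree $2$, so the induced map has degree $1$), whereas you instead exhibit the rational inverse $((u,y_1),(u,y_2))\mapsto(y_2/y_1,\,y_1)$ directly. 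These are equivalent arguments; your version is slightly more explicit, and your remarks on why $f(x^2)$ and $xf(x)$ are square-free (using $f(0)\neq 0$) make the smoothness verifications cleaner than the paper's sketch.
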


\if 0
\begin{proof}
Let $C_1$ and $C_2$ be the hyperelliptic curves $y^2 = f (x)$ and $y^2 = xf(x)$, respectively.
We define two degree-$2$ maps $\pi_1 : C \to C_1 \ ; \ (x,y) \mapsto (x^2, y)$ and $\pi_2 : C \to C_2 \ ; \ (x,y) \mapsto (x^2, x y)$.
For each $i=1,2$, it is straightforward that $\pi_i (x_1,y_1) = \pi_i (x_2, y_2)$ is equivalent to $\sigma_i ( x_1, y_1) = (x_2, y_2)$ or $(x_1,y_1) = (x_2,y_2)$ for any $(x_1,y_1)$ and $(x_2,y_2)$ on $C$.
Thus $C_1 \cong C / \langle \sigma_1 \rangle$ and $C_2 \cong C / \langle \sigma_2 \rangle$, as desired.
\end{proof}

We show in Lemma \ref{lem:fiber} below that the hyperelliptic curve $C$ in Lemma \ref{lem:C/sigma hyper} is recoverable from $C_1$ and $C_2$ as their fiber product over $\mathbb{P}^1$:

\begin{lemma}\label{lem:fiber}
Let $C_1$ and $C_2$ be hyperelliptic curves of genera $g_1$ and $g_1$ (resp.\ $g_1$ and $g_1 + 1$) over $k$ respectively defined by $y^2 = f (x)$ and $y^2 = xf(x)$, where $f(x)$ is a square free polynomial of degree $2 g_1 + 1$ (resp.\ $2 g_1 + 2$) with $f(0) \neq 0$.
Then the normalization of the fiber product $C_1 \times_{\mathbb{P}^1} C_2$ is isomorphic to the hyperelliptic curve $C : y^2 = f(x^2)$ of genus $2 g_1$ (resp.\ $2g_1 + 1$).
\end{lemma}

\begin{proof}
As in the proof of Lemma \ref{lem:C/sigma hyper}, we define two degree-$2$ maps $\pi_1$ and $\pi_2$ by $\pi_1 : C \to C_1 \ ; \ (x,y) \mapsto (x^2, y)$ and $\pi_2 : C \to C_2 \ ; \ (x,y) \mapsto (x^2, x y)$, and then it follows from the universality of the fiber product that we have the following commutative diagram:
\[
\xymatrix{
C \ar@(r,ul)[rrd]^{\pi_2} \ar@{.>}[rd] \ar@(d,dr)[rdd]_{\pi_1} & & \\
 & C_1 \times_{\mathbb{P}^1} C_2 \ar[r] \ar[d] & C_1 \ar[d] \\
& C_2 \ar[r] & \mathbb{P}^1.
}
\]
Since both the morphisms $\pi_i$ and $C_1 \times_{\mathbb{P}^1} C_2 \to C_i$ are of degree $2$, the induced morphism $C \to C_1 \times_{\mathbb{P}^1} C_2$ has degree one, as desired.
\end{proof}
\fi

\if 0
\begin{itemize}
    \item To test the isomorphy, it suffices to decide the (non-)existence of a matrix $M \in \mathrm{GL}_3(\overline{\mathbb{F}_{p}})$ such that $F_1^M = F_2$ with $F_1^M := F( (x,y,z) \cdot {}^t M )$, as follows:
    Regarding the entries of $M$ as indeterminates, we construct the system by comparing coefficients in $F_1^M = F_2$, and compute the Gr\"{o}bner basis of an ideal associated to the system with respect to a graded order.
    Since the maximal degree of the system and the number of variables are constants not depending on $p$, the Gr\"{o}bner basis is computed in constant time with respect to $p$.
    Note that we can also apply the covariant method proposed in \cite{4cce5983671b4303b76fa5575aa76eb3} when the discriminant of $F_1$ and $F_2$ are non-zero (this is a generic condition, and holds unless $C_1$ and $C_2$ are singular).
    \item As a particular case, one can also compute the automorphism group of a plane non-singular quartic $C=V(F)$ as a subgroup of $\mathrm{PGL}_3(\overline{\mathbb{F}_{p}})$, say $\mathrm{Aut}(C) \simeq \{ M \in \mathrm{PGL}_3(\overline{\mathbb{F}_{p}}) : F^M = F \}$.
    In this case, we obtain a zero-dimensional (by $\# \mathrm{Aut}(C) < \infty$) homogeneous system, and hence the size $\# \mathrm{Aut}(C)$ can be computed in constant time with respect to $p$, as the number of roots in a projective space of the homogeneous system.
    \item In Magma, the function \texttt{IsIsomorphicPlaneQuartics} can test $C_1 \cong C_2$ when the characteristic is zero or greater than $7$, as follows:
    It first computes the Dixmier-Ohno invariants ${\rm DO}(C_1)$ and ${\rm DO}(C_2)$ (described below) of $C_1$ and $C_2$.
    If ${\rm DO}(C_1)$ and ${\rm DO}(C_2)$ do not equal to each other, then $C_1 \not\cong C_2$.
    Otherwise, the above covariant (resp.\ Gr\"{o}bner basis) method is applied in the generic (resp.\ non-generic) case.
    To compute $\# \mathrm{Aut}(C)$, we can use the function \texttt{AutomorphismGroupOfPlaneQuartic}.
    See \cite{LSR21} for details.
    \item In characteristic $0$, we know Dixmier-Ohno invariants~\cite{elsenhans2015explicit}, \cite{ohno2005graded} as invariants of quartic plane curves.
    Let $V$ be a $15$-dimension $\mathbb{C}$-vector space of the homogeneous polynomials of degree $4$ in three variables, and $\mathcal{I}_3$ a graded subalgebra of $\mathbb{C}[V]$ such that all elements in $\mathcal{I}_3$ are invariants under the action of $\mathrm{PGL}_3(\mathbb{C})$.
    Note that elements in $\mathcal{I}_3$ correspond to invariants of plane quartics under isomorphisms.
    Dixmier proposed in \cite{dixmier1987projective} a system consisting of six homogeneous polynomials in $\mathcal{I}_3$, and Ohno completed the list of $13$ generators of $\mathcal{I}_3$ in \cite{ohno2005graded}.
    We call this list of $13$ homogeneous polynomials \textit{Dixmier-Ohno invariants}.
\end{itemize}
\fi



\subsection{Generalized Howe curves}\label{subsec:Howe}

In this subsection, we recall the notion of {\it generalized Howe curves}.
Consider two hyperelliptic curves $C_1$ and $C_2$ of genera $g_1$ and $g_2$ over an algebraically closed field $k$ with the following affine models:
\begin{equation}\label{eq:Howe}
\begin{aligned}
    C_1&: \textcolor{black}{y_1^2 = f_1(x) } = (x-a_1)\cdots(x-a_r)(x-b_1) \cdots (x-b_{2g_1+2-r}),\\
    C_2&: \textcolor{black}{y_2^2 = f_2(x)} = (x-a_1)\cdots(x-a_r)(x-c_1) \cdots (x-c_{2g_2+2-r}),
\end{aligned}
\end{equation}
where $a_i$, $b_i$, and $c_i$ are all distinct elements in $k$.
Assume that $g_1 \leq g_2$, and let $\pi_i: C_i \to {\mathbb P}^1$ be the usual double covers.
\textcolor{black}{As in \cite{Katsura-Takashima}, assume that there is no isomorphism $\varphi : C_1 \to C_2$ with $\pi_2 \circ \varphi = \pi_1$.}
As it is defined in \cite{Katsura-Takashima}, the normalization $C$ of the fiber product $C_1 \times_{{\mathbb P}^1\!} C_2$ is called a {\it generalized Howe curve}.
\textcolor{black}{Let $\sigma_i$ be an involution on $C$ whose quotient map is $C \to C_i$ for each $i=1,2$, and let $C_3$ be the quotient curve of $C$ by the other involution $\sigma_3:=\sigma_1\sigma_2$ on $C$.}
\textcolor{black}{We can write an equation for $C_1 \times_{\mathbb{P}^1}C_2$ locally as $y_1^2 - f_1(x) = y_2^2 - f_2(x) =0$, from which we obtain an equation for $C_3$ as follows:}
\[
    \textcolor{black}{C_3: y_3^2 = f_3(x) = (x-b_1) \cdots (x-b_{2g_1+2-r})(x-c_1) \cdots (x-c_{2g_2+2-r}),}
\]
\textcolor{black}{where we set $y_3 = \frac{y_1 y_2}{(x-a_1) \cdots (x-a_r)}$ as in \cite{Katsura-Takashima}.
We then have the following commutative diagram:}
\[
\textcolor{black}{
\xymatrix{
C \ar[r] \ar[d] & C_i \ar[d]^{\pi_i}\\
\mathbb{P}^1 \ar[r] & \mathbb{P}^1.
}
}
\]

We here recall the following proposition on the genus $g(C)$ of the generalized Howe curve $C$ and a criterion for $C$ to be hyperelliptic:

\begin{proposition}[{\cite[Proposition 1 and Theorem 2]{Katsura-Takashima}}]\label{Katsura-Takashima}
\textcolor{black}{Let $C_1$, $C_2$, and $C$} be as above.
Then we have $g(C) = 2g_1 + 2g_2 + 1-r$ and $r \leq g_1 + g_2 +1 $, and hence $g(C) \geq g_1 + g_2$.
Moreover, if $g(C) \geq 4$, then $C$ is hyperelliptic if and only if $r=g_1+g_2+1$, i.e., $C_3$ is rational.
\end{proposition}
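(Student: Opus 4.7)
The plan is to handle the genus formula and the bound $r \leq g_1 + g_2 + 1$ via Riemann-Hurwitz, and then to settle the hyperelliptic criterion by combining the centrality of the hyperelliptic involution with the Castelnuovo-Severi inequality.

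For the genus formula, I would apply Riemann-Hurwitz to the degree-$2$ cover $C \to C_1$, whose deck transformation is the involution $\sigma_1 : (x, y_1, y_2) \mapsto (x, y_1, -y_2)$. A local analysis at each singular point $(a_i, 0, 0)$ of the fiber product $C_1 \times_{\mathbb{P}^1} C_2$ shows that the two branches of its resolution are parametrised by $y_2 \approx \pm y_1 \sqrt{h(a_i)/g(a_i)}$ (writing $f_1 = (x-a_i)g$ and $f_2 = (x-a_i)h$ locally), so $\sigma_1$ swaps them and contributes no fixed points. Over each $c_j$, on the other hand, $C$ has two smooth points with $y_2 = 0$ and $y_1 \neq 0$, each fixed by $\sigma_1$. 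Hence $\#\mathrm{Fix}(\sigma_1) = 2(2g_2 + 2 - r)$, and Riemann-Hurwitz yields $g(C) = 2g_1 + 2g_2 + 1 - r$. The bound $r \leq g_1 + g_2 + 1$ then follows from the equation $C_3 : y^2 = f_3(x)$, which exhibits $C_3$ as a (possibly rational) hyperelliptic curve of non-negative genus $g_3 = g_1 + g_2 + 1 - r$; combining with the genus formula gives $g(C) = g_1 + g_2 + g_3 \geq g_1 + g_2$.

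For the hyperelliptic criterion, the direction $(\Leftarrow)$ is immediate: if $r = g_1 + g_2 + 1$, then $g_3 = 0$ and $C_3 \cong \mathbb{P}^1$, so the degree-$2$ cover $C \to C_3$ exhibits $C$ as hyperelliptic. For the converse, suppose $C$ is hyperelliptic of genus $\geq 4$ with hyperelliptic involution $\iota_C$. Since $\iota_C$ is central in $\mathrm{Aut}(C)$, it commutes with $V_4 := \langle \sigma_1, \sigma_2 \rangle$. If $\iota_C \in V_4$, then $\iota_C = \sigma_i$ for some $i$, forcing $C_i \cong \mathbb{P}^1$; under the standing assumption $g_1, g_2 \geq 1$ for the Howe setup, we must have $i = 3$, whence $g_3 = 0$ and $r = g_1 + g_2 + 1$.

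The main obstacle is to rule out the remaining case $\iota_C \notin V_4$. My plan here is to invoke the Castelnuovo-Severi inequality on the three pairs of degree-$2$ covers $\bigl( C \to C/\langle \iota_C \rangle \cong \mathbb{P}^1,\ C \to C_i = C/\langle \sigma_i \rangle \bigr)$ for $i = 1, 2, 3$. The hypothesis $\iota_C \notin V_4$ forces $\iota_C \neq \sigma_i$ for every $i$, so the two corresponding degree-$2$ subfields generate $k(C)$, and Castelnuovo-Severi gives $g(C) \leq 2 g_i + 1$ for each $i \in \{1, 2, 3\}$. Summing,
\[
3\, g(C) \;\leq\; 2(g_1 + g_2 + g_3) + 3 \;=\; 2\, g(C) + 3,
\]
forcing $g(C) \leq 3$ and contradicting $g(C) \geq 4$. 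This forces $\iota_C \in V_4$, and the preceding case analysis concludes the proof.
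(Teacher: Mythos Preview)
The paper does not supply its own proof of this proposition; it is quoted directly from \cite{Katsura-Takashima} (their Proposition~1 and Theorem~2). Your argument is correct. The genus computation via Riemann--Hurwitz for $C\to C_1$ and the local analysis at the nodes $(a_i,0,0)$ are carried out properly, and the bound $r\le g_1+g_2+1$ follows as you say from $g_3\ge 0$. For the hyperelliptic criterion, your use of Castelnuovo--Severi on the three pairs $(C\to\mathbb{P}^1,\ C\to C_i)$ to exclude $\iota_C\notin V_4$ is a clean and standard device; the compositum condition is indeed satisfied whenever $\iota_C\neq\sigma_i$, since two distinct index-$2$ subfields of $k(C)$ necessarily generate $k(C)$. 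The implicit hypothesis $g_1,g_2\ge 1$ that you invoke in the case $\iota_C\in V_4$ is part of the setup (the $C_i$ are hyperelliptic curves).
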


As a particular case, the generalized Howe curve $C$ with $g_1=g_2=1$ and $r=1$ is a non-hyperelliptic curve of genus $g(C)=4$, and it is called simply a {\it Howe curve}, which was originally defined in \cite{Howe} (see also \cite{KHH} and \cite{KHS}).
On the other hand, in the case where $g(C) = 3$ with $g_1 = g_2 = 1$ and $r=2$, all the quotient curves $C_1$, $C_2$, and $C_3$ are of genus one, and this case was studied in \cite{Bouw}, \cite{Ohashi2}, \cite{Ohashi1}, and \cite{oort1991hyperelliptic}.
In particular, Oort~\cite{oort1991hyperelliptic} used this construction to prove the existence of superspecial curves of genus $3$, which we will introduce in Section \ref{sec:ssp} \textcolor{black}{below}.
As we will also recall at the beginning of Section \ref{sec:main2}, Oort also gave a criterion for $C$ of genus $3$ to be hyperelliptic (cf.\ Proposition \ref{Katsura-Takashima} for the case of $g \geq 4$).
Note that this $C$ of genus $3$ is called a {\it Ciani curve} if it is non-hyperelliptic, see \cite{Ciani}.

As it is described in \cite[\S 4]{Katsura-Takashima}, the hyperelliptic involutions of $C_1$ and $C_2$ lift to order-$2$ automorphisms on $C_1 \times_{\mathbb{P}^1} C_2$, and thus the automorphism group of $C$ contains a subgroup isomorphic to $V_4 := \mathbb{Z}/2\mathbb{Z} \times \mathbb{Z}/2\mathbb{Z}$.
By combining Proposition \ref{prop:aut of order ell} and Lemmas \ref{lem:C/sigma hyper}, we here prove the converse for the hyperelliptic case:

\begin{lemma}[cf.\ {\cite[Remark 2]{Katsura-Takashima}}]\label{lem:V4}
Let $C$ be a hyperelliptic curve of genus $g$ over $k$.
Then $C$ is a generalized Howe curve if and only if $\mathrm{Aut}(C)$ contains a subgroup isomorphic to $V_4$.
In this case, $C$ is birational to $C_1 \times_{\mathbb{P}^1} C_2$ for hyperelliptic curves $C_1$ and $C_2$ of genera $g_1$ and $g_2$ as in Lemma \ref{lem:C/sigma hyper}, where $g_1 = g_2$ (resp.\ $g_2 = g_1 + 1$) if $g$ is even (resp.\ odd).
\end{lemma}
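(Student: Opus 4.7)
The proof has two directions. For the ``only if'' direction, recall from the passage preceding the lemma that for a generalized Howe curve $C$ obtained as the normalization of $C_1 \times_{\mathbb{P}^1} C_2$, the three commuting involutions $\sigma_1$, $\sigma_2$, $\sigma_3 = \sigma_1 \sigma_2$ (whose quotient maps yield $C_1$, $C_2$, $C_3$, respectively) are pairwise distinct nontrivial elements of $\mathrm{Aut}(C)$. Hence $\{1, \sigma_1, \sigma_2, \sigma_1 \sigma_2\} \cong V_4$ sits inside $\mathrm{Aut}(C)$, as required.

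For the converse, suppose $V_4 \subseteq \mathrm{Aut}(C)$. Since $V_4$ has three involutions while at most one can equal the hyperelliptic involution $\iota_C$, we may choose an extra involution $\sigma \in V_4$. Applying Proposition \ref{prop:aut of order ell} with $\ell = 2$, after replacing $C$ by an isomorphic curve we may assume $C$ is given by $y^2 = f(x)$ and $\sigma$ is represented by $(\mathrm{diag}(-1, 1), \mu')$ with $\mu' = \pm 1$; by the Remark following that proposition, $\deg f = 2g+2$. If $\mu' = -1$, we replace $\sigma$ by $\sigma \circ \iota_C \in \mathrm{Aut}(C)$, which is then represented by $(\mathrm{diag}(-1,1), 1)$. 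In either case we obtain $\tau \in \mathrm{Aut}(C)$ of the form $(x, y) \mapsto (-x, y)$, which forces $f(-x) = f(x)$, equivalently $f(x) = h(x^2)$ for some $h \in k[x]$ of degree $g+1$.

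A key checkpoint is verifying that $h$ is square-free and $h(0) \neq 0$, so as to meet the hypotheses of Lemma \ref{lem:C/sigma hyper}. This follows directly from the square-freeness of $f$: if $h$ had a repeated root $r \neq 0$, then $(x^2 - r)^2 = ((x - \sqrt{r})(x + \sqrt{r}))^2$ would divide $f$, contradicting that $f$ is square-free; and if $h(0) = 0$, then $x^2$ would divide $f(x^2) = f(x)$, again a contradiction. Lemma \ref{lem:C/sigma hyper} then yields hyperelliptic curves $C_1\colon y^2 = h(x)$ and $C_2\colon y^2 = x h(x)$ such that $C$ is isomorphic to the normalization of $C_1 \times_{\mathbb{P}^1} C_2$. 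A direct genus computation gives $g(C_1) = \lfloor g/2 \rfloor$ and $g(C_2) = \lfloor (g+1)/2 \rfloor$, matching the stated relation $g_1 = g_2$ when $g$ is even and $g_2 = g_1 + 1$ when $g$ is odd.

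The main conceptual point, rather than a computational obstacle, is the bookkeeping in the case $\iota_C \notin V_4$ (so that all three non-identity elements of $V_4$ are extra involutions): one still normalizes by selecting any such extra involution and, if necessary, composing with $\iota_C$ to land precisely in the form $(x,y) \mapsto (-x, y)$ required by Lemma \ref{lem:C/sigma hyper}. This replacement is always legitimate because $\iota_C$ belongs to $\mathrm{Aut}(C)$ by definition, independently of whether it lies in the chosen $V_4$.
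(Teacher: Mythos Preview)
Your proof is correct and follows essentially the same approach as the paper's: both directions are handled identically (the ``only if'' via the involutions coming from the fiber-product structure, the ``if'' by normalizing an extra involution via Proposition~\ref{prop:aut of order ell} and then invoking Lemma~\ref{lem:C/sigma hyper}). You supply a bit more detail than the paper does---the explicit verification that $h$ is square-free with $h(0)\neq 0$, the case split on $\mu'=\pm 1$, and the genus computation---but the logical skeleton is the same.
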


\begin{proof}
The ``only if''-part was proved in \cite[\S 4]{Katsura-Takashima}, and thus we here prove the ``if''-part only.
Assume that $\mathrm{Aut}(C)$ contains a subgroup isomorphic to $V_4$.
In this case, $C$ has a non-hyperelliptic involution, and thus we may suppose by Proposition \ref{prop:aut of order ell} that $C$ is given by $ y^2 = f (x^2)$ for some square-free $f \in k[x]$ of degree $g+1$, with automorphisms $\sigma_1 :(x,y) \mapsto (-x,y)$ and $\sigma_2 : (x,y) \mapsto (-x,-y)$, where $\sigma_2 = \sigma_1 \circ \iota_{C}$.
We then obtain two hyperelliptic curves $C_1 := C / \langle \sigma_1 \rangle$ and $C_2:= C / \langle \sigma_2 \rangle$, which are defined respectively by $y^2 = f(x)$ and $y^2 = xf(x)$ from \textcolor{black}{the first half part of} Lemma \ref{lem:C/sigma hyper}.
By \textcolor{black}{the second half part of} Lemma \ref{lem:C/sigma hyper}, the fiber product $C_1 \times_{\mathbb{P}^1} C_2$ is birational to $C$.
\end{proof}

Note that an argument similar to Lemma \ref{lem:V4} is shown in \cite[Theorem 5]{JenniferPaulhus2008}, where the characteristic of $k$ is assumed to be $0$.

\subsection{\textcolor{black}{Decomposed Richelot isogenies.}}\label{subsec:Jacobian}

\textcolor{black}{Let $A$ be an abelian variety and $\hat{A}$ be its dual.
For an ample divisor $D$ on $A$, there is an isogeny $\Phi_D\colon A \to \textcolor{black}{\hat{A}}$ defined by $x \mapsto T^*_x(L)-L$, where $T_x\colon A \to A$ is a translation map $a \mapsto a+x$. 
If $\Phi_D$ is an isomorphism, we call $D$ a {\it principal polarization} on $A$, and a pair $(A,D)$ a {\it principally polarized abelian variety}.
We define a product of polarized abelian varieties as
\[
\prod_{i=1}^n(A_i,D_i)= \left(A_1\times \cdots \times A_n, \sum_{i=1}^n(A_1\times \cdots \times A_{i-1}\times D_i\times A_{i+1}\times \cdots \times A_n)\right).
\]
Let $C$ be a smooth projective curve over $k$ of genus $g\geq 1$\textcolor{black}{, and let $J(C)$ be a Jacobian variety of $C$.}
For a divisor $D$ on $C$ of degree $1$, there is a natural map $\alpha_C \colon C \hookrightarrow J(C)$ defined as $P \mapsto (P)-D$.
The theta divisor of $J(C)$ is defined as a sum of $g-1$ copies of $\alpha_C(C)$, say $\Theta_{C}:= \alpha_C(C) + \cdots +  \alpha_C(C)$.
It is known that $(J(C),\Theta_{C})$ is a principally polarized abelian variety.
If we have $J(C)\cong E^g$ for some supersingular elliptic curve $E$, we call $C$ a {\it supersupecial} (s.sp.\ for short) curve.
Recall from \cite[Theorem 3.5]{shioda1979supersingular} that for supersingular elliptic curves $E_1,\ldots, E_{2g}$ with $g \geq 2$, we have $E_1\times \cdots \times E_g \cong E_{g+1}\times \cdots \times E_{2g}$.}

Let $(A,D)$ be a principally polarized abelian variety over \textcolor{black}{an algebraically closed field $k$}, and let $C$ be a curve over $k$.
If an isogeny $\phi\colon J(C)\to A$ satisfies $\phi^*(\Theta_C)\approx 2D$, we call $\phi$ a \textit{Richelot isogeny}, where $\approx$ is an algebraic equivalence. We call $\phi$ a {\it decomposed Richelot isogeny} if there exists two \textcolor{black}{principally polarized} abelian varieties $(A_1,D_1)$ and $(A_2,D_2)$ such that $(A,D)\cong (A_1,D_1)\times (A_2,D_2)$.
If there are elliptic curves $(E_1,O_{E_1}),\ldots ,(E_g,O_{E_g})$ satisfying $(A,D)\cong \prod_{i=1}^g(E_i,O_{E_i})$, we call $\phi$ a {\it completely decomposed Richelot isogeny}.

In the following, we recall some results in \cite{katsura2021decomposed} and \cite{Katsura-Takashima} on decomposed Richelot isogenies.
Katsura-Takashima showed in \cite{Katsura-Takashima} that any generalized Howe curve always has a decomposed Richelot isogeny:

\begin{theorem}[{\cite[Theorem 3]{Katsura-Takashima}}]\label{thm:KTmain3}
Let $C$ be a generalized Howe curve associated with hyperelliptic curves $C_1$ and $C_2$, and let $\sigma$ and $\tau$ be the automorphisms on $C$ of order $2$ obtained by lifting the hyperelliptic involutions $\iota_{C_1}$ and $\iota_{C_2}$ \textcolor{black}{respectively}, say $C_1 \cong C/\langle \sigma \rangle$ and $C_2 \cong C/\langle \tau \rangle$.
We also let $C_3 := C / \langle \sigma \circ \tau \rangle$.
Then, there exists a decomposed Richelot isogeny $J(C) \to J(C_1) \times J(C_2) \times J(C_3)$.
\end{theorem}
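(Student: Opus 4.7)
The plan is to construct $\phi$ from the three degree-$2$ quotient maps and then verify the Richelot condition via a Rosati-duality computation. Set $\phi_i\colon J(C)\to J(C_i)$ to be the Albanese (norm) map induced by the double cover $\pi_i\colon C\to C_i$ for $i=1,2,3$, and let $\phi:=(\phi_1,\phi_2,\phi_3)$.

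First I would check dimensions. By the ramification data in \eqref{eq:Howe} and a direct Hurwitz computation compatible with Lemma \ref{lem:C/sigma hyper}, the third quotient $C_3$ is the hyperelliptic curve $y^2=\prod_j(x-b_j)\prod_j(x-c_j)$ of genus $g_3=g_1+g_2+1-r$. Proposition \ref{Katsura-Takashima} then gives $g(C)=2g_1+2g_2+1-r=g_1+g_2+g_3$, so the source and target of $\phi$ have equal dimension; hence it suffices to establish that $\phi$ has finite kernel and pulls back the product polarization to $2\Theta_C$.

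The heart of the argument is the endomorphism identity $\sum_{i=1}^{3}\pi_i^{*}\circ\phi_i=[2]_{J(C)}$. Each $\pi_i$ is a double cover with covering involution $\rho_i\in\{\sigma,\tau,\sigma\tau\}$, so the standard formula for double covers gives $\pi_i^{*}\circ\phi_i=\mathrm{id}+\rho_i^{*}$ on $J(C)$. Summing the three identities,
\begin{equation*}
\sum_{i=1}^{3}\pi_i^{*}\circ\phi_i \;=\; 3\cdot\mathrm{id}+\sigma^{*}+\tau^{*}+(\sigma\tau)^{*} \;=\; [2]_{J(C)}+\bigl(\mathrm{id}+\sigma^{*}+\tau^{*}+(\sigma\tau)^{*}\bigr),
\end{equation*}
and the bracketed term is the $V_4$-trace endomorphism, whose image is contained in the connected component of $J(C)^{V_4}$. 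The latter coincides with $\pi^{*}J(C/V_4)$ for the quotient $\pi\colon C\to C/V_4$, and since $C/V_4\cong C_1/\langle\iota_{C_1}\rangle\cong\mathbb{P}^{1}$ we have $J(C/V_4)=0$. The trace endomorphism therefore has image a connected abelian subvariety of a finite group, so it vanishes identically and the claimed identity follows.

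Unwinding, $\sum_i\pi_i^{*}\circ\phi_i=[2]$ is exactly the statement that the Rosati dual $\phi^{\vee}=\sum_i\pi_i^{*}\colon\prod_i J(C_i)\to J(C)$ of $\phi$ satisfies $\phi^{\vee}\circ\phi=[2]_{J(C)}$, which in divisor-theoretic terms (via the principal polarizations) is equivalent to $\phi^{*}(\Theta_{C_1}\boxplus\Theta_{C_2}\boxplus\Theta_{C_3})\approx 2\Theta_C$, i.e.\ the Richelot condition; simultaneously $\ker\phi\subseteq J(C)[2]$ is finite, so $\phi$ is an isogeny, and the target is a product of principally polarized abelian varieties by construction, so $\phi$ is decomposed. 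The main obstacle, in my view, is justifying rigorously that the $V_4$-trace endomorphism vanishes on $J(C)$ as an honest endomorphism (not merely up to torsion); this is the only non-formal step, and it rests on the classical identification of the image of a group-trace on $J(C)$ with $\pi^{*}J(C/G)$, together with the geometric input $C/V_4\cong\mathbb{P}^1$.
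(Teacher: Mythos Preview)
The paper does not give its own proof of this theorem; it is stated with a citation to \cite[Theorem~3]{Katsura-Takashima} and then used as input for the subsequent corollaries. There is therefore nothing in the paper to compare your argument against.

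That said, your proof is correct and is essentially the standard argument (and, as far as one can infer, the one Katsura--Takashima use). A couple of remarks:

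\emph{On the step you flag as the obstacle.} The vanishing of the $V_4$-trace is not merely ``up to torsion'': the trace $\sum_{g\in V_4} g^*$ is a homomorphism of abelian varieties, so its image is a connected abelian subvariety of $J(C)$ containing $0$. It is $V_4$-invariant, hence lies in the identity component of $J(C)^{V_4}$, which equals $\pi^*J(C/V_4)$. Since $C\to\mathbb{P}^1$ is the degree-$4$ map obtained from the fiber-product construction and $\langle\sigma,\tau\rangle=V_4$ acts transitively on its generic fibers, $C/V_4\cong\mathbb{P}^1$ and the trace vanishes identically. So this step is in fact entirely formal.

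\emph{On the polarization condition.} Your passage from $\phi^\vee\circ\phi=[2]$ to $\phi^*\bigl(\Theta_{C_1}\boxplus\Theta_{C_2}\boxplus\Theta_{C_3}\bigr)\approx 2\Theta_C$ is correct but could use one line: writing $D$ for the product polarization, the Rosati adjoint satisfies $\phi^\vee=\Phi_{\Theta_C}^{-1}\circ\hat\phi\circ\Phi_D$, so $\phi^\vee\circ\phi=[2]$ rearranges to $\hat\phi\circ\Phi_D\circ\phi=\Phi_{2\Theta_C}$, and the left side is $\Phi_{\phi^*D}$ by the standard pullback formula.
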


Since a hyperelliptic curve with an extra involution is a generalized Howe curve by Lemma \ref{lem:V4}, it follows from Theorem \ref{thm:KTmain3} that we have the following corollary:

\begin{corollary}[{\cite[Theorem 1]{Katsura-Takashima}}]\label{cor:KTmain1}
Let $C$ be a hyperelliptic curve with an automorphism $\sigma$ of order $2$, which is not the hyperelliptic involution $\iota_C$.
Then, there exists a decomposed Richelot isogeny $J(C) \to J(C / \langle {\sigma} \rangle) \times J(C / \langle {\sigma} \circ \iota_C \rangle)$.
\end{corollary}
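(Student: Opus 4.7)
The plan is to derive Corollary \ref{cor:KTmain1} as a direct consequence of Theorem \ref{thm:KTmain3}, using Lemma \ref{lem:V4} to realize the hyperelliptic curve $C$ with the extra involution $\sigma$ as a generalized Howe curve. The essential observation is that the subgroup $\langle \sigma, \iota_C \rangle \subset \mathrm{Aut}(C)$ is isomorphic to $V_4$ (since $\sigma$ has order $2$, $\sigma \neq \iota_C$, and $\sigma$ commutes with $\iota_C$ as $\iota_C$ lies in the center of $\mathrm{Aut}(C)$).

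First, I would invoke Lemma \ref{lem:V4}: the presence of the $V_4$-subgroup implies that $C$ is (birational to) a generalized Howe curve associated with the two hyperelliptic quotients $C_1 := C/\langle \sigma \rangle$ and $C_2 := C/\langle \sigma \circ \iota_C \rangle$ described in Lemma \ref{lem:C/sigma hyper}. Indeed, after normalizing $C$ into the form $y^2 = f(x^2)$ via Proposition \ref{prop:aut of order ell}, the involution $\sigma$ becomes $(x,y)\mapsto(-x,y)$ and $\sigma \circ \iota_C$ becomes $(x,y)\mapsto(-x,-y)$, and the two quotient curves match the Howe construction with the correct degree count.

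Next, I would apply Theorem \ref{thm:KTmain3} to this generalized Howe curve: it furnishes a decomposed Richelot isogeny
\[
J(C) \longrightarrow J(C_1) \times J(C_2) \times J(C_3),
\]
where $C_3 = C/\langle \sigma \circ (\sigma \circ \iota_C)\rangle$. The final (and only) step is to observe that $\sigma \circ (\sigma \circ \iota_C) = \sigma^2 \circ \iota_C = \iota_C$, so $C_3 = C/\langle \iota_C \rangle \cong \mathbb{P}^1$. Hence $J(C_3)$ is trivial, and the third factor drops out, yielding the asserted decomposed Richelot isogeny $J(C) \to J(C/\langle \sigma \rangle) \times J(C/\langle \sigma \circ \iota_C \rangle)$.

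There is essentially no hard step here; the whole content of the corollary sits in (i) recognising that an extra involution $\sigma$ together with $\iota_C$ generates a copy of $V_4$ inside $\mathrm{Aut}(C)$, and (ii) the fact that the third quotient collapses to $\mathbb{P}^1$ precisely because $\sigma$ and $\iota_C$ are commuting order-$2$ elements. The only point deserving care is to verify that the two involutions $\sigma$ and $\sigma\circ\iota_C$ in the statement really correspond to the lifts of the hyperelliptic involutions of $C_1$ and $C_2$ appearing in Theorem \ref{thm:KTmain3}; this is immediate from the explicit affine models given by Lemma \ref{lem:C/sigma hyper}.
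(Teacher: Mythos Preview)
Your proof is correct and follows essentially the same route as the paper: the text preceding the corollary says precisely that a hyperelliptic curve with an extra involution is a generalized Howe curve by Lemma~\ref{lem:V4}, and then Theorem~\ref{thm:KTmain3} is invoked. Your write-up simply fills in the implicit details (that $\langle \sigma,\iota_C\rangle\cong V_4$, and that the third factor $C_3=C/\langle \iota_C\rangle\cong\mathbb{P}^1$ has trivial Jacobian), which is exactly what the paper intends.
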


\begin{remark}
It follows from Lemma \ref{lem:V4} that we can take $C_1 := C / \langle {\sigma} \rangle$ and $C_2:=C / \langle {\sigma} \circ \iota_C \rangle$ in Corollary \ref{cor:KTmain1} to be hyperelliptic curves of genera $g_1$ and $g_2$ with $g_1 = g_2 = g/2$ (resp.\ $g_1 =(g-1)/2$ and $g_2 = (g+1)/2$) if $g$ is even (resp.\ odd).
Such a decomposition of $J(C)$ is already given in \cite[Theorem 5]{JenniferPaulhus2008} by Paulhus based on Kani-Rosen's decomposition method~\cite[Theorem B]{Kani1989IdempotentRA}, although she did not prove an isogeny providing the decomposition is a Richelot one. 
\end{remark}

In the case of genus $3$, Katsura~\cite{katsura2021decomposed} proved \textcolor{black}{stronger} arguments than Theorem \ref{thm:KTmain3} and Corollary \ref{cor:KTmain1}.
More precisely, he proved in \cite[\S 4]{katsura2021decomposed} that the converse of Corollary \ref{cor:KTmain1} also holds for genus-$3$ hyperelliptic curves:

\begin{theorem}\label{thm:decomposed Richelot}
Let $C$ be a hyperelliptic curve of genus $3$, and $\iota_C$ its hyperelliptic involution.
Then, $C$ has an extra involution $\sigma$ if and only if there exists a decomposed Richelot isogeny outgoing from $(J(C),\Theta_C)$.
In this case, the codomain of such a decomposed Richelot isogeny is given as $J(C/\langle\sigma \rangle)\times J(C/\langle \sigma\circ\iota_C \rangle)$.
\end{theorem}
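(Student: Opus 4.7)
The ``if'' direction is an immediate consequence of Corollary \ref{cor:KTmain1}: given an extra involution $\sigma$ on $C$, that corollary produces a decomposed Richelot isogeny $J(C) \to J(C/\langle\sigma\rangle)\times J(C/\langle\sigma\circ\iota_C\rangle)$, which simultaneously settles existence and identifies the codomain. So the entire burden is the ``only if'' direction and the uniqueness of the codomain up to the choice of $\sigma$.

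For the ``only if'' direction my plan is to exploit the combinatorial description of the $2$-torsion of a hyperelliptic Jacobian. Writing $C : y^2 = f(x)$ with $f$ squarefree of degree $8$ with roots $P_1,\dots,P_8$, the group $J(C)[2]\cong(\mathbb{Z}/2\mathbb{Z})^6$ is identified (via Mumford's description) with the set of even-cardinality subsets of $\{P_1,\dots,P_8\}$ modulo complementation, and the Weil pairing is the parity of intersection. A Richelot isogeny corresponds to a maximal isotropic subgroup $K\subset J(C)[2]$ of order $8$. The first step is therefore to enumerate these maximal isotropic subgroups and isolate exactly those for which the codomain admits a nontrivial product decomposition as a principally polarized abelian variety.

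The key observation is that the decomposability of the codomain $(A,D)\cong (A_1,D_1)\times(A_2,D_2)$ forces a very restrictive combinatorial shape on $K$: the product structure of $D$ pulls back to a pair of complementary isotropic subgroups $K_1,K_2\subset K$ each of which is itself a Richelot-type kernel for the factor isogeny $J(C)\to A_i$. On the Weierstrass side this translates to a partition $\{P_1,\dots,P_8\} = S\sqcup S^{c}$ into two even-size subsets (with $|S|\in\{2,4\}$ depending on whether $\dim A_1$ is $1$ or $2$), together with a compatible pairing inside $S$ and $S^{c}$. I would then argue that such a partition produces an involution $\bar{\sigma}\in\mathrm{PGL}_2(k)$ on the base $\mathbb{P}^1$ permuting $\{P_1,\dots,P_8\}$ according to the partition: one normalizes coordinates so that $\bar{\sigma}(x)=-x$ (or $x\mapsto c/x$) sends $S$ to $S$ and $S^c$ to $S^c$ setwise, and then checks $f(\bar\sigma(x))=\lambda\,f(x)$ for a suitable constant $\lambda$, which is precisely the condition for $\bar\sigma$ to lift to an involution $\sigma\in\mathrm{Aut}(C)$ distinct from $\iota_C$ via the recipe of Lemma \ref{lem:hyperelliptic matrix} and Proposition \ref{prop:aut of order ell}.

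Once the extra involution $\sigma$ has been extracted, Corollary \ref{cor:KTmain1} applied to $(C,\sigma)$ yields a decomposed Richelot isogeny whose codomain is $J(C/\langle\sigma\rangle)\times J(C/\langle\sigma\circ\iota_C\rangle)$; comparing kernels with the original isogeny (both being order-$8$ maximal isotropic subgroups determined by the same Weierstrass partition) identifies the two codomains up to polarized isomorphism, delivering the final assertion. The main obstacle I expect is the case analysis of step two — namely, ruling out maximal isotropic $K$'s that lead to a decomposable codomain but do \emph{not} arise from a symmetric Weierstrass partition. This requires a careful use of the Weil-pairing constraint combined with the fact that $K$ splits compatibly with the product polarization, and is the place where the hypothesis ``genus $3$'' (rather than $g\geq 4$) is really used, since for $g=3$ every maximal isotropic $K$ with decomposable quotient actually factors through such a partition.
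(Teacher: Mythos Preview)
The paper does not give a self-contained proof: it simply refers to \cite[Theorem~1]{Katsura-Takashima} (essentially Corollary~\ref{cor:KTmain1} here) for one direction and to Katsura~\cite[\S4]{katsura2021decomposed} for the converse. So your proposal is really an attempt to reconstruct Katsura's argument, and the outline---classify maximal isotropic subgroups of $J(C)[2]$ via the Weierstrass combinatorics and isolate the decomposed ones---is in the right spirit.

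Two problems, one minor and one substantive.

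First, you have the labels reversed. In ``$A$ if and only if $B$'' the ``if'' part is $B\Rightarrow A$; here that is ``decomposed Richelot $\Rightarrow$ extra involution'', which is the hard direction, not the one handled by Corollary~\ref{cor:KTmain1}.

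Second, the crux is missing. You claim that decomposability forces $K$ to split as $K_1\oplus K_2$, each $K_i$ a ``Richelot-type kernel for the factor isogeny $J(C)\to A_i$''; but $J(C)\to A_i$ is not an isogeny (the dimensions differ), and its kernel \emph{contains} $K$ rather than lying inside it. What the product structure on $A$ actually gives is a splitting of the kernel of the dual isogeny inside $A[2]=A_1[2]\times A_2[2]$, and transporting that back to a statement about Weierstrass subsets of $C$ is work you have not done. Even granting some pairing on the eight branch points, your construction of $\bar\sigma$ is circular: you ``normalize coordinates so that $\bar\sigma(x)=-x$'', but the existence of a $\bar\sigma\in\mathrm{PGL}_2(k)$ inducing the given pairing is exactly what must be shown. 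A fixed-point-free pairing of eight points on $\mathbb{P}^1$ is generically \emph{not} realized by any involution of $\mathbb{P}^1$; the cross-ratio constraints that force this must be extracted from the decomposability hypothesis, and that deduction is the actual content of the theorem. Your final paragraph correctly flags this as the main obstacle, but the proposal contains no mechanism to overcome it.
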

\begin{proof}
See \cite[Theorem 1]{Katsura-Takashima} and Proposition \ref{prop:aut of order ell}.
\end{proof}

As for Theorem \ref{thm:KTmain3}, a (not necessarily hyperelliptic) Howe curve of genus $3$ associated with two genus-$1$ curves has two different automorphisms of order $2$.
These automorphisms are {\it long automorphisms} defined as follows:

\begin{definition}[Long automorphism, {\cite[Definition 3.4]{katsura2021decomposed}}]\label{def:long_aut}
Let $C$ be a curve of genus $g$, and $\sigma$ an order-$2$ automorphism of $C$.
We denote by $\sigma^{\ast}$ the action on $H^0(C,\Omega_C^1)$ induced from $\sigma$, where $\Omega_C^1$ is the sheaf of differential $1$-forms on $C$.
We call $\sigma$ a {\it long automorphism} if the $g$ eigenvalues of $\sigma^*$ are $1,-1,\ldots ,-1$ (the number of $-1$ is $g-1$).
\end{definition}

For an order-$2$ automorphism of a curve $C$, it is straightforward that $\sigma$ is a long automorphism if and only if the dimension of the space $\Gamma(C/\langle \sigma \rangle, \Omega_{C/\langle \sigma \rangle})$ is $1$, i.e., the quotient curve $C/\langle \sigma \rangle$ has genus $1$ (this also means that $C$ is {\it bielliptic}).
In the case of genus $3$, any order-$2$ automorphism of a non-hyperelliptic curve is a long automorphism (cf.\ \cite[Corollary 5.3]{katsura2021decomposed}), and moreover we have the following:

\begin{theorem}[{\cite[Proofs of Theorem 6.2 and Theorem 6.3]{katsura2021decomposed}}]\label{thm:completely decomposed Richelot}
Let $C$ be a curve of genus $3$ over $k$. 
Then the following are equivalent:
\begin{enumerate}
    \item $C$ is a Howe curve of genus $3$ associated with two genus-$1$ curves.
    \item $C$ has two long automorphisms $\sigma$ and $\tau$ with $\sigma\neq \tau$ such that $\sigma\circ\tau=\tau\circ\sigma$.
    \item[(3)] There is a completely decomposed Richelot isogeny $J(C) \to E_1 \times E_2 \times E_3$ for some genus-$1$ curves $E_1$, $E_2$, and $E_3$.
\end{enumerate}
In this case, with $\sigma$ and $\tau$ in (2), one can take $E_1$, $E_2$, and $E_3$ in (3) as $E_1:=C/\langle\sigma \rangle$, $E_2:=C/\langle\tau \rangle$, and $E_3:=C/\langle \sigma\circ\tau \rangle$ \textcolor{black}{respectively} such that $C$ is birational to $E_1 \times_{\mathbb{P}^1} E_2$.
\if 0
In this case, we also have the following:
\begin{enumerate}
    \item[(3)] There is a completely decomposed Richelot isogeny $J(C) \to E_1 \times E_2 \times E_3$ ((2) enables us to take $E_1$, $E_2$ and $E_3$ as $E_1:=C/\langle\sigma \rangle$, $E_2:=C/\langle\tau \rangle$ and $E_3:=C/\langle \sigma\circ\tau \rangle$ such that $C$ is birational to $E_1 \times_{\mathbb{P}^1} E_2$.
\end{enumerate}
Moreover, (3) implies (1) and (2) if $C$ is hyperelliptic.
\fi
\end{theorem}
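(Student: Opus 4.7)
The plan is to establish the cycle $(1) \Rightarrow (2) \Rightarrow (3) \Rightarrow (2) \Rightarrow (1)$ (equivalently, $(1) \Leftrightarrow (2)$ together with $(2) \Rightarrow (3)$ and $(3) \Rightarrow (2)$), leaning on Theorem~\ref{thm:KTmain3} for production of the Richelot isogeny and on Torelli's theorem for the reverse passage from polarized Jacobian back to involutions on $C$. The interplay between Howe-curve structure and commuting long involutions is essentially eigenvalue bookkeeping on $H^{0}(C,\Omega_{C}^{1})$.

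For $(1) \Rightarrow (2)$ and $(1) \Rightarrow (3)$: if $C$ is the normalization of $E_{1} \times_{\mathbb{P}^{1}} E_{2}$ with $g_{1} = g_{2} = 1$ and $r = 2$, then Proposition~\ref{Katsura-Takashima} yields $g(C) = 3$ and $C_{3} := C/\langle\sigma\tau\rangle$ of genus $1$, where $\sigma, \tau$ are the lifts of the hyperelliptic involutions of $E_{1}, E_{2}$ constructed in Subsection~\ref{subsec:Howe}. They commute because they come from independent factors, and they are long because each fixed quotient has genus $1$; Theorem~\ref{thm:KTmain3} then supplies the desired completely decomposed Richelot isogeny.

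For $(2) \Rightarrow (1)$: set $E_{1} := C/\langle\sigma\rangle$ and $E_{2} := C/\langle\tau\rangle$, both of genus $1$ by the long-ness hypothesis. The main step is to show $C/\langle\sigma,\tau\rangle \cong \mathbb{P}^{1}$, so that $E_{1}$ and $E_{2}$ share a common double cover. Since $\sigma^{*}$ and $\tau^{*}$ commute on $H^{0}(C,\Omega_{C}^{1}) \cong k^{3}$ and each has eigenvalues $(1,-1,-1)$, I would simultaneously diagonalize them; the joint $(+,+)$-eigenspace has dimension $0$ precisely when the two $(+1)$-eigenlines differ. I would rule out coincidence by contradiction: if they coincide then $\sigma^{*} = \tau^{*}$, and Torelli's theorem forces $\sigma = \tau$ in the non-hyperelliptic case, while in the hyperelliptic case the only other possibility $\sigma = \iota_{C}\tau$ forces $\sigma$ non-long since $\iota_{C}^{*} = -\mathrm{id}$ on $1$-forms. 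With $C/\langle\sigma,\tau\rangle$ now known to be rational, the universal property of fiber products induces a morphism $C \to E_{1} \times_{\mathbb{P}^{1}} E_{2}$, and a degree comparison of the two maps down to $\mathbb{P}^{1}$ shows that this morphism is birational, identifying $C$ with the normalization of $E_{1} \times_{\mathbb{P}^{1}} E_{2}$.

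For $(3) \Rightarrow (2)$: given a completely decomposed Richelot isogeny $\phi\colon J(C) \to E_{1} \times E_{2} \times E_{3}$, consider, for each pair $\{i,j\} \subset \{1,2,3\}$, the involution $\sigma_{ij}$ of $E_{1} \times E_{2} \times E_{3}$ negating the $i$th and $j$th factors. Each $\sigma_{ij}$ preserves the product polarization and acts trivially on $2$-torsion, so it preserves $\ker\phi \subset J(C)[2]$ and descends to an involution of $(J(C),\Theta_{C})$; Torelli then lifts it to an involution of $C$, unique up to composition with $\iota_{C}$ in the hyperelliptic case. The hard part will be pinning down the correct lift and verifying commutativity; I would handle this by noting that $\sigma_{ij}^{*}$ has eigenvalue pattern $(1,-1,-1)$ on the cotangent space, which flips to $(-1,1,1)$ upon composition with $\iota_{C}^{*} = -\mathrm{id}$, so the long lift is canonically determined, and any two long lifts must commute since their commutator acts trivially on $H^{0}(C,\Omega_{C}^{1})$. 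Taking $\sigma$ and $\tau$ to be long lifts of two of the $\sigma_{ij}$ gives (2), with the identifications $C/\langle\sigma\rangle \cong E_{i}$ etc.\ following from matching cotangent actions.
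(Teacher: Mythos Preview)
The paper does not supply its own proof of this theorem; it is stated as a citation of \cite[Proofs of Theorem~6.2 and Theorem~6.3]{katsura2021decomposed}, so there is nothing in the paper to compare against beyond the statement itself. Your sketch is an independent argument and is essentially correct; the cycle $(1)\Rightarrow(2)$, $(2)\Rightarrow(1)$, and $(1)/(2)\Rightarrow(3)$ via Theorem~\ref{thm:KTmain3} is clean, and the eigenvalue bookkeeping on $H^{0}(C,\Omega_{C}^{1})$ to force $g(C/\langle\sigma,\tau\rangle)=0$ is the right mechanism.

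One genuine slip to fix in $(3)\Rightarrow(2)$: you write that $\sigma_{ij}$ ``preserves $\ker\phi\subset J(C)[2]$ and descends to an involution of $(J(C),\Theta_C)$'', but $\sigma_{ij}$ lives on the codomain $A=E_1\times E_2\times E_3$, not on $J(C)$, so it cannot act on $\ker\phi$ at all. What you want is the companion isogeny $\psi\colon A\to J(C)$ with $\psi\circ\phi=[2]_{J(C)}$; then $\ker\psi=\phi(J(C)[2])\subset A[2]$, and since $\sigma_{ij}$ acts trivially on $A[2]$ it preserves $\ker\psi$ and therefore descends through $\psi$ to an automorphism of $J(C)$. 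After that correction the rest of your argument (Torelli lift, choosing the sign so that the lift is long, commutativity because $\pm 1$ is central) goes through. You should also note explicitly that the induced automorphism of $J(C)$ respects $\Theta_C$: this follows because $\sigma_{ij}$ respects the product polarization on $A$ and $\psi$ intertwines the two polarizations up to multiplication by~$2$.
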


\section{Computing decomposed Richelot isogenies}\label{sec:main1}

In this section, we present algorithms to explicitly compute the codomain of decomposed Richelot isogenies, based on the theory of \cite{katsura2021decomposed} and \cite{Katsura-Takashima} together with properties of automorphisms described in Subsection \ref{subsec:aut}.



\subsection{Case of hyperelliptic curves}\label{subsec:main1-hyp}
In this subsection, we explain a precise algorithm to compute decomposed Richelot isogenies of hyperelliptic curves. 
\begin{theorem}\label{thm:compute decom hyper}
Let $C$ be a hyperelliptic curve of genus $g$, and $\iota_C$ its hyperelliptic involution. Assume that $C$ has an extra involution $\sigma$.
For a given $C$, there is an algorithm (explicitly, Algorithm \ref{alg:decom richelot}) to compute defining equations for the quotient curves $C/\langle \sigma \rangle$ and $C/\langle \sigma \circ \iota_C \rangle$, which define the codomain of a decomposed Richelot isogeny in Theorem \ref{thm:decomposed Richelot}.
\textcolor{black}{
Moreover, all the computations in the algorithm are done in the quadratic extension of a subfield $K$ of $k$, over which all the Weierstrass points of $C$ are defined.
Furthermore, the number of arithmetic operations (in the quadratic extension field) required for executing the algorithm is upper-bounded by a constant.}
\end{theorem}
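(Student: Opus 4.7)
The plan is to reduce $(C,\sigma)$ to the normal form provided by Proposition~\ref{prop:aut of order ell} with $\ell=2$, and then read off the quotient curves from Lemma~\ref{lem:C/sigma hyper}. More precisely, I would first fix a hyperelliptic equation $y^2=f(x)$ for $C$ with $f\in K[x]$ (since all Weierstrass points lie in $K$, we may take $f$ to be the monic polynomial with those roots, of degree $2g+1$ or $2g+2$). The extra involution $\sigma$ induces a non-trivial involution $\bar\sigma$ of $\mathbb{P}^1_K$ permuting the $2g+2$ Weierstrass points of $C$; reconstructing $\bar\sigma$ from its action on three of these points gives a M\"{o}bius transformation with coefficients in $K$.

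Next, I would compute the two fixed points $\{\xi_1,\xi_2\}\subset\mathbb{P}^1(\bar K)$ of $\bar\sigma$ by solving the associated quadratic $\gamma x^2+(\delta-\alpha)x-\beta=0$ over $K$; both fixed points lie in the quadratic extension $K'\supseteq K$ (and no larger extension is needed). Choose a M\"{o}bius transformation $\rho\in\mathrm{PGL}_2(K')$ sending $\{\xi_1,\xi_2\}$ to $\{0,\infty\}$, so that $\rho\circ\bar\sigma\circ\rho^{-1}$ is exactly $x\mapsto -x$. Lifting $\rho$ to an isomorphism of hyperelliptic curves via Lemma~\ref{lem:hyperelliptic matrix}, one obtains $C'\colon y^2=\tilde f(x)$ with $\tilde f\in K'[x]$ on which the conjugated involution acts by $(x,y)\mapsto(-x,\pm y)$. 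By Proposition~\ref{prop:aut of order ell} this forces $\tilde f(-x)=\tilde f(x)$, so $\tilde f(x)=f_1(x^2)$ for a uniquely determined polynomial $f_1\in K'[x]$ (obtained trivially by collecting even-degree terms of $\tilde f$).

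At this point Lemma~\ref{lem:C/sigma hyper} immediately supplies the defining equations: $C'/\langle\sigma\rangle$ is $y^2=f_1(x)$ and $C'/\langle\sigma\circ\iota_{C'}\rangle$ is $y^2=x f_1(x)$ (or the analogous pair, depending on which of the two $\pm$ signs occurs in the action of the conjugated $\sigma$; both cases are identical in structure and distinguished by a single sign check). Packaging the above as Algorithm~\ref{alg:decom richelot} yields the required algorithm. For the field-of-definition statement, every operation above---reconstructing $\bar\sigma$ from three Weierstrass points, solving the quadratic for its fixed points, and applying $\rho$ to $f$ to obtain $\tilde f$---is performed entirely inside $K'=K[\sqrt{\mathrm{disc}}\,]$, the quadratic extension arising from the fixed-point quadratic.

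For the complexity claim, the only potentially heavy step is substituting the M\"{o}bius transformation into $f$, which costs a bounded number of polynomial manipulations on polynomials of degree $2g+2$; since $g$ is regarded as fixed in the statement, this amounts to a constant number of arithmetic operations in $K'$. The main subtlety---and the step I expect to require the most care---is verifying that reading off $\bar\sigma$ from its action on Weierstrass points indeed yields coefficients in $K$ (so that the quadratic extension $K'$ is genuinely the only extension needed), and handling the corner case in which $\bar\sigma$ happens to fix two Weierstrass points (in which case $K'=K$ and no extension is necessary at all). Both points follow from the uniqueness part of Lemma~\ref{lem:hyperelliptic matrix} together with the compatibility $\pi_C\circ\sigma=\bar\sigma\circ\pi_C$, but they deserve an explicit justification in the write-up.
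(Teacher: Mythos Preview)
Your plan is correct and follows essentially the same strategy as the paper: conjugate $\sigma$ so that its induced action on $\mathbb{P}^1$ becomes $x\mapsto -x$ (by sending the two fixed points of $\bar\sigma$ to $0$ and $\infty$, which requires at most a quadratic extension of $K$), and then read off the quotients from Lemma~\ref{lem:C/sigma hyper}.

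The one noteworthy difference is how $\bar\sigma$ is obtained. You assume that the action of $\sigma$ on three Weierstrass points is known, and reconstruct $\bar\sigma$ from that. The paper, however, treats only $C$ (i.e.\ the list of Weierstrass $x$-coordinates) as input, with no explicit description of $\sigma$; it therefore \emph{searches} for $\bar\sigma$ by looping over all candidate pairings $x_1\leftrightarrow x_{t_1}$, $x_s\leftrightarrow x_{t_2}$ of Weierstrass points, writing down the unique involution of $\mathbb{P}^1$ realizing each pairing via an explicit $2\times 2$ matrix (citing Elkies), diagonalizing, and then testing whether the transformed Weierstrass points come in $\pm$ pairs. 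This enumeration costs $O(g^2)$ candidates, still constant for fixed $g$. Your version is cleaner when $\sigma$ is genuinely part of the input; the paper's version is what is needed to match the stated Algorithm~\ref{alg:decom richelot}, whose input is $C$ alone and whose output is the set of \emph{all} such decompositions.
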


\begin{proof}
In the general case, it seems hard to compute $C/\langle \sigma \rangle$ and $C/\langle \sigma\circ\iota_C \rangle$ for some involution $\sigma$ from $C$. However, if $\sigma$ is the automorphism $(x,y)\mapsto (-x,y)$, then we can easily compute $C/\langle \sigma \rangle$ and $C/\langle \sigma\circ\iota_C \rangle$ from Lemma \ref{lem:C/sigma hyper}. Proposition \ref{prop:aut of order ell} claims that there is an isomorphism $\rho \colon C' \to C$ such that $\rho^{-1} \circ \sigma \circ \rho$ is the automorphism $(x,y)\mapsto (-x,y)$ or $(x,y)\mapsto (-x,-y)$ of $C'$.
By swapping $\sigma$ and $\sigma\circ \iota_C$ if necessary, it suffice to consider only the case that $\rho^{-1} \circ \sigma \circ \rho:(x,y)\mapsto (-x,y)$, and therefore constructing $\rho$ and $C'$ provides the target curves $C/\langle \sigma \rangle$ and $C/\langle \sigma\circ\iota_C \rangle$.

\textcolor{black}{Let $(x_1,0),\ldots ,(x_{2g+2},0)$ be the Weierstrass points of $C$. We have $(x_{t_1},0)=\sigma((x_1,0))$ and $(x_{t_2},0)=\sigma((x_s,0))$ 
for some $s,t_1,t_2\in\{1,\ldots,2g+2\}$ with $t_1 \neq 1$ and $s \neq 1, t_1$ and $t_2 \neq 1, t_1, s$. From \cite[Proposition A]{elkies1998shimura}, a representation matrix of $\sigma$ is given by
\[
\begin{pmatrix}
    x_sx_{t_2}-x_1x_{t_1} & (x_s+x_{t_2})x_1x_{t_1}-(x_1+x_{t_1})x_sx_{t_2} \\
    (x_s+x_{t_2})-(x_1+x_{t_1}) & x_1x_{t_1}-x_sx_{t_2}
\end{pmatrix}
.
\]
Note that if one of the Weierstrass points of $C$ is a point at infinity, the above matrix is constructed under a proper projectivization (i.e., use $(X_s:Z_s)$ with $X_s/Z_s=x_s$ for representing $(x_s,0)$).
Therefore, by diagonalizing the above matrix, we obtain a corresponding matrix of $\rho$ if $t_1,t_2,s$ are taken properly. That is, if $(x_{t_1},0)=\sigma((x_1,0))$ and $(x_{t_2},0)=\sigma((x_s,0))$, the automorphism $\rho^{-1}$ can be represented by
\[
\begin{pmatrix}
x_sx_{t_2}-x_1x_{t_1}+\lambda & (x_s+x_{t_2})x_1x_{t_1}-(x_1+x_{t_1})x_sx_{t_2} \\
-(x_sx_{t_2}-x_1x_{t_1})+\lambda & (x_1+x_{t_1})x_sx_{t_2}-(x_s+x_{t_2})x_1x_{t_1}
\end{pmatrix}
,
\]
where
{\small
\[
\lambda=\sqrt{(x_sx_{t_2}-x_1x_{t_1})^2+((x_s+x_{t_2})-(x_1+x_{t_1}))((x_s+x_{t_2})x_1x_{t_1}-(x_1+x_{t_1})x_sx_{t_2})}.
\]}
The candidates of $t_1$ are in the set $\{2,\ldots ,2g+2\}$, and those of $t_2$ are in the set $\{2,\ldots, 2g+2\}\smallsetminus \{s,t_1\}$. It is enough to take $s$ as the minimal number other than $1$ and $t_1$. Therefore, the number of the candidates of $\rho$ is at most $2(2g+1)(2g-1)=2(4g^2-1)$. 
If $\rho$ is a correct one, then the image points of Weierstrass points of $C$ under $\rho^{-1}$ can be represented by $(\pm x_1',0),\ldots, (\pm x_{g+1}',0)$ for some $x_1',\ldots ,x_{g+1}'\in k$. Conversely, if the image points of Weierstrass points of $C$ under $\rho^{-1}$ have forms of $(\pm x_1',0),\ldots, (\pm x_{g+1}',0)$, then $C'$ has an automorphism $(x,y)\mapsto (-x,y)$, and $\rho$ is the correct candidate.
Therefore, correct candidates of $\rho$ are provided by checking for each candidate whether the images of Weierstrass points under $\rho^{-1}$ are $(\pm x_1',0),\ldots, (\pm x_{g+1}',0)$ for some $x_1',\ldots ,x_{g+1}'\in k$.
}

\textcolor{black}{From} the above discussions, we can compute an isomorphism $\rho^{-1}$, and this gives an algorithm to compute the codomain of a decomposed Richelot isogeny outgoing from $J(C)$.
In Algorithms \ref{alg:find long auto hyper} and \ref{alg:decom richelot}, we write down a computational method described above in algorithmic format.

We finally determine the complexity of Algorithm \ref{alg:decom richelot}.
Let $K$ be a subfield of $k$ over which all the Weierstrass points of $C$ are defined, and $K'$ its quadratic extension field.
As for Algorithm \ref{alg:find long auto hyper} called at the line $1$, the number of iterations for each {\bf for}-loop is bounded by a constant, and clearly all the computations are done in a constant number of \textcolor{black}{arithmetic operations} on $K'$. \textcolor{black}{For} the same reason, the cardinality of $R$ is also bounded by a constant.
Clearly the lines $4$--$7$ require $O(1)$ \textcolor{black}{arithmetic operations on} $K'$, and consequently our assertion for the total complexity holds.
\end{proof}

\begin{algorithm}[t]
\textcolor{black}{
\caption{Compute extra involutions of hyperelliptic curves (\textsf{ExtraInv})} 
\label{alg:find long auto hyper} 
\begin{algorithmic}[1] 
\REQUIRE A hyperelliptic curve $C\colon y^2=(x-x_1)\cdots (x-x_{2g+2})$
\ENSURE A set of pairs of a matrix corresponding to an isomorphism $C\to C'$ that diagonalizes an extra involution of $C$ and all Weierstrass points of $C'$
\STATE $R \leftarrow \emptyset$
\FORALL{$t_1 \in \{2,\ldots,2g+2\}$}
\STATE $s\leftarrow$ the minimum integer in $\{2,\ldots,2g+2\}\smallsetminus \{t_1\}$
\FORALL{$t_2 \in \{2,\ldots,2g+2\}\smallsetminus \{t_1, s\}$}
\STATE $\alpha \leftarrow x_sx_{t_2}-x_1x_{t_1}$
\STATE $\beta \leftarrow (x_s+x_{t_2})x_1x_{t_1}-(x_1+x_{t_1})x_sx_{t_2}$
\STATE $\gamma \leftarrow (x_s+x_{t_2})-(x_1+x_{t_1})$
\STATE $\lambda \leftarrow \sqrt{\alpha^2+\beta\gamma}$
\STATE $\displaystyle x_1',\ldots ,x_{2g+2}' \leftarrow \frac{(\alpha+\lambda)x_1+\beta}{(-\alpha+\lambda)x_1-\beta},\ldots ,\frac{(\alpha+\lambda)x_{2g+2}+\beta}{(-\alpha+\lambda)x_{2g+2}-\beta}$
\IF{for all $i=1,\ldots , 2g+2$, there exists $j\neq i$ such that $x_i'=-x_j'$}
\STATE $R \leftarrow R\cup \left\{\left(\begin{pmatrix}\alpha+\lambda & \beta \\ -\alpha+\lambda & -\beta \end{pmatrix},(x_1',\ldots,x_{2g+2}')\right)\right\}$
\ENDIF
\ENDFOR
\ENDFOR
\RETURN $R$
\end{algorithmic}
}
\end{algorithm}

\begin{algorithm}[t]
\caption{Decomposed Richelot isogeny of hyperelliptic genus-$g$ curves} 
\label{alg:decom richelot} 
\begin{algorithmic}[1] 
\REQUIRE A hyperelliptic curve $C\colon y^2=(x-x_1)\cdots (x-x_{2g+2})$
\ENSURE A set of pairs of a curve $C_1$ of genus $\lfloor g/2 \rfloor$ and a curve $C_2$ of genus $\lceil g/2 \rceil$ such that there is a decomposed Richelot isogeny $J(C)\to J(C_1)\times J(C_2)$
\STATE $R \leftarrow \mathsf{ExtraInv}(C)$ (Algorithm \ref{alg:find long auto hyper})
\STATE $S\leftarrow \emptyset$
\FORALL{$(Q,(x_1',\ldots,x_{2g+2}'))\in R$}
\STATE Assume $x_{i}'=-x_{i+(g+1)}'$ for $i=1,\ldots, g+1$ by changing indexes
\STATE $f_1(x) \leftarrow (x-x_1'^2)\cdots (x-x_{g+1}'^2),\quad f_2(x) \leftarrow x\cdot f_1(x)$
\STATE $C_1\colon y^2=f_1(x),\quad C_2\colon y^2=f_2(x)$
\STATE $S \leftarrow S\cup \{(C_1,C_2)\}$
\ENDFOR
\RETURN $S$
\end{algorithmic}
\end{algorithm}

\subsection{Case of genus-3 hyperelliptic curves having completely decomposed Richelot isogenies}
Next, we explain a computational method for completely decomposed Richelot isogenies from Jacobian varieties of hyperelliptic curves of genus $3$.

\begin{theorem}\label{thm:complete hyper}
Let $C$ be a hyperelliptic curve of genus $3$. Assume that $C$ has two long automorphisms $\sigma$ and $\tau$ such that $\sigma \circ \tau =\tau \circ \sigma$.
For a given $C$, there is an algorithm to compute curves constructing the codomain of a completely decomposed Richelot isogeny in Theorem \ref{thm:completely decomposed Richelot}, that is, $C/\langle \sigma \rangle$, $C/\langle \tau \rangle$, and $C/\langle \sigma \circ \tau \rangle$. 
\textcolor{black}{
Moreover, all the computations in the algorithm are done in the quadratic extension of a subfield $K$ of $k$, over which all the Weierstrass points of $C$ are defined.
Furthermore, the number of arithmetic operations (in the quadratic extension field) required for executing the algorithm is upper-bounded by a constant.}
\end{theorem}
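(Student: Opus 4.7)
The plan is to reduce to the single‑involution situation already handled in Theorem~\ref{thm:compute decom hyper}. My first observation is that each of $\sigma$, $\tau$, and $\sigma\circ\tau$ is a long (hence extra) involution: in genus $3$ the hyperelliptic involution $\iota_C$ acts as $-1$ on all of $H^0(C,\Omega_C^1)$, so it is not long, and since $\sigma^\ast$ and $\tau^\ast$ commute and both have eigenvalue pattern $(1,-1,-1)$, a simultaneous diagonalization forces (after reordering) $\sigma^\ast=\mathrm{diag}(1,-1,-1)$ and $\tau^\ast=\mathrm{diag}(-1,1,-1)$, whence $(\sigma\circ\tau)^\ast=\mathrm{diag}(-1,-1,1)$ is again long. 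In particular, all three quotients $C/\langle\sigma\rangle$, $C/\langle\tau\rangle$, $C/\langle\sigma\circ\tau\rangle$ are of genus $1$.

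Next, I would run \textsf{ExtraInv} (Algorithm~\ref{alg:find long auto hyper}) once on $C$. By Proposition~\ref{prop:aut of order ell}, for every extra involution $\alpha$ of $C$ there is an isomorphism $\rho_\alpha\colon C_\alpha\to C$ onto a model $C_\alpha\colon y^2=f_\alpha(x^2)$ with a square‑free quartic $f_\alpha$ such that $\rho_\alpha^{-1}\alpha\rho_\alpha$ is the standard involution $(x,y)\mapsto(-x,y)$, and the constant‑size set $R$ returned by \textsf{ExtraInv} contains the corresponding diagonalizing pair. For each entry of $R$, Lemma~\ref{lem:C/sigma hyper} then yields a genus‑$1$ equation $y^2=f_\alpha(x)$ for $C/\langle\alpha\rangle$ in constant time — this is precisely the long component of the output of Algorithm~\ref{alg:decom richelot}; the companion genus‑$2$ quotient $y^2=x f_\alpha(x)$ plays no role here. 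Entries corresponding to $\sigma$, $\tau$, and $\sigma\circ\tau$ are guaranteed to appear in $R$ by the completeness of the sweep over $(t_1,t_2)$ inside \textsf{ExtraInv}.

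Finally, to single out the correct three entries I would iterate over triples in $R$ and retain any triple whose associated diagonalizing matrices pairwise commute in $\mathrm{PGL}_2(k)$; such a triple exists by hypothesis, and the resulting genus‑$1$ equations are the codomain of the completely decomposed Richelot isogeny from Theorem~\ref{thm:completely decomposed Richelot}. For the complexity, \textsf{ExtraInv} takes one square root per iteration (of $\alpha^2+\beta\gamma$ in the notation of Algorithm~\ref{alg:find long auto hyper}) and the number of iterations is bounded by a constant, so all arithmetic happens in the quadratic extension $K'$ of $K$; the subsequent triple‑enumeration, commutator tests, and formation of the quartics $f_\alpha$ are likewise constant‑cost in $K'$. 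The main delicate point — and my chief worry — is that the commuting triple the algorithm picks really represents $\{\sigma,\tau,\sigma\circ\tau\}$ and not some spurious $V_4$; but this is harmless, because any triple of pairwise‑commuting long extra involutions determines a valid completely decomposed Richelot codomain via Theorem~\ref{thm:completely decomposed Richelot}, so every correct triple gives an acceptable output.
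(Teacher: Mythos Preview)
Your overall strategy matches the paper's: run \textsf{ExtraInv} once, then isolate commuting involutions and read off the quotients via Lemma~\ref{lem:C/sigma hyper}. The paper differs only in how it produces the third curve: instead of locating $\sigma\tau$ among the entries of $R$, it multiplies the matrices of $\sigma$ and $\tau$, invokes Proposition~\ref{thm:completely decom hyper more eff} to pin down the eigenvalues $\pm\sqrt{-1}\,\lambda_A\lambda_B$, and diagonalizes directly. Your route is equally valid and arguably cleaner, since it sidesteps that auxiliary proposition.

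There is, however, a concrete error in your selection step. The matrices stored in $R$ are the \emph{diagonalizing} matrices $Q$ (changes of coordinate), not the involutions themselves; there is no reason for two such $Q$'s to commute, and in general they do not. What you must test is whether the reconstructed involution matrices $A_Q:=Q\,\mathrm{diag}(-1,1)\,Q^{-1}$ pairwise commute in $\mathrm{PGL}_2(k)$. Writing $P=Q_\alpha^{-1}Q_\beta=\bigl(\begin{smallmatrix}a&b\\c&d\end{smallmatrix}\bigr)$, a short computation shows that (for distinct images) $A_{Q_\alpha}$ and $A_{Q_\beta}$ commute in $\mathrm{PGL}_2(k)$ precisely when $ad+bc=0$; this is the check you need. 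Once that is fixed, your closing worry disappears as well: if $\alpha,\beta,\gamma$ are the long representatives at three entries whose $A_Q$'s pairwise commute, then $\bar\gamma=\bar\alpha\bar\beta$ forces $\alpha\beta\in\{\gamma,\iota_C\gamma\}$, both of which are involutions, so $(\alpha\beta)^2=1$ gives $\alpha\beta=\beta\alpha$ already in $\mathrm{Aut}(C)$; and since your eigenvalue argument makes $\alpha\beta$ long while $\iota_C\gamma$ is not, in fact $\alpha\beta=\gamma$.
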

\begin{proof}
As shown in the proof of Theorem \ref{thm:compute decom hyper}, it is enough to find a hyperelliptic curve $C'$ and an isomorphism $\rho\colon C' \to C$ such that $(\rho^{-1}\circ \sigma \circ \rho )(x,y)=(-x,y)$ in order to compute a curve $C/\langle \sigma \rangle$. Therefore, to compute $C/\langle \sigma \rangle$ and $C/\langle \tau \rangle$, we need to find eigenvector matrices of two long automorphisms $\sigma$ and $\tau$ satisfying $\sigma\circ\tau =\tau\circ\sigma$.
\textcolor{black}{By using Algorithm \ref{alg:find long auto hyper}, we can find $\sigma$ and $\tau$ and corresponding eigenvectors of them. After checking the commutativity of $\sigma$ and $\tau$, we can compute $\sigma\circ\tau$ and $C/\langle\sigma\circ\tau\rangle$ from diagonalization. Proposition \ref{thm:completely decom hyper more eff} below provides the eigenvalues of the corresponding matrix of $\sigma\circ \tau$.}

\textcolor{black}{The assertions on the complexity follow immediately from Theorem \ref{thm:compute decom hyper}.}
\end{proof}

\textcolor{black}{For the eigenvalues of $\sigma\circ\tau$, we have the following proposition.}

\begin{proposition}\label{thm:completely decom hyper more eff}
Let $C$ be a hyperelliptic curve of genus $3$. Assume that $C$ has two long automorphisms $\sigma$ and $\tau$.
\textcolor{black}{Let $A$ and $B$ be matrices in $\mathrm{GL}_2(k)$ corresponding to $\sigma$ and $\tau$ respectively. Denote the eigenvalues of $A$ and $B$ by $\pm \lambda_A$ and $\pm \lambda_B$ respectively. Then, the eigenvalues of $AB$ are $\pm\sqrt{-1}\lambda_A\lambda_B$.}

\end{proposition}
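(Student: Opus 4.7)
The plan is to translate everything into a linear-algebra statement about $A, B \in \mathrm{GL}_2(k)$. First, using Proposition \ref{prop:aut of order ell} (after passing to a suitable model) together with Lemma \ref{lem:hyperelliptic matrix}, the long involution $\sigma$ is represented by a non-scalar matrix $A$ satisfying $A^{2}=\lambda_A^{2} I$ for some $\lambda_A\in k^\times$, so its eigenvalues are exactly $\pm\lambda_A$; the same applies to $\tau$ and $\lambda_B$. This justifies the $\pm$-notation in the statement.

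The key observation is that the equivalence $(M,\lambda)\equiv (\mu M,\mu^{g+1}\lambda)$ from Lemma \ref{lem:hyperelliptic matrix} makes composition of automorphisms well-defined only up to a scalar on matrix representatives. Hence $\sigma\circ\tau=\tau\circ\sigma$ does not force $AB=BA$; it only yields $AB=\mu BA$ for some $\mu\in k^\times$ with $\mu^{g+1}=\mu^{4}=1$, i.e., a fourth root of unity. I would then rule out the remaining possibilities other than $\mu=-1$: the cases $\mu=\pm\sqrt{-1}$ are excluded by a short entry-wise computation after diagonalizing $A$, which forces $B=0$; and $\mu=1$ forces simultaneous diagonalization of $A$ and $B$, which in turn forces $AB$ to be scalar, meaning $\sigma\tau$ acts trivially on $\mathbb{P}^{1}$ and hence $\sigma\tau\in\{\mathrm{id}_C,\iota_C\}$. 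The latter case is excluded by the context in which the proposition is applied, namely when $\sigma\tau$ is itself a nontrivial long involution distinct from $\iota_C$.

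With $AB=-BA$ in hand, the eigenvalues are read off directly: $(AB)^{2}=A(BA)B=-A^{2}B^{2}=-\lambda_A^{2}\lambda_B^{2}I$, and taking traces, $\mathrm{tr}(AB)=-\mathrm{tr}(BA)=-\mathrm{tr}(AB)$ gives $\mathrm{tr}(AB)=0$ in characteristic $\neq 2$. Hence the characteristic polynomial of $AB$ is $t^{2}+\lambda_A^{2}\lambda_B^{2}$, and its roots are $\pm\sqrt{-1}\,\lambda_A\lambda_B$, as claimed. The main obstacle is the careful bookkeeping of the scalar-equivalence in the second step, together with making the implicit hypothesis explicit that $\sigma\tau$ is again a nontrivial non-hyperelliptic involution; without this assumption the statement would fail in the degenerate case $\sigma=\tau$, where $AB=\lambda_A^{2}I$ has repeated eigenvalue $\lambda_A^{2}$ rather than the purely imaginary pair predicted by the formula.
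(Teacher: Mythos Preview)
Your proof is correct and follows the same overall strategy as the paper: show $AB=cBA$ for some scalar $c$, argue that $c=-1$, and then read off the eigenvalues from $(AB)^2=-\lambda_A^2\lambda_B^2 I_2$.

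The one substantive difference is in how you pin down $c$. You extract $c^{4}=1$ from the scalar equivalence in Lemma~\ref{lem:hyperelliptic matrix} and then eliminate $c=\pm\sqrt{-1}$ by an entrywise computation after diagonalizing $A$. The paper takes a shorter, genus-independent route: since $AB$ and $BA$ are always conjugate they share the same eigenvalue multiset $\{\alpha,\beta\}$, so $AB=cBA$ forces $\{c\alpha,c\beta\}=\{\alpha,\beta\}$ with $\alpha,\beta\neq 0$, giving $c^{2}=1$ immediately and bypassing the $\pm\sqrt{-1}$ cases entirely. Your trace computation at the end is likewise correct but redundant, as $(AB)^{2}=-\lambda_A^{2}\lambda_B^{2}I_2$ already determines the eigenvalues. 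On the other hand, your remark that the $c=1$ case a priori leads to $\sigma\tau\in\{\mathrm{id}_C,\iota_C\}$ (not just $\mathrm{id}_C$) is more careful than the paper's phrasing; the case $\sigma\tau=\iota_C$ is in fact excluded by the hypothesis that both $\sigma$ and $\tau$ are long, since then $\tau=\sigma\iota_C$ would act on $H^{0}(C,\Omega_C^{1})$ with eigenvalues $-1,1,1$ and hence fail to be long.
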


\begin{proof}
Suppose that $\sigma\circ \tau=\tau\circ\sigma$. There is a constant value $c$ such that
\[
AB=c BA.
\]
Let $\alpha$ and $\beta$ be the eigenvalues of $AB$. Since $AB$ is a regular matrix, we have $\alpha \neq 0$ and $\beta \neq 0$.
Since $AB$ and $BA$ have same eigenvalues, it holds that
\[
(c \alpha,c\beta)=(\alpha,\beta) \text{ or }(c \alpha,c\beta)=(\beta,\alpha),
\]
and thus $c^2=1$.
We moreover claim $c=-1$.
Indeed, if $c =1$ (i.e., $AB=BA$), then $A$ and $B$ are simultaneously diagonalizable.
Thus it follows 
that $AB=\pm\textcolor{black}{\lambda_A\lambda_B}I_2$, which contradicts $\sigma \circ \tau \neq \mathrm{id}_C$.
Therefore we have $c=-1$ and $AB=-BA$. 
\textcolor{black}{Since $A^2=\lambda_A^2I_2$ and $B^2=\lambda_B^2I_2$, it holds that \[
(AB)^2=ABAB=-A^2B^2=-\lambda_A^2\lambda_B^2I_2.
\]
Therefore, the eigenvalues of $AB$ are $\pm\sqrt{-1}\lambda_A\lambda_B$.}
\end{proof}


%

\if 0
\begin{algorithm}[t]
\caption{Completely decomposed Richelot isogeny of hyperelliptic genus-$3$ curves} 
\label{alg:completely decom richelot} 
\begin{algorithmic}[1] 
\REQUIRE A genus-$3$ hyperelliptic curve $C\colon y^2=(x-x_1)\cdots (x-x_8)$
\ENSURE A set of sequences of three elliptic curves $E_1$, $E_2$, and $E_3$ such that there is a decomposed Richelot isogeny $J(C)\to E_1\times E_2\times E_3$
\STATE $R \leftarrow \mathsf{ExtraInv}(C)$ (Algorithm \ref{alg:find long auto hyper})
\STATE $S\leftarrow \emptyset$
\FORALL{$(Q_A,(x_{1,A}',\ldots,x_{8,A}')),(Q_B,(x_{1,B}',\ldots,x_{8,B}'))\in R$}
\STATE $\begin{pmatrix} a&b \\ c & d \end{pmatrix}\leftarrow Q_A^{-1}Q_B$
\IF{$ad+bc=0$ }
\STATE Assume $x_{i,*}'=-x_{i+4,*}'$ for $i=1,\ldots, 4$ by changing indexes
\STATE $f_1(x) \leftarrow (x-x_{1,A}'^2)\cdots (x-x_{4,A}'^2),\quad f_2(x) \leftarrow (x-x_{1,B}'^2)\cdots (x-x_{4,B}'^2)$
\STATE $E_1\colon y^2=f_1(x),\quad E_2\colon y^2=f_2(x)$
\STATE $\displaystyle\begin{pmatrix}\tilde{a} & \tilde{b} \\ \tilde{c} & \tilde{d} \end{pmatrix}\leftarrow Q_B
\begin{pmatrix}
-\sqrt{-1} & 0 \\
0 & 1
\end{pmatrix}
Q_B^{-1}Q_A $
\IF{$\tilde{c}\neq 0$ and $\tilde{d}\neq 0$}
\STATE $\displaystyle Q \leftarrow \begin{pmatrix}\tilde{a}/\tilde{c} & \tilde{b}/\tilde{d} \\ 1 & 1 \end{pmatrix}$
\ELSE
\STATE $\displaystyle Q \leftarrow \begin{pmatrix}1 & \tilde{b}/\tilde{d} \\ 0 & 1 \end{pmatrix}$ \quad or \quad $\displaystyle Q \leftarrow \begin{pmatrix}1 & \tilde{a}/\tilde{c} \\ 0 & 1 \end{pmatrix}$
\ENDIF
\STATE $Q_{swap} \leftarrow Q \begin{pmatrix}0 & 1 \\ 1 & 0 \end{pmatrix}$
\FORALL{$(\tilde{Q},(\tilde{x}_1,\ldots, \tilde{x}_8))\in R$ with $\tilde{Q} \neq Q_A$ and $\tilde{Q}\neq Q_B$}
\IF{$Q=\tilde{Q}$ or $Q_{swap}=\tilde{Q}$}
\STATE Assume $\tilde{x}_i'=-\tilde{x}_{i+4}$ for $i=1,\ldots, 4$ by changing indexes
\STATE $f_3(x) \leftarrow (x-\tilde{x}_1^2)\cdots (x-\tilde{x}_4^2)$
\STATE $E_3\colon y^2=f_3(x)$
\STATE $S \leftarrow S\cup \{(E_1,E_2,E_3)\}$
\ENDIF
\ENDFOR
\ENDIF
\ENDFOR
\RETURN $S$
\end{algorithmic}
\end{algorithm}
\fi

\subsection{Case of smooth plane quartics (non-hyperelliptic curves of genus 3)}
In this subsection, we explain a way to compute decomposed Richelot isogenies outgoing from the Jacobian variety of a non-hyperelliptic curve of genus $3$. \textcolor{black}{In this subsection, we suppose that $k$ is an algebraic closure of $\mathbb{F}_p$ with $p \geq 5$.}
Note that from {\cite[Corollary 5.2]{katsura2021decomposed}}, such isogenies are always completely decomposed.


Let $C$ be a non-hyperelliptic curve of genus $3$ over $k$, which is isomorphic to a non-singular plane quartic $F(X,Y,Z)=0$, where $F$ is an irreducible quartic form in $X$, $Y$, and $Z$ over $k$.
Recall also that any isomorphism between plane non-singular curves over $k$ is represented by an element of $\mathrm{PGL}_3(k)$ uniquely.
From Theorem \ref{thm:completely decomposed Richelot}, if there is a completely decomposed Richelot isogeny outgoing from $J(C)$, then the automorphism group of $C$ contains $V_4$ as its subgroup, where $V_4$ is the dihedral group of order $4$, i.e., $V_{4} = (\mathbb{Z}/2\mathbb{Z})^2$.
Namely $C$ has different two long automorphisms $\sigma$ and $\tau$ with $\sigma\circ \tau=\tau\circ\sigma$.
Such non-hyperelliptic curves are isomorphic to curves of the following form (cf.\ \cite[Theorem 3.1]{lercier2014parametrizing}):
\begin{align}\label{eq:non-hyper special}
C'\colon a_1x^4+a_2y^4+a_3x^2y^2+a_4x^2+a_5y^2+a_6=0
\end{align}
with $a_i\in k$, which is referred to as standard form (cf.\ \cite{Bouw}, \cite{Ohashi2}).
In this case, we easily find long automorphisms of $C'$, say $\sigma  : (x,y)\mapsto (-x,y)$ and $\tau : (x,y)\mapsto (x,-y)$ with
$\sigma\circ \tau=\tau\circ \sigma$, and a completely decomposed Richelot isogeny
\[
J(C')\to C'/\langle \sigma\rangle \times C'/\langle \tau\rangle\times C'/\langle \sigma\circ \tau \rangle,
\]
where $C'/\langle \sigma\rangle$, $C'/\langle \tau\rangle$, and $C'/\langle \sigma\circ\tau\rangle$ are elliptic curves defined as:
\begin{align}\label{eq:non-hyper special target curves}
C'/\langle \sigma\rangle &\colon a_1x^2+a_2y^4+a_3xy^2+a_4x+a_5y^2+a_6=0,\nonumber\\
C'/\langle \tau\rangle &\colon a_1x^4+a_2y^2+a_3x^2y+a_4x^2+a_5y+a_6=0,\\
C'/\langle \sigma\circ\tau\rangle &\colon a_1x^4+a_2+a_3x^2+a_4x^2y+a_5y+a_6y^2=0.\nonumber
\end{align}
Therefore, our problem is reduced into finding a curve $C'$ in standard form \eqref{eq:non-hyper special} and an isomorphism between $C'$ and $C$ explicitly, in order to compute completely decomposed Richelot isogenies outgoing from $J(C)$.
For finding them, it suffices to compute a matrix in $\mathrm{PGL}_3(k)$ representing an isomorphism $\rho \colon C' \to C$ such that $(\rho^{-1}\circ\sigma\circ\rho)(X:Y:Z)=(-X:Y:Z)$ and $(\rho^{-1}\circ\tau\circ\rho)(X:Y:Z)=(X:-Y:Z)$.


\begin{theorem}\label{thm:decom richelot non-hyper}
With notation as above, the computation of a matrix in $\mathrm{PGL}_3(k)$ representing $\rho \colon C' \to C$ is reduced into that of solving a zero-dimensional multivariate system over a finite extension of a subfield $K$ of $k$, 
\textcolor{black}{where the extension degree does not depend on $p$, and where any coefficient of $F$ belongs to $K$.
Furthermore, the number of arithmetic operations (in the finite extension field) required for the computation is upper-bounded by a constant.}
\end{theorem}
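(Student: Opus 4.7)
The plan is to encode the condition on $\rho$ as an explicit polynomial system of constant size and then invoke the zero-dimensional Gr\"obner basis complexity bound from \cite{ZeroDimGroebner} already recalled earlier in the excerpt.

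First I would parametrize $\rho \in \mathrm{GL}_3(k)$ by nine unknown entries together with six auxiliary unknowns $a_1, \ldots, a_6$. The defining condition is
\[
F\bigl((X,Y,Z)\cdot{}^t\rho\bigr) = a_1 X^4 + a_2 Y^4 + a_3 X^2 Y^2 + a_4 X^2 Z^2 + a_5 Y^2 Z^2 + a_6 Z^4,
\]
which expresses that the pullback of $C$ by $\rho$ is in standard form \eqref{eq:non-hyper special} and therefore admits $\sigma\colon (X:Y:Z)\mapsto(-X:Y:Z)$ and $\tau\colon(X:Y:Z)\mapsto(X:-Y:Z)$ as commuting long involutions. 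Expanding and equating coefficients of the fifteen degree-four monomials yields a polynomial system over the field $K$ generated by the coefficients of $F$: nine equations force the coefficients of the non-standard monomials in $F^\rho$ to vanish, while the remaining six equations merely express the $a_i$. Normalizing one nonzero entry of $\rho$ removes the projective scaling and leaves a system in a constant number of variables of total degree at most $4$.

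Next I would establish that this system is zero-dimensional. Existence of a solution follows from the assumption together with Theorem \ref{thm:completely decomposed Richelot} and the standard-form statement \cite[Theorem 3.1]{lercier2014parametrizing}. For finiteness, I would observe that a valid $\rho$ is determined, up to the finite residual subgroup of $\mathrm{PGL}_3(k)$ preserving the shape \eqref{eq:non-hyper special} (generated by coordinate swaps and sign changes), by the ordered pair $(\rho^{-1}\sigma\rho,\rho^{-1}\tau\rho)$ of commuting long involutions inside $\mathrm{Aut}(C)$; since $\mathrm{Aut}(C)$ is a finite group of order bounded by an absolute constant for non-hyperelliptic curves of genus three, the number of valid $\rho$ is bounded, giving zero-dimensionality.

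Finally, a zero-dimensional polynomial system in a bounded number of variables and of bounded total degree can be solved, in the sense of producing triangular Gr\"obner data from which each solution is extracted, in $O(1)$ arithmetic operations over the base field by \cite{ZeroDimGroebner}. By B\'ezout, the coordinates of the solutions lie in an algebraic extension of $K$ of degree bounded by an absolute constant not depending on $p$, and passing from a solution to defining equations of $C/\langle\sigma\rangle$, $C/\langle\tau\rangle$, and $C/\langle\sigma\tau\rangle$ via \eqref{eq:non-hyper special target curves} adds only $O(1)$ further operations. The main subtlety I expect is the projective normalization of $\rho$: one should choose it so that the field of definition of the selected branch of solutions remains a fixed finite extension of $K$, rather than drifting in a way that depends on $p$; this should be a matter of bookkeeping rather than a substantive obstacle.
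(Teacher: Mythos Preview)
Your overall strategy coincides with the paper's: write the condition that $F((X,Y,Z)\cdot{}^t\rho)$ has the standard shape \eqref{eq:non-hyper special}, obtain a polynomial system of bounded size, argue zero-dimensionality via the finiteness of $\mathrm{Aut}(C)$, and conclude by a Gr\"obner bound. However, your normalization step contains a genuine gap, not merely bookkeeping.

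You fix a single entry of $\rho$ to remove the projective scaling, and then argue that the residual freedom is the \emph{finite} group ``generated by coordinate swaps and sign changes''. This is incorrect: if $Q$ puts $F$ into standard form with coefficients $(a_1,\ldots,a_6)$, then so does $Q\cdot\mathrm{diag}(c_1,c_2,c_3)$ for \emph{any} $c_i\in k^\times$, simply with rescaled $a_i$. Concretely, the matrices commuting simultaneously with $\mathrm{diag}(-1,1,1)$ and $\mathrm{diag}(1,-1,1)$ are exactly the diagonal matrices, so for a fixed pair $(\sigma,\tau)$ the simultaneous diagonalizer $Q$ is determined only up to the full $2$-dimensional diagonal torus in $\mathrm{PGL}_3(k)$. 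Consequently your system, after pinning one entry, still has a $2$-parameter family of solutions and is not zero-dimensional; the cited complexity bound does not apply.

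The paper repairs this by normalizing one nonzero entry \emph{per column} of $Q$, which kills the entire diagonal torus. Since one does not know in advance which entries are nonzero, this forces a finite case split (the paper lists five explicit shapes for $Q$), and in each case the resulting system in at most six unknowns is genuinely zero-dimensional, with the number of roots bounded via Lemma~\ref{lem:root is bounded}. Your proof becomes correct once you replace ``normalize one nonzero entry'' by this per-column normalization and the accompanying case analysis.
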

\begin{proof}
In general, \textcolor{black}{any $A$ in $\mathrm{PGL}_3(k)$ of order $2$ has eigenvalues $1$ and $-1$}, so that it can be diagonalized into $\mathrm{diag}(-1,1,1)$
by multiplying $A$ by $-1$ if necessary.
Let $A_\sigma$ and $A_\tau$ be matrices corresponding to $\sigma$ and $\tau$, respectively, say, $A_\sigma A_\tau=A_\tau A_\sigma$.
Since $A_\sigma \neq A_\tau $, there is a matrix $Q$ such that
\[
A_\sigma=Q
\begin{pmatrix}
-1 & 0 & 0\\
0 & 1 & 0\\
0 & 0 & 1
\end{pmatrix}
Q^{-1}
\text{ and }
A_\tau=Q
\begin{pmatrix}
1 & 0 & 0\\
0 & -1 & 0\\
0 & 0 & 1
\end{pmatrix}
Q^{-1}.
\]
Then, the matrix $Q$ corresponds to the target isomorphism $\rho \colon C' \to C$,
and so it suffices to construct $Q$ for our purpose.
Note that if $A_{\sigma\tau}$ is a matrix in $\mathrm{PGL}_3(k)$ corresponding to $\sigma\circ\tau$, then it holds that
\[
A_{\sigma\tau}=Q
\begin{pmatrix}
1 & 0 & 0\\
0 & 1 & 0\\
0 & 0 & -1
\end{pmatrix}
Q^{-1}.
\]
Therefore, swapping columns of $Q$ is equivalent to swapping three long automorphisms $\sigma$, $\tau$, and $\sigma\circ\tau$. Moreover, we know that multiplying each column of $Q$ by a constant do not affect diagonalizations of $A_\sigma$, $A_\tau$, and $A_{\sigma\tau}$. Hence, it suffices to consider the cases that the matrix $Q$ is defined as follows:
\[
\begin{pmatrix}
a & b & c \\
d & e & f \\
1 & 1 & 1
\end{pmatrix}
\text{ or }
\begin{pmatrix}
a & b & c \\
1 & e & f \\
0 & 1 & 1
\end{pmatrix}
\text{ or }
\begin{pmatrix}
a & b & c \\
1 & 1 & f \\
0 & 0 & 1
\end{pmatrix}
\text{ or }
\begin{pmatrix}
1 & b & c \\
0 & e & f \\
0 & 1 & 1
\end{pmatrix}
\text{ or }
\begin{pmatrix}
1 & b & c \\
0 & 1 & f \\
0 & 0 & 1
\end{pmatrix}.
\]
Here, $C'$ is defined by $F((X:Y:Z)\cdot {}^t Q)=0$, and it follows from \eqref{eq:non-hyper special} that the coefficient of any monomial in $F((X,Y,Z)\cdot {}^t Q)$ other than $X^4$, $Y^4$, $X^2 Y^2$, $X^2 Z^2$, $Y^2 Z^2$, and $Z^4$ should be zero.
From this, for each of the above $5$ case, we obtain a multivariate system of quartic equations with respect to $a$, $b$, $c$, $d$, $e$, and $f$,
\if 0
We here explain the way in the first case only, since the same way can be constructed for the other cases.
The curve $C'$ is defined by $F((X:Y:Z)\cdot {}^t Q)=0$.
As $F((X:Y:Z)\cdot {}^t Q)=0$ is represented by the equation \eqref{eq:non-hyper special}, the coefficient of any monomial in $F((X:Y:Z)\cdot {}^t Q)$ other than $X^4$, $Y^4$, $X^2 Y^2$, $X^2 Z^2$, $Y^2 Z^2$, and $Z^4$ should be zero.
From this, we obtain a multivariate system of quartic equations with respect to $a,b,c,d,e,f$.
Hence, we can compute target matrices $Q$ by solving the system with e.g., the Gr\"{o}bner basis computation.
This gives the target codomain of a completely decomposed Richelot isogeny outgoing from $J(C)$.
(In Algorithms \ref{alg:completely decom richelot non-hyper}, we summarize a computational method described above in algorithmic format.)
\fi
and \textcolor{black}{it follows from Lemma \ref{lem:root is bounded} that the number of its roots is bounded by a constant not depending on $p$.}

Since each of the systems consists of $9$ inhomogeneous polynomials of degree at most $4$ in at most $6$ variables, it follows from arguments in Remark \ref{rem:GB} below that computing the roots of each system with Gr\"{o}bner basis algorithms terminates in $\tilde{O}(1)$ \textcolor{black}{arithmetic operations} in a finite extension field $K'$ of $K$, where the extension degree does not depend on $p$.
It is clear that the other parts of the algorithm requires $O(1)$ \textcolor{black}{arithmetic operations} in $K'$.
\end{proof}

\begin{lemma}\label{lem:root is bounded}
The number of roots of each multivariate system constructed in Theorem \ref{thm:decom richelot non-hyper} is bounded by a constant not depending on $p$.
\end{lemma}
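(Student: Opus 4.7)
The plan is to combine a geometric finiteness argument with a Bezout-type degree bound: first show that each of the five systems defines a zero-dimensional variety, and then conclude via the classical degree bound that the number of solutions is at most a constant depending only on the degrees and number of variables, not on $p$.

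For the finiteness step, I would interpret the solutions geometrically. For a fixed normalized shape of $Q$, a solution yields a matrix $Q \in \mathrm{PGL}_3(k)$ such that the two standard commuting involutions $\sigma_0 \colon (X{:}Y{:}Z) \mapsto (-X{:}Y{:}Z)$ and $\tau_0 \colon (X{:}Y{:}Z) \mapsto (X{:}-Y{:}Z)$ are conjugated by $Q$ to long involutions $\sigma, \tau \in \mathrm{Aut}(C)$ with $\sigma \circ \tau = \tau \circ \sigma$ and $\sigma \neq \tau$. Given such an ordered pair $(\sigma,\tau)$, any two matrices $Q_1, Q_2$ effecting the same conjugation satisfy $Q_1^{-1} Q_2 \in \mathrm{Cent}_{\mathrm{PGL}_3(k)}(\langle \sigma_0, \tau_0 \rangle)$; this centralizer is the $2$-dimensional diagonal torus in $\mathrm{PGL}_3(k)$, and the five shapes tabulated in the proof of Theorem \ref{thm:decom richelot non-hyper} are chosen precisely to give a complete set of representatives modulo the column-scaling action of this torus. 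Hence within each shape the normalization forces $Q$ to be determined by $(\sigma,\tau)$. Since $\mathrm{Aut}(C)$ is finite (because $g(C) = 3 \geq 2$), only finitely many such ordered pairs exist, and so each of the five systems has only finitely many solutions.

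For the bound step, I would appeal to the classical Bezout bound. Each system consists of polynomial equations of degree at most $4$ in at most $6$ unknowns, arising as coefficients of the quartic form $F((X,Y,Z)\cdot {}^tQ)$. Any zero-dimensional ideal generated by polynomials of these degrees in that many variables has at most $4^6 = 4096$ distinct zeros over $k$, which is a constant independent of $p$.

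The main obstacle is the rigidity assertion used in the finiteness step, namely the claim that the five tabulated shapes indeed exhaust all representatives modulo the column-scaling torus, so that no continuous residual freedom in $Q$ remains inside any one shape. This reduces to a finite case analysis of which entries of $Q$ can be normalized to $0$ or $1$ depending on which of them vanish; once carried out, the Bezout bound closes the proof with a constant uniform in $p$.
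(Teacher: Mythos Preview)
Your proof is correct and reaches the same conclusion, but the route to the \emph{uniform} (i.e., $p$-independent) bound is genuinely different from the paper's.

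Both arguments begin identically: a solution of the system yields a matrix $Q$ conjugating the standard pair $(\sigma_0,\tau_0)$ to a pair of commuting involutions in $\mathrm{Aut}(C)$, and the shape normalization pins down $Q$ uniquely within each torus coset (your centralizer computation and the paper's ``each common eigenspace has dimension $1$'' are the same observation). Where you diverge is in extracting uniformity. The paper invokes an external structural fact---the classification of automorphism groups of smooth plane quartics \cite[Theorem 3.1]{lercier2014parametrizing}---to bound the number of involutions in $\mathrm{Aut}(C)$ by an absolute constant, and then multiplies by $6$ for the column permutations. You instead use only the qualitative finiteness of $\mathrm{Aut}(C)$ (valid for any curve of genus $\geq 2$) to conclude zero-dimensionality, and then feed this into a B\'ezout-type degree bound to get the uniform constant $4^6$.

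Each approach has its merits. Yours is more self-contained: it does not require knowing anything specific about automorphism groups of plane quartics, only the coarse fact that $\mathrm{Aut}(C)$ is finite. The paper's approach, on the other hand, avoids the B\'ezout machinery and yields a sharper explicit bound tied to the actual group-theoretic possibilities. Note that the degree bound you invoke is precisely the one the paper records in Remark~\ref{rem:GB} (citing \cite[Theorem 5.4]{Ritscher2012DegreeBA}), so your argument fits naturally into the surrounding text. The ``main obstacle'' you flag---that each shape admits at most one representative per torus coset---is indeed the crux, and your centralizer argument handles it cleanly: in every one of the five shapes each column has exactly one entry normalized to $1$, which fixes the corresponding diagonal scalar.
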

\begin{proof}
Since $\rho \circ [(x,y)\mapsto (-x,y)] \circ \rho^{-1}$ and $\rho \circ [(x,y)\mapsto (x,-y)] \circ \rho^{-1}$ are long automorphisms of $C$, each $Q$ determined by a root of the multivariate system simultaneously diagonalizes at least one pair of long automorphisms. 
Let $\sigma$ and $\tau$ be different long automorphisms of $C$ such that $Q$ simultaneously diagonalizes these long automorphisms. Since the dimension of each common eigenspace of $\sigma$ and $\tau$ is $1$, the other matrices determined by roots that simultaneously diagonaize $\sigma$ and $\tau$ are obtained by swapping columns of $Q$. Therefore, the number of candidates of $Q$ that simultaneously diagonalizes $\sigma$ and $\tau$ is at most $6$. From \cite[Theorem 3.1]{lercier2014parametrizing}, the number of long automorphsims of a non-hyperelliptic curve is bounded by a constant not depending on $p$. Hence, our claim \textcolor{black}{on} the number of roots holds.
\end{proof}

\begin{remark}\label{rem:GB}
\textcolor{black}{In general, for a zero-dimensional affine system of inhomogeneous polynomials $f_1, \ldots , f_m \in K[x_1,\ldots , x_n]$ with $n \leq m$ whose total degrees are $d_1 \geq \ldots \geq d_m$ respectively, the maximal total degree of elements in the reduced Gr\"{o}bner basis of $\mathcal{F}=\{ f_1, \ldots , f_m \}$ with respect to an arbitrary monomial order is upper-bounded by $d_1 \cdots d_n$, see \cite[Theorem 5.4]{Ritscher2012DegreeBA}.
    This implies that the extension degree of an extension field $L$ of $K$ where all the roots in $\overline{K}^n$ of $\mathcal{F}$ are defined is bounded by a function of $d_1, \ldots , d_n$.}

    \textcolor{black}{A typical way of computing the roots of a zero-dimensional affine system $\mathcal{F}$ is as follows:
    Compute the reduced Gr\"{o}bner basis $\mathcal{G}$ of the homogenization of $\mathcal{F}$ with respect to a graded monomial order by e.g., the $F_4$ algorithm~\cite{faugere1999new}.
    Then, convert $\mathcal{G}$ to a Gr\"{o}bner basis $\mathcal{G}'$ with respect to a lexicographic monomial order by the FGLM basis conversion~\cite{faugere1993efficient}.
    Substituting $1$ for an extra variable for homogenization and reducing it, we obtain the reduced Gr\"{o}bner basis $\mathcal{G}''$ of the input $\mathcal{F}$ with respect to an induced lexicographic monomial order.
    We can compute the roots of $\mathcal{G}''$ by repeating the univariate polynomial factorization and evaluations, by extending base fields if necessary.
    All of these computations are done over $L$ defined as above.
    It is well-known that the complexity of the $F_4$ algorithm is bounded by a function of the number $n$ of variables, the maximal total degree $d$ of the input $\mathcal{F}$, and the number $m$ of the input polynomials.
    The complexity of the FGLM algorithm is $O(n D^3)$ operations in $K$, where $D$ denotes the number of roots of $\mathcal{F}$ in $\overline{K}^n$ counted with multiplicity.
    It is also well-known that $D$ is bounded by $d_1 \cdots d_n$.
    }

    \textcolor{black}{If $K = \mathbb{F}_q$ and if $n$, $m$, $d_1,\ldots , d_m$ are fixed (not depending on the characteristic $p$ of $K$), we can take $L = \mathbb{F}_{q^r}$ for some constant $r$ not depending on $p$.
    In this case, the complexities of the $F_4$ and FGLM algorithms are both $O(1)$ arithmetic operations in $K$.
    All the roots in $L^n$ of the input system can be computed by repeating the univariate polynomial factorization and evaluations, which are done in $\tilde{O}(1)$ arithmetic operations in $L$.}
\end{remark}

\section{Explicit construction of Howe curves of genus 3}\label{sec:main2}

\textcolor{black}{Let $k$ be an algebraic closure of $\mathbb{F}_p$ with $p \geq 3$.}
In this section, we determine defining equations of generalized Howe curves of genus $3$ explicitly.
Recall that a generalized Howe curve $C$ is a curve birational to the fiber product of two hyperelliptic curves of genera $g_1$ and $g_2$, and it follows from Proposition \ref{Katsura-Takashima} that $C$ has genus $3$ if and only if $(g_1,g_2) = (1,1)$ or $(1,2)$, namely $C$ is birational to either of the following:
\begin{itemize}
\item {\bf (Oort type)} $E_1 \times_{\mathbb{P}^1} E_2$ for two genus-$1$ curves $E_1$ and $E_2$ which share exactly $2$ Weierstrass points, or
\item {\bf (Howe type)} $E \times_{\mathbb{P}^1} H$ for a genus-$1$ curve $E$ and a genus-$2$ curve $H$ which share exactly $4$ Weierstrass points. 
\end{itemize}
Note that any $C$ of Howe type is always hyperelliptic by Proposition \ref{Katsura-Takashima}, and we may assume from Lemma \ref{lem:V4} that $E$, $H$, and $C$ are given as
\begin{equation}\label{eq:HoweType}
    E \colon y^2=f(x),\quad H \colon y^2=xf(x),\quad C : y^2=f(x^2),
\end{equation}
where $f$ is a quartic polynomial over {$k$} with non-zero discriminant.

On the other hand, any $C$ of Oort type can be either hyperelliptic or non-hyperelliptic.
\textcolor{black}{
Transforming the $2$ common Weierstrass points to $0$ and $\infty$ by an element of $\mathrm{PGL}_2(k)$, we may assume that $E_1$ and $E_2$ are given as follows:
\begin{equation}\label{eq:Oort0}
\begin{split}
    E_1 & :  y_1^2 = \phi_1 (x) = x(x-\alpha_2)(x-\alpha_3),\\
    E_2 & :  y_2^2 = \phi_2 (x) = x(x-\beta_2)(x-\beta_3),
\end{split}
\end{equation}
where $\alpha_2$, $\alpha_3$, $\beta_2$, and $\beta_3$ are pairwise distinct elements in $k \smallsetminus \{ 0 \}$.
If necessary, we can replace the equation for $E_1$ by a Legendre form:
Considering the transformation $x \mapsto \frac{x}{\alpha_2}$, we can replace $E_1$ and $E_2$ by
\begin{equation}\label{eq:OortCurve}
E_{\nu}: y^2=x(x-1)(x-\nu)\quad \mbox{and} \quad E_{\mu,\lambda}:y^2=x(x-\mu)(x-\mu\lambda)
\end{equation}
respectively, where $\nu = \frac{\alpha_3}{\alpha_2}$, $\mu = \frac{\beta_2}{\alpha_2}$, and $\lambda = \frac{\beta_3}{\beta_2}$.
In this case, recall from \cite[\S 5.11]{oort1991hyperelliptic} that $C$ is hyperelliptic if and only if $\nu=\mu^2\lambda$, which is equivalent to $\alpha_2 \alpha_3 = \beta_2 \beta_3$ by $\nu - \mu^2 \lambda = \frac{\alpha_2 \alpha_3 - \beta_2 \beta_3}{\alpha_2^2}$.
}
\if 0
\begin{lemma}\label{lem:a2a3}
    With notation as above, $C$ is hyperelliptic if and only if $\alpha_2 \alpha_3 = \beta_2 \beta_3$.
\end{lemma}

\begin{proof}
Considering the transformation $x \mapsto \frac{x}{\alpha_2}$, we deduce that $H$ is isomorphic to the generalized Howe curve $H'$ of genus $3$ associated with
\[
E_1' : y_1^2 = x (x-1)(x-\nu),\quad  E_2' : y_2^2 = x (x-\mu) (x - \mu \lambda)
\]
with $\nu = \frac{\alpha_3}{\alpha_2}$, $\mu = \frac{\beta_2}{\alpha_2}$, and $\lambda = \frac{\beta_3}{\beta_2}$, and we have
\[
\nu - \mu^2 \lambda = \frac{\alpha_3}{\alpha_2} - \frac{\beta_2^2}{\alpha_2^2} \cdot \frac{\beta_3}{\beta_2} = \frac{\alpha_2 \alpha_3 - \beta_2 \beta_3}{\alpha_2^2}.
\]
Oort's criterion says that $H'$ is hyperelliptic if and only if $\nu - \mu^2 \lambda = 0$, whence the assertion holds.
\end{proof}
\fi
\if 0
\begin{equation}\label{eq:OortCurve}
E_1\colon y^2=x(x-1)(x-\nu),\quad E_2\colon y^2=x(x-\mu)(x-\mu\lambda),
\end{equation}
where $\mu,\lambda,\nu \in k$ with $\mu , \lambda , \nu \notin \{ 0,1\}$.
In this case, it follows from \cite[\S 5.11]{oort1991hyperelliptic} that $C$ is hyperelliptic if and only if $\nu=\mu^2\lambda$.
\fi

In Subsection \ref{subsec:OortHyp} (resp.\ \ref{subsec:OortNonHyp}) below, we shall find an equation for the normalization $C$ of Oort type in the hyperelliptic (resp.\ non-hyperelliptic) case.
We also study the field of definition, the $\mathbb{F}_{p^2}$-maximality and the $\mathbb{F}_{p^2}$-minimality in the case where $C$ is superspecial.

\begin{remark}
Howe, Lepr{\'e}vost, and Poonen proposed in \cite{howe2000large} a formula to compute a curve of genus $3$ whose Jacobian variety is Richelot isogenous to a given product of three elliptic curves. Our method is different from their method. In fact, their method is similar to V\'{e}lu's formulas, while our method directly constructs a fiber product of elliptic curves.
\end{remark}

\begin{remark}
As for the case of genus $2$, Katsura and Oort~\cite[Section 2]{katsura1987supersingular} studied genus-$2$ (hyperelliptic) curves with automorphism group containing a subgroup isomorphic to $V_4$; such a genus-$2$ curve is nothing but a genus-$2$ generalized Howe curve $C$ birational to $E_1 \times_{\mathbb{P}^1} E_2$ with elliptic curves $E_1$ and $E_2$ (cf.\ Subsection \ref{subsec:Howe}).
In particular, they counted superspecial $C$'s with $\overline{\mathrm{Aut}}(C) = \mathbb{Z}/2\mathbb{Z}$ by investigating the supersingularity of $E_1$ and $E_2$ as quotient curves of $C$.
\end{remark}

\subsection{Hyperelliptic case}\label{subsec:OortHyp}
The following theorem determines an equation for a hyperelliptic Howe curve of Oort type:


\if 0
There are two cases about hyperelliptic Howe curves of genus $3$. One is the case of hyperelliptic Howe curves constructed by two elliptic curves which share exactly two Weierstrass points, and the other is that of hyperelliptic Howe curves constructed by an elliptic curve and a genus-$2$ curve such that they share exactly four Weierstrass points. We explain the first case in Theorem \ref{thm:Howe curve hyper elliptic} and the second case in Proposition \ref{prop:Howe curve hyper genus 2}.
\fi

\begin{theorem}\label{thm:Howe curve hyper elliptic}
\textcolor{black}{Let $\alpha_2,\alpha_3,\beta_2,\beta_3\in k$ such that these are not $0$ and different from each other. Assume that $\alpha_2\alpha_3=\beta_2\beta_3$, and let $E_1$ and $E_2$ be elliptic curves defined as in \eqref{eq:HoweType}.}
Then, a hyperelliptic Howe curve $C$ of \textcolor{black}{genus $3$ birational} to $E_1\times_{\mathbb{P}^1}E_2$ can be represented by
\textcolor{black}{
\begin{align}\label{eq:Howe hyper equation}
\begin{aligned}
y^2=&\left(x^2-\frac{(\sqrt{\alpha_2}+\sqrt{\alpha_3}+\sqrt{\beta_2}+\sqrt{\beta_3})^2}{(\alpha_2+\alpha_3-\beta_2-\beta_3)}\right)
\left(x^2-\frac{(\sqrt{\alpha_2}+\sqrt{\alpha_3}-\sqrt{\beta_2}-\sqrt{\beta_3})^2}{(\alpha_2+\alpha_3-\beta_2-\beta_3)}\right)\\
&\left(x^2-\frac{(\sqrt{\alpha_2}-\sqrt{\alpha_3}+\sqrt{\beta_2}-\sqrt{\beta_3})^2}{(\alpha_2+\alpha_3-\beta_2-\beta_3)}\right)\left(x^2-\frac{(\sqrt{\alpha_2}-\sqrt{\alpha_3}-\sqrt{\beta_2}+\sqrt{\beta_3})^2}{(\alpha_2+\alpha_3-\beta_2-\beta_3)}\right).
\end{aligned}
\end{align}}
\end{theorem}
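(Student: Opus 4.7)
My plan is to construct the hyperelliptic involution of $C$ explicitly and then read off the hyperelliptic equation from the locations of the $8$ Weierstrass points. Put $c := \alpha_2\alpha_3 = \beta_2\beta_3$; the M\"obius map $\phi\colon x \mapsto c/x$ on $\mathbb{P}^1$ swaps $0\leftrightarrow\infty$, $\alpha_2 \leftrightarrow \alpha_3$, and $\beta_2 \leftrightarrow \beta_3$, so it permutes the Weierstrass loci of both $E_1$ and $E_2$; this is precisely where the hypothesis $\alpha_2\alpha_3 = \beta_2\beta_3$ is used. From the identity $\phi_i(c/x) = (c/x^2)^2\phi_i(x)$ for $i = 1, 2$ one sees that $\phi$ lifts to the involution
\[
\iota_C\colon (x, y_1, y_2) \longmapsto \bigl(c/x,\; cy_1/x^2,\; cy_2/x^2\bigr)
\]
on the fiber product $C$. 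Its fixed points are exactly the $8$ points over $x = \pm\sqrt{c}$ (with $y_1, y_2$ free among their square roots), so by Riemann--Hurwitz $C/\langle\iota_C\rangle$ has genus $0$; hence $\iota_C$ is the hyperelliptic involution of $C$.

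I then parametrize $C/\langle\iota_C\rangle \cong \mathbb{P}^1$ via $\iota_C$-invariants. The functions $u := x + c/x$ and $v_i := y_i/x$ are invariant and satisfy $v_1^2 = u - (\alpha_2+\alpha_3)$ and $v_2^2 = u - (\beta_2+\beta_3)$. Subtracting yields the smooth conic $v_1^2 - v_2^2 = -D$ with $D := \alpha_2+\alpha_3-\beta_2-\beta_3\neq 0$, rationally parametrized by $s := v_1 - v_2$ via $v_1 = (s^2 - D)/(2s)$ and $v_2 = -(s^2 + D)/(2s)$. A dimension count confirms $k(C)^{\iota_C} = k(s)$, so $s$ is an affine coordinate on the quotient $\mathbb{P}^1$.

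Finally, I compute the $s$-coordinates of the $8$ Weierstrass points and rescale to match the stated form. Setting $a = \sqrt{\alpha_2}$, $b = \sqrt{\alpha_3}$, $p = \sqrt{\beta_2}$, $q = \sqrt{\beta_3}$ with $ab = pq = \sqrt{c}$, a short calculation gives $\phi_1(\epsilon_0\sqrt{c}) = -c(a - \epsilon_0 b)^2$ and $\phi_2(\epsilon_0\sqrt{c}) = -c(p - \epsilon_0 q)^2$. Evaluating $s = (y_1 - y_2)/x$ at the $8$ fixed points, indexed by $(\epsilon_0, \epsilon_1, \epsilon_2) \in \{\pm 1\}^3$, produces exactly the four values $s^2 = -(a \pm b \pm p \pm q)^2$ having an even number of minus signs, each attained by a pair $s \leftrightarrow -s$. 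Hence $C$ admits a hyperelliptic model $Y^2 = \lambda \prod_{i=1}^4 (s^2 + N_i^2)$ for $N_i \in \{a+b+p+q,\, a+b-p-q,\, a-b+p-q,\, a-b-p+q\}$ and some scalar $\lambda$; substituting $s = \sqrt{-D}\, x$ turns each factor into $-D(x^2 - N_i^2/D)$, and absorbing the resulting (square) constant into $y$ yields \eqref{eq:Howe hyper equation}. The main delicate step is the sign bookkeeping, i.e.\ verifying that the $8$ fixed points of $\iota_C$ yield exactly the four prescribed values $r_i = N_i^2/D$ without any collapse; this follows from the constraint that the four sign patterns $(\eta_1,\eta_2,\eta_3,\eta_4)$ arising from $(\epsilon_0,\epsilon_1,\epsilon_2)$ all satisfy $\eta_1\eta_2\eta_3\eta_4 = +1$.
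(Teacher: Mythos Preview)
Your argument is correct and follows the same overall strategy as the paper: identify the hyperelliptic involution of $C$ as the lift of $x\mapsto c/x$ on $\mathbb{P}^1$ (using $\alpha_2\alpha_3=\beta_2\beta_3$), show the quotient is a rational conic, parametrize it, and read off the eight Weierstrass images. Your verification that the eight sign patterns $(\eta_1,\eta_2,\eta_3,\eta_4)$ with $\prod\eta_i=+1$ are exactly the ones arising, together with the distinctness check forced by $D\neq 0$ and the pairwise distinctness of $\alpha_2,\alpha_3,\beta_2,\beta_3$, is the key bookkeeping step and is handled correctly.

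Where you diverge from the paper is in the execution. The paper first diagonalizes the M\"obius involution $x\mapsto c/x$ by an explicit $\rho\in\mathrm{PGL}_2(k)$, replaces $E_1,E_2$ by transformed curves $E_1',E_2'$ on which the involution acts as $x\mapsto -x$, then realizes the quotient $(E_1'\times_{\mathbb{P}^1}E_2')/\langle s'\rangle$ as an explicit curve in $\mathbb{P}^3$, maps that to a plane conic, and finally parametrizes. You bypass this chain of coordinate changes entirely by working with the $\iota_C$-invariant functions $u=x+c/x$ and $v_i=y_i/x$; the relation $v_1^2-v_2^2=-D$ delivers the conic immediately, and $s=v_1-v_2$ parametrizes it in one line. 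Your route is shorter and avoids the auxiliary curves $E_i'$; the paper's route has the mild advantage that after diagonalization everything fits the standard template of Lemma~\ref{lem:C/sigma hyper} (quotients by $(x,y)\mapsto(-x,\pm y)$), which ties the computation back to the general machinery developed earlier in the paper.
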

\begin{proof}
Note that $C$ is a hyperelliptic curve from \textcolor{black}{the same discussion in} \cite[\S 5.11]{oort1991hyperelliptic}. To find an equation defining $C$, it suffices to compute image points of its Weierstrass points under the double cover $C\to \mathbb{P}^1$ of degree $2$. Since the Weierstrass points of $C$ are fixed points of the hyperelliptic involution of $C$, constructing the hyperelliptic involution provides the Weierstrass points. From \cite[\S 5.11]{oort1991hyperelliptic}, there is an order-$2$ automorphism $s$ of $\mathbb{P}^1$ that corresponds to the hyperelliptic involution of $C$ and satisfies \textcolor{black}{$s(0:1)=(1:0)$, $s(\alpha_2:1)=(\alpha_3:1)$, and $s(\beta_2:1)=(\beta_3:1)$}. By a straightforward computation, we have\textcolor{black}{
\[
Q
\begin{pmatrix}
\alpha_2\\
1
\end{pmatrix}
=
\begin{pmatrix}
\alpha_2\alpha_3\\
\alpha_2
\end{pmatrix}
,\quad
Q
\begin{pmatrix}
\beta_2\\
1
\end{pmatrix}
=
\begin{pmatrix}
\alpha_2\alpha_3\\
\beta_2
\end{pmatrix}
=
\begin{pmatrix}
\beta_2\beta_3\\
\beta_2
\end{pmatrix}
,\quad 
Q^2
=\alpha_2\alpha_3 I_2
,
\]
where $Q=\begin{pmatrix}
0 & \alpha_2\alpha_3 \\
1 & 0
\end{pmatrix}$. Therefore, $Q$ corresponds to the automorphism $s$.} Next, we find the fixed points of $Q$ in $E_1\times_{\mathbb{P}^1}E_2$ under the lifted automorphism of $s$. By considering the diagonalization of $Q$, we have\textcolor{black}{
\[
\begin{pmatrix}
-\sqrt{\alpha_2\alpha_3} & \sqrt{\alpha_2\alpha_3}\\
1 & 1
\end{pmatrix}
^{-1}
\begin{pmatrix}
0 & \alpha_2\alpha_3 \\
1 & 0
\end{pmatrix}
\begin{pmatrix}
-\sqrt{\alpha_2\alpha_3} & \sqrt{\alpha_2\alpha_3}\\
1 & 1
\end{pmatrix}
=\sqrt{\alpha_2\alpha_3}
\begin{pmatrix}
-1 & 0 \\
0 & 1
\end{pmatrix}.
\]
}Hence, there is an automorphism $\rho$ of $\mathbb{P}^1$ such that $(\rho^{-1}\circ s \circ \rho) (x)=-x$. The automorphism $\rho$ leads two isomorphism $\rho_1 \colon E_1' \to E_1$ and $\rho_2 \colon E_2' \to E_2$. By a direct computation, it holds that\textcolor{black}{
\begin{align*}
E_1' &\colon y^2= (x^2-1)\left(x^2-\left(\frac{\sqrt{\alpha_2}-\sqrt{\alpha_3}}{\sqrt{\alpha_2}+\sqrt{\alpha_3}}\right)^2\right),\\
E_2' &\colon y^2= (x^2-1)\left(x^2-\left(\frac{\sqrt{\beta_2}-\sqrt{\beta_3}}{\sqrt{\beta_2}+\sqrt{\beta_3}}\right)^2\right).
\end{align*}
}More precisely, $E_1'$ and $E_2'$ are curves obtained by gluing two curves as the definition of hyperelliptic curves, respectively. Namely, the curve $E_1'$ is a curve represented by gluing\textcolor{black}{
\[
y^2= (x^2-1)\left(x^2-\left(\frac{\sqrt{\alpha_2}-\sqrt{\alpha_3}}{\sqrt{\alpha_2}+\sqrt{\alpha_3}}\right)^2\right)\text{ and }y^2= (1-x^2)\left(1-\left(\frac{\sqrt{\alpha_2}-\sqrt{\alpha_3}}{\sqrt{\alpha_2}+\sqrt{\alpha_3}}\right)^2 x^2\right)
\]
}by a map $(x,y)\mapsto (1/x,y/x^2)$.
The curve $E_1\times_{\mathbb{P}^1}E_2$ is isomorphic to\textcolor{black}{
\[
E_1'\times_{\mathbb{P}^1}E_2' \colon
\begin{cases}
y_1^2= (x^2-1)\left(x^2-\left(\frac{\sqrt{\alpha_2}-\sqrt{\alpha_3}}{\sqrt{\alpha_2}+\sqrt{\alpha_3}}\right)^2\right)\\
y_2^2= (x^2-1)\left(x^2-\left(\frac{\sqrt{\beta_2}-\sqrt{\beta_3}}{\sqrt{\beta_2}+\sqrt{\beta_3}}\right)^2\right)
\end{cases}.
\]
}Note that the automorphism $s'\colon (x,y_1,y_2)\mapsto (-x,y_1,y_2)$ of $E_1'\times_{\mathbb{P}^1}E_2'$ corresponds to the hyperelliptic involution of $C$.
The fixed points of $E_1'\times_{\mathbb{P}^1}E_2'$ under $s'\colon (x,y_1,y_2)\mapsto (-x,y_1,y_2)$ are points whose $x$-coordinates are zero and points at infinity. These points are image points of Weierstrass points of $C$ under a birational map between $E_1'\times_{\mathbb{P}^1}E_2'$ and $C$. Now, we construct a map $E_1'\times_{\mathbb{P}^1}E_2' \to \mathbb{P}^1$ of degree $2$. From the definition of $s'$, we have\textcolor{black}{
\[
(E_1'\times_{\mathbb{P}^1}E_2')/\langle s' \rangle \colon
\begin{cases}
Y_1^2= (X-Z)\left(X-\left(\frac{\sqrt{\alpha_2}-\sqrt{\alpha_3}}{\sqrt{\alpha_2}+\sqrt{\alpha_3}}\right)^2Z\right)\\
Y_2^2= (X-Z)\left(X-\left(\frac{\sqrt{\beta_2}-\sqrt{\beta_3}}{\sqrt{\beta_2}+\sqrt{\beta_3}}\right)^2Z\right)
\end{cases}.
\]
}Here, we represent $(E_1'\times_{\mathbb{P}^1}E_2')/\langle s' \rangle$ via projective coordinates $(X:Y_1:Y_2:Z)$.
Note that the natural surjective map $E_1'\times_{\mathbb{P}^1}E_2' \to (E_1'\times_{\mathbb{P}^1}E_2')/\langle s' \rangle$ is of degree $2$.
We construct an isomorphism between $(E_1'\times_{\mathbb{P}^1}E_2')/\langle s' \rangle$ and $\mathbb{P}^1$. Let \textcolor{black}{$\alpha =\frac{\sqrt{\alpha_2}-\sqrt{\alpha_3}}{\sqrt{\alpha_2}+\sqrt{\alpha_3}}$ and $\beta=\frac{\sqrt{\beta_2}-\sqrt{\beta_3}}{\sqrt{\beta_2}+\sqrt{\beta_3}}$}. We have an isomorphism as follows:
\[
\begin{array}{ccc}
    (E_1'\times_{\mathbb{P}^1}E_2')/\langle s' \rangle & \longrightarrow & \frac{\beta^2-1}{\beta^2-\alpha^2}S^2 - \frac{\alpha^2-1}{\beta^2-\alpha^2}T^2 = U^2 \\
    (X:Y_1:Y_2:Z) & \longmapsto & (Y_1:Y_2:X-Z) \\
    \left(\frac{\beta^2 S^2-\alpha^2 T^2}{\beta^2-\alpha^2}:SU:TU:\frac{S^2-T^2}{\beta^2-\alpha^2}\right) & \longmapsfrom & (S:T:U)
\end{array}
.
\]
It is well known that a quadratic curve $\frac{\beta^2-1}{\beta^2-\alpha^2}S^2 - \frac{\alpha^2-1}{\beta^2-\alpha^2}T^2 = U^2$ is isomorphic to $\mathbb{P}^1$ by a map
\[{\footnotesize
\begin{array}{ccc}
    \frac{\beta^2-1}{\beta^2-\alpha^2}S^2 - \frac{\alpha^2-1}{\beta^2-\alpha^2}T^2 = U^2 & \longrightarrow & \mathbb{P}^1 \\
    (S:T:U) & \longmapsto & \left(\sqrt{\frac{\beta^2-1}{\beta^2-\alpha^2}}S + \sqrt{\frac{\alpha^2-1}{\beta^2-\alpha^2}}T:U\right)\\
    \left(\sqrt{\frac{\beta^2-\alpha^2}{\beta^2-1}}(V^2+W^2):\sqrt{\frac{\beta^2-\alpha^2}{\alpha^2-1}}(V^2-W^2):2VW\right) & \longmapsfrom & (V:W)
\end{array}
.}
\]
By considering the composition \textcolor{black}{of the above two maps}, we obtain a degree-$2$ map $\hat{x}\colon E_1'\times_{\mathbb{P}^1}E_2'\to\mathbb{P}^1$. From a direct calculation, the image points of Weierstrass points of $C$ under $C \to E_1'\times_{\mathbb{P}^1}E_2' \to (E_1'\times_{\mathbb{P}^1}E_2')/\langle s' \rangle$ are $(0:\pm\alpha:\pm\beta:1)$ and $(1:\pm 1:\pm 1:0)$. Therefore, the image points of Weierstrass points under $\hat{x}$ are\textcolor{black}{
\begin{align*}
\pm \alpha\sqrt{\frac{\beta^2-1}{\beta^2-\alpha^2}} \pm \beta \sqrt{\frac{\alpha^2-1}{\beta^2-\alpha^2}}& =\pm  \frac{\sqrt{\alpha_2}-\sqrt{\alpha_3}}{\sqrt{\alpha_2+\alpha_3-\beta_2-\beta_3}} \pm \frac{\sqrt{\beta_2}-\sqrt{\beta_3}}{\sqrt{\alpha_2+\alpha_3-\beta_2-\beta_3}},\\
\pm \sqrt{\frac{\beta^2-1}{\beta^2-\alpha^2}} \pm \sqrt{\frac{\alpha^2-1}{\beta^2-\alpha^2}}& =\pm  \frac{\sqrt{\alpha_2}+\sqrt{\alpha_3}}{\sqrt{\alpha_2+\alpha_3-\beta_2-\beta_3}} \pm \frac{\sqrt{\beta_2}+\sqrt{\beta_3}}{\sqrt{\alpha_2+\alpha_3-\beta_2-\beta_3}}.
\end{align*}
}\textcolor{black}{We denote these eight values by $x_1,\ldots,x_8$.} Then, the \textcolor{black}{target} curve $C$ is defined by $y^2=(x-x_1)\cdots (x-x_8)$. This completes the proof of Theorem \ref{thm:Howe curve hyper elliptic}.
\end{proof}

\textcolor{black}{
As a particular case of Theorem \ref{thm:Howe curve hyper elliptic}, we obtain the equation \eqref{eq:Howe hyper equation} for $H$ as
\begin{align}\label{eq:Howe hyper equation_Legendre}
\begin{aligned}
y^2=&\left(x^2-\frac{(1+\sqrt{\mu})^2(1+\sqrt{\mu\lambda})^2}{(1-\mu\lambda)(1-\mu)}\right)
\left(x^2-\frac{(1-\sqrt{\mu})^2(1+\sqrt{\mu\lambda})^2}{(1-\mu\lambda)(1-\mu)}\right)\\
&\left(x^2-\frac{(1+\sqrt{\mu})^2(1-\sqrt{\mu\lambda})^2}{(1-\mu\lambda)(1-\mu)}\right)\left(x^2-\frac{(1-\sqrt{\mu})^2(1-\sqrt{\mu\lambda})^2}{(1-\mu\lambda)(1-\mu)}\right),
\end{aligned}
\end{align}
when $E_1$ and $E_2$ are given by \eqref{eq:OortCurve}.
With this equation, we obtain} a result similar to Ohashi's one in \cite{Ohashi1} on the $\mathbb{F}_{p^2}$-maximality and the $\mathbb{F}_{p^2}$-minimality of hyperelliptic Howe curves of genus $3$:

\begin{corollary}[cf.\ {\cite[Theorem 1.1]{Ohashi1}}]\label{cor:max}
Assume that $E_1$ and $E_2$ are given by \eqref{eq:OortCurve} with $\nu = \mu^2 \lambda$.
Let $C$ be a hyperelliptic Howe curve $C$ of genus $3$ given in Theorem \ref{thm:Howe curve hyper elliptic}; put the equation \eqref{eq:Howe hyper equation_Legendre} as $y^2= \prod_{i=1}^4(x^2-A_i)$. 
If $C$ is superspecial, then each $A_i$ belongs to $\mathbb{F}_{p^2}$, and moreover it is $\mathbb{F}_{p^2}$-maximal (resp.\ $\mathbb{F}_{p^2}$-minimal) if $p \equiv 3 \pmod{4}$ (resp.\ $p \equiv 1 \pmod{4}$).
\end{corollary}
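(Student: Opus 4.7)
The plan has two stages: first show $A_i\in \mathbb{F}_{p^2}$ by combining the isogeny decomposition of $J(C)$ with the $\mathbb{F}_{p^2}$-descent of superspecial curves; then derive the $\mathbb{F}_{p^2}$-maximality/minimality from a Frobenius eigenvalue computation on the resulting model. By Theorem \ref{thm:completely decomposed Richelot}, there is a completely decomposed Richelot isogeny $J(C)\to E_1\times E_2\times E_3$, where $E_3 := C/\langle \sigma\tau\rangle$ is birational to the elliptic curve $y^2=(x-1)(x-\nu)(x-\mu)(x-\mu\lambda)$. Under the superspecial assumption, $J(C)\cong E_0^{3}$ for some supersingular elliptic curve $E_0$, and in particular each of $E_1, E_2, E_3$ is supersingular.

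For the first assertion, I would start from the classical fact that the Legendre parameter of a supersingular elliptic curve over $\overline{\mathbb{F}_p}$ lies in $\mathbb{F}_{p^2}$ (being a root of the Hasse--Deuring polynomial, whose coefficients are in $\mathbb{F}_p$). Applied to $E_1=E_\nu$ and $E_2\cong E_\lambda$ this gives $\nu,\lambda\in \mathbb{F}_{p^2}$, and combined with the hyperellipticity relation $\nu=\mu^{2}\lambda$ forces $\mu^{2}\in \mathbb{F}_{p^2}$. The crucial upgrade is $\sqrt{\mu},\sqrt{\mu\lambda}\in \mathbb{F}_{p^2}$, after which a direct inspection of \eqref{eq:Howe hyper equation_Legendre} immediately gives $A_i\in \mathbb{F}_{p^2}$ for each $i$. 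For this upgrade I would use Ekedahl's descent theorem to realize $C$ together with its distinguished $V_{4}\subset \mathrm{Aut}(C)$ over $\mathbb{F}_{p^2}$: the hyperelliptic model $y^{2}=\prod_i(x^{2}-A_i)$ is then itself defined over $\mathbb{F}_{p^2}$, so the set $\{A_1,\ldots,A_4\}$ is $\mathrm{Gal}(\overline{\mathbb{F}_p}/\mathbb{F}_{p^2})$-stable. Tracking how Frobenius permutes the $A_i$ through its action on the two sign choices $(\pm\sqrt{\mu},\pm\sqrt{\mu\lambda})$ (the $A_i$'s being distinguished pairwise by the $V_4$-action via the identification of the pairs $\pm\sqrt{A_i}$ with Weierstrass points of the three quotients $E_j$) shows that Frobenius fixes each $A_i$ individually, not merely the unordered set.

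For the extremality statement, using the resulting $\mathbb{F}_{p^2}$-model, $J(C)/\mathbb{F}_{p^2}$ is isogenous to $E_0^{3}$ with $E_0/\mathbb{F}_{p^2}$ supersingular, so the Frobenius eigenvalues on $J(C)$ are all equal to a single $\alpha\in \{\pm p\}$. The sign is dictated by the $\mathbb{F}_{p^2}$-isomorphism class of $E_0$: for $p\equiv 3\pmod 4$ the canonical supersingular curve $y^{2}=x^{3}-x$ is already defined over $\mathbb{F}_p$ with Frobenius satisfying $\pi^{2}=-p$, so over $\mathbb{F}_{p^2}$ one has $\alpha=-p$, giving the point count $\#C(\mathbb{F}_{p^2})=p^{2}+1+6p$, i.e.\ $\mathbb{F}_{p^2}$-maximality; for $p\equiv 1\pmod 4$ the analogous model yields $\alpha=+p$ and hence $\mathbb{F}_{p^2}$-minimality.

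The main obstacle is the rationality step $\sqrt{\mu},\sqrt{\mu\lambda}\in \mathbb{F}_{p^2}$: mere supersingularity of the three factors only controls $\mu$ up to $\mathbb{F}_{p^4}$, and it is the strictly stronger superspeciality of $C$—together with the geometric identification of the $A_i$ provided by the Richelot decomposition—that is needed to individuate the four otherwise Galois-conjugate candidates inside $\{A_1,\ldots,A_4\}$. This is the point at which any concrete proof must engage with the precise interaction between the polarization on $J(C)$ and the fiber-product structure of $C$.
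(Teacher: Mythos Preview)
Your approach has a genuine gap at exactly the point you yourself flag as ``the main obstacle.'' The paper's proof of $A_i\in\mathbb{F}_{p^2}$ is elementary and avoids Ekedahl descent entirely, because it uses a sharper input than the one you invoke: by \cite[Proposition~3.1]{AT02}, the Legendre parameter of a supersingular elliptic curve is not merely in $\mathbb{F}_{p^2}$ but is a \emph{fourth power} in $\mathbb{F}_{p^2}$. Applied to $E_1=E_\nu$ and $E_2\cong E_\lambda$ this gives $\nu,\lambda\in(\mathbb{F}_{p^2}^{\times})^4$, and then $\mu^2=\nu/\lambda$ forces $\mu\in(\mathbb{F}_{p^2}^{\times})^2$; hence $\sqrt{\mu},\sqrt{\mu\lambda}\in\mathbb{F}_{p^2}$ and every $A_i$ in \eqref{eq:Howe hyper equation_Legendre} lies in $\mathbb{F}_{p^2}$. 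No descent, no Galois tracking of the $A_i$, and no use of the polarization is needed. Your weaker input ``Legendre parameter $\in\mathbb{F}_{p^2}$'' is what forces you into the Ekedahl route, and that route is not completed: Ekedahl gives \emph{some} $\mathbb{F}_{p^2}$-model of $C$, not the specific model $y^2=\prod_i(x^2-A_i)$, and your sketch for why Frobenius fixes each $A_i$ individually (rather than permuting them) is not an argument.

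The extremality step also has a gap. From ``$J(C)$ is isogenous to $E_0^3$ with $E_0$ supersingular'' you cannot conclude that all Frobenius eigenvalues on the given $\mathbb{F}_{p^2}$-model of $J(C)$ equal a single $\alpha\in\{\pm p\}$: over $\mathbb{F}_{p^2}$ both the maximal and the minimal twist of $E_0$ exist, and a priori the three factors could carry different signs. The paper instead uses that the Richelot isogeny $J(C)\to E_1\times E_2\times E_3$ is defined over $\mathbb{F}_{p^2}$ for the explicit Legendre models $E_\nu$, $E_\lambda$, $E_{\lambda'}$, and then applies \cite[Proposition~2.2]{AT02}, which says precisely that a supersingular Legendre curve over $\mathbb{F}_{p^2}$ is $\mathbb{F}_{p^2}$-maximal when $p\equiv 3\pmod 4$ and $\mathbb{F}_{p^2}$-minimal when $p\equiv 1\pmod 4$. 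That pins down the sign on each factor simultaneously, and the conclusion for $C$ follows.
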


\begin{proof}
Assume that $C$ is superspecial.
Then $E_1$ and $E_2$ are both supersingular since the $p$-kernel $J(C)[p]$ is isomorphic to $J(E_1)[p] \times J(E_2)[p] \times J(E_3)[p]$, where $E_3$ is a genus-$1$ curve defined by {$y^2=(x-1)(x-\nu)(x-\mu)(x-\mu\lambda)$}.
Since the Legendre form of $E_2$ is $y^2=x(x-1)(x-\lambda)$, it follows from \cite[Proposition 3.1]{AT02} that $\nu$ and $\lambda$ are the $4$-th powers of elements in $\mathbb{F}_{p^2}$.
By $\mu^2 = \nu / \lambda$, we may assume that $\mu$ is the square of an element in $\mathbb{F}_{p^2}$.
Therefore $\sqrt{\mu}$ and $\sqrt{\mu \lambda}$ in the equation \eqref{eq:Howe hyper equation} for $C$ can be taken as elements in $\mathbb{F}_{p^2}$, which shows the first argument.
Here, $E_1$, $E_2$, and $E_3$ are $\mathbb{F}_{p^2}$-isomorphic to $y^2 =x (x-1)(x-\nu)$, $y^2 = x (x-1)(x-\lambda)$, and $y^2 = x (x-1)(x-\lambda')$ with
\begin{equation}\label{eq:lambda_prime}
\lambda' = \frac{(\mu \lambda - 1)(\mu-\nu)}{(\mu \lambda - \nu)(\mu-1)},
\end{equation}
and it follows also from \cite[Proposition 2.2]{AT02} that they are both $\mathbb{F}_{p^2}$-maximal (resp.\ $\mathbb{F}_{p^2}$-minimal) if $p \equiv 3 \pmod{4}$ (resp.\ $p \equiv 1 \pmod{4}$).
Since $C$ is $\mathbb{F}_{p^2}$-maximal (resp.\ $\mathbb{F}_{p^2}$-minimal) if and only if $E_1$, $E_2$, and $E_3$ are both $\mathbb{F}_{p^2}$-maximal (resp.\ $\mathbb{F}_{p^2}$-minimal), we have proved the second argument.
\end{proof}



\if 0
\begin{proposition}\label{prop:Howe curve hyper genus 2}
Let $E$ be a elliptic curve defined over $k$, and $\tilde{C}$ a hyperelliptic curve of genus $2$ defined over $k$ such that
\[
E \colon y^2=f(x),\quad \tilde{C}\colon y^2=xf(x),
\]
where $f$ is a quartic polynomial over $k$ with non-zero discriminant.
Then, a hyperelliptic Howe curve $C$ of genus $3$ associated with $E\times_{\mathbb{P}^1}\tilde{C}$ is given by $y^2=f(x^2)$.
\end{proposition}
\begin{proof}
See Lemma \ref{lem:fiber}.
\end{proof}
\fi

\textcolor{black}{From} Lemma \ref{lem:V4} together with Theorem \ref{thm:Howe curve hyper elliptic}, we can immediately prove the following corollary which asserts that any hyperelliptic genus-$3$ curve of Oort type is of Howe type:

\begin{corollary}\label{cor:OortTypeIsHoweType}
Let $C$ be a hyperelliptic Howe curve of genus $3$ associated with $E_1 \times_{\mathbb{P}^1} E_2$.
Then $C$ is birational to $E \times_{\mathbb{P}^1} H$ for some genus-$1$ and genus-$2$ curves $E$ and $H$.
If $E_1$ and $E_2$ are given as in \eqref{eq:OortCurve}, then $E$ and $H$ are given respectively by $y^2=f(x)$ and $y^2=xf(x)$, where $f(x)$ is the unique polynomial such that $f(x^2)$ is equal to the right hand side of \eqref{eq:Howe hyper equation}
\end{corollary}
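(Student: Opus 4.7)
The plan is to combine Theorem \ref{thm:Howe curve hyper elliptic} with Lemma \ref{lem:C/sigma hyper} (equivalently, the hyperelliptic part of Lemma \ref{lem:V4}). The key observation is that the equation \eqref{eq:Howe hyper equation} produced by Theorem \ref{thm:Howe curve hyper elliptic} is a polynomial in $x^2$, so $C$ already admits the two commuting involutions $(x,y)\mapsto(-x,y)$ and $(x,y)\mapsto(-x,-y)$ needed to apply Lemma \ref{lem:C/sigma hyper}.

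More precisely, I would proceed as follows. First, specialize Theorem \ref{thm:Howe curve hyper elliptic} to the Legendre parameters \eqref{eq:OortCurve} with $\nu=\mu^2\lambda$ (or just use the general form of \eqref{eq:Howe hyper equation}). The right-hand side of \eqref{eq:Howe hyper equation} is then of the shape $\prod_{i=1}^4(x^2-A_i)$, i.e.\ it is a quartic in $x^2$; define $f(x)$ to be the unique polynomial with $f(x^2)=\prod_{i=1}^4(x^2-A_i)$, so that $C$ is represented by $y^2=f(x^2)$. Since the $A_i$ are pairwise distinct (the $2g+2=8$ Weierstrass points of $C$ are distinct) and non-zero (no $A_i=0$ appears in \eqref{eq:Howe hyper equation} under the genericity hypotheses $\alpha_2,\alpha_3,\beta_2,\beta_3$ distinct and non-zero), $f$ is square-free of degree $g+1=4$ with $f(0)\neq 0$, which matches the hypotheses of Lemma \ref{lem:C/sigma hyper}.

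Next, I would invoke the converse (second half) of Lemma \ref{lem:C/sigma hyper} directly: with $f$ as above, the normalization of the fiber product $E\times_{\mathbb{P}^1}H$ for
\[
E\colon y^2=f(x), \qquad H\colon y^2=xf(x)
\]
is a hyperelliptic curve isomorphic to $y^2=f(x^2)$, which is $C$. Thus $C$ is birational to $E\times_{\mathbb{P}^1}H$, a Howe curve of the ``Howe type'' described at the beginning of Section \ref{sec:main2}. This gives both statements of the corollary, and the explicit description of $E$ and $H$ in the Legendre case follows by substituting the parameters from \eqref{eq:OortCurve} into \eqref{eq:Howe hyper equation}.

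Since all the nontrivial work is already contained in Theorem \ref{thm:Howe curve hyper elliptic} and Lemma \ref{lem:C/sigma hyper}, there is no real obstacle; the only point that merits a sentence is the verification that $f$ as defined is square-free and satisfies $f(0)\neq 0$, so that Lemma \ref{lem:C/sigma hyper} applies. This follows from the fact that $C$ is smooth (so its $8$ Weierstrass points, i.e.\ the roots of $f(x^2)$, are distinct), and the fact that none of the eight values $x_1,\dots,x_8$ computed in the proof of Theorem \ref{thm:Howe curve hyper elliptic} is zero under the hypothesis $\alpha_2\alpha_3=\beta_2\beta_3\neq 0$ with the four parameters pairwise distinct.
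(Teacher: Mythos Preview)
Your proof is correct and follows essentially the same route as the paper, which states (just before the corollary) that it follows immediately from Lemma \ref{lem:V4} together with Theorem \ref{thm:Howe curve hyper elliptic}; the commented-out proof in the source likewise invokes Theorem \ref{thm:Howe curve hyper elliptic} to write $C$ as $y^2=f(x^2)$ and then applies the two halves of Lemma \ref{lem:C/sigma hyper}. Your extra sentence verifying that $f$ is square-free with $f(0)\neq 0$ is a welcome check the paper leaves implicit.
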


\if 0
\begin{proof}
By Theorem \ref{thm:Howe curve hyper elliptic}, we may assume that $C$ is given by $y^2=g(x)=f(x^2)$ as in \eqref{eq:Howe hyper equation}.
Since $C$ has an automorphism $\sigma : (x,y) \mapsto (-x,y)$, it follows from Lemma \ref{lem:C/sigma hyper} that the quotient curves $E :=C / \langle \sigma \rangle$ and $C':=C/\langle \sigma \circ \iota_{C} \rangle$ are hyperelliptic curves of genera $1$ and $2$ given by $y^2 =g(x)$ and $y^2 =x g(x)$ respectively.
Applying Lemma \ref{lem:fiber} completes the proof.
\end{proof}
\fi

\subsection{Non-hyperelliptic case}\label{subsec:OortNonHyp}

\textcolor{black}{Let $C$ be a genus-$3$ Howe curve of Oort type associated with genus-$1$ curves $E_1$ and $E_2$ given by \eqref{eq:Oort0}.
Recall that any non-hyperelliptic curve of genus $3$ is canonically embedded into $\mathbb{P}^2$ as a non-singular plane quartic.
The aim of this subsection is to find an explicit quartic equation defining a non-hyperelliptic $C$.
We shall deduce such a quartic by computing a resultant from equations for $E_1$ and $E_2$.
Note that this idea can be applied to higher genus cases, see Remark \ref{rem:general} below.}
\textcolor{black}{
\if 0
First, transforming the $2$ common Weierstrass points to $0$ and $\infty$ by an element of $\mathrm{PGL}_2(k)$, we may assume that $E_1$, $E_2$, and $E_3$ are given as follows:
\begin{eqnarray*}
    E_1 & : & y_1^2 = \phi_1 (x) = x(x-\alpha_2)(x-\alpha_3),\\
    E_2 & : & y_2^2 = \phi_2 (x) = x(x-\beta_2)(x-\beta_3),\\
    E_3 & : & y_3^2 = \phi_3 (x) = (x-\alpha_2)(x-\alpha_3)(x-\beta_2)(x-\beta_3).
\end{eqnarray*}
where $\alpha_2$, $\alpha_3$, $\beta_2$, and $\beta_3$ are pairwise distinct elements in $k \smallsetminus \{ 0 \}$.
\fi
We set
\[
\begin{aligned}
Q_1 &:= (x-\alpha_2)(x-\alpha_3) = x^2 - \tau_1 x + \tau_2,\\
Q_2 &:= (x-\beta_2)(x-\beta_3) = x^2 - \rho_1 x + \rho_2,
\end{aligned}
\]
where $\tau_1 := \alpha_2 + \alpha_3$, $\tau_2 := \alpha_2 \alpha_3$, $\rho_1 := \beta_2 + \beta_3$, and $\rho_2 := \beta_2\beta_3$.
Putting $Y := y_1/x$ and $Z := y_2/x$ and squaring both sides, we obtain $x Y^2 = Q_1$ and $x Z^2 = Q_2$.
Put
\[
\begin{aligned}
f_1 &:= Q_1 - x Y^2 = x^2 + (- \tau_1 - Y^2) x  + \tau_2,\\
f_2 &:= Q_2 - x Z^2 = x^2 + (- \rho_1 - Z^2) x + \rho_2,\\
f &:= \mathrm{Res}_x (f_1,f_2) \in k[Y,Z].
\end{aligned}
\]
A straightforward computation shows that
\begin{equation}\label{eq:Howe non-hyper equation}
\begin{aligned}
    f &= \rho_2 Y^4 + (-\tau_2 - \rho_2) Y^2 Z^2 + \tau_2 Z^4 \\
& + (2\tau_1 \rho_2 - \tau_2 \rho_1 - \rho_1 \rho_2) Y^2 + (- \tau_1 \tau_2 - \tau_1 \rho_2 + 2 \tau_2 \rho_1)Z^2 \\
& + \tau_1^2\rho_2 - \tau_1 \tau_2 \rho_1 - \tau_1 \rho_1 \rho_2 + \tau_2^2 + \tau_2 \rho_1^2 - 2 \tau_2 \rho_2 + \rho_2^2,
\end{aligned}
\end{equation}
which is a quartic both in $Y$ and $Z$ by $\rho_2, \tau_2 \neq 0$ from $\alpha_2,\alpha_3,\beta_2,\beta_3 \neq 0$.
}
\if 0
\begin{lemma}\label{lem:a2a3}
    The curve $C$ is hyperelliptic if and only if $\alpha_2 \alpha_3 = \beta_2 \beta_3$.
\end{lemma}

\begin{proof}
This follows from Oort's criterion together with
\[
\nu - \mu^2 \lambda = \frac{\alpha_3}{\alpha_2} - \frac{\beta_2^2}{\alpha_2^2} \cdot \frac{\beta_3}{\beta_2} = \frac{\alpha_2 \alpha_3 - \beta_2 \beta_3}{\alpha_2^2}.
\]
\end{proof}
\fi
\begin{lemma}\label{lem:bij}
\textcolor{black}{
    With notation as above, we denote by $b_{ij}$ the coefficient of $Y^i Z^j$ in $f$.
    Then we have the following:
    \begin{align*}
           & b_{40} = \rho_2 = \beta_2 \beta_3, \quad b_{22} = - \tau_2 - \rho_2= - (\alpha_2 \alpha_3 + \beta_2 \beta_3), \quad b_{04} = \tau_2 = \alpha_2 \alpha_3 ,  \\
   & b_{20} = 2\tau_1 \rho_2 - \tau_2 \rho_1 - \rho_1 \rho_2, \qquad
        b_{02} = -\tau_1 \tau_2 - \tau_1 \rho_2 + 2 \tau_2 \rho_1, \\
      &  b_{00} = \Pi_{i, j} (\alpha_i - \beta_j) = (\alpha_1 - \beta_1)(\alpha_2 - \beta_1)(\alpha_1 - \beta_2)(\alpha_2 - \beta_2),\\
      & b_{22}^2 - 4 b_{40} b_{04} = (\tau_2 - \rho_2)^2 = (\alpha_2 \alpha_3 - \beta_2 \beta_3)^2,\\
      & b_{20}^2 - 4 b_{40} b_{00} = (\rho_1^2 - 4 \rho_2)(\tau_2 - \rho_2)^2 =(\beta_2-\beta_3)^2 (\alpha_2 \alpha_3 - \beta_2 \beta_3)^2,\\
     & b_{02}^2 - 4 b_{04} b_{00} = (\tau_1^2 - 4 \tau_2)(\tau_2 - \rho_2)^2 =(\alpha_2-\alpha_3)^2 (\alpha_2 \alpha_3 - \beta_2 \beta_3)^2,\\
     & b_{20}^2 b_{04}^2 - b_{20} b_{22} b_{02} + b_{02}^2 b_{40} + b_{22}^2 b_{00} - 4 b_{40} b_{00} b_{04} = (\tau_2 - \rho_2)^4 =  (\alpha_2 \alpha_3 - \beta_2 \beta_3)^4.
    \end{align*}
    \if 0
    \begin{align}
        b_{40} &= \rho_2 = \beta_2 \beta_3,\\
        b_{22} &= - \tau_2 - \rho_2= - (\alpha_2 \alpha_3 + \beta_2 \beta_3),\\
        b_{04} &= \tau_2 = \alpha_2 \alpha_3 , \\
        b_{20} &= 2\tau_1 \rho_2 - \tau_2 \rho_1 - \rho_1 \rho_2, \\
        b_{02} &= -\tau_1 \tau_2 - \tau_1 \rho_2 + 2 \tau_2 \rho_1, \\
        b_{00} &= \Pi_{i, j} (\alpha_i - \beta_j) = (\alpha_1 - \beta_1)(\alpha_2 - \beta_1)(\alpha_1 - \beta_2)(\alpha_2 - \beta_2),\\
        b_{22}^2 - 4 b_{40} b_{04} &= (\tau_2 - \rho_2)^2 = (\alpha_2 \alpha_3 - \beta_2 \beta_3)^2,\\
        b_{20}^2 - 4 b_{40} b_{00} &= (\rho_1^2 - 4 \rho_2)(\tau_2 - \rho_2)^2 =(\beta_2-\beta_3)^2 (\alpha_2 \alpha_3 - \beta_2 \beta_3)^2,\\
         b_{02}^2 - 4 b_{04} b_{00} &= (\tau_1^2 - 4 \tau_2)(\tau_2 - \rho_2)^2 =(\alpha_2-\alpha_3)^2 (\alpha_2 \alpha_3 - \beta_2 \beta_3)^2,\\
    \end{align}
    \fi
    \if 0
    and
    \begin{equation}
    \begin{split}
        & b_{20}^2 b_{04}^2 - b_{20} b_{22} b_{02} + b_{02}^2 b_{40} + b_{22}^2 b_{00} - 4 b_{40} b_{00} b_{04} \\
        & = (\tau_2 - \rho_2)^4 =  (\alpha_2 \alpha_3 - \beta_2 \beta_3)^4.
    \end{split}
    \end{equation}
    \fi
    Therefore, $b_{40}$, $b_{04}$, and $b_{00}$ are always non-zero.
    Moreover, if $C$ is non-hyperelliptic (i.e., $\alpha_2 \alpha_3 \neq \beta_2 \beta_3$), then the last four values are all non-zero.}
\end{lemma}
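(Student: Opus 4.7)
The statement is really a catalogue of polynomial identities together with a non-vanishing check, so the plan is to carry out the resultant calculation once and read off everything else.

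\emph{Step 1 (compute $f$ directly).} I would use the product-of-roots form of the resultant: if $r_1,r_2$ are the roots of $f_1$ (regarded in $k[Y][x]$) and we set $\sigma_1=r_1+r_2=\tau_1+Y^{2}$, $\sigma_2=r_1r_2=\tau_2$, then
\[
f=f_{2}(r_1)f_{2}(r_2)=(\sigma_2-\rho_2)^{2}+(-\rho_1-Z^{2})\sigma_1(\sigma_2+\rho_2)+\rho_2\sigma_1^{2}+(-\rho_1-Z^{2})^{2}\sigma_2.
\]
Substituting for $\sigma_1,\sigma_2$ and collecting monomials in $Y,Z$ reproduces \eqref{eq:Howe non-hyper equation} verbatim, and the values of $b_{40},b_{22},b_{04},b_{20},b_{02}$ are read off directly. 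For $b_{00}$ I would specialise $Y=Z=0$: then $f_1=Q_1$, $f_2=Q_2$, and the classical identity $\mathrm{Res}_x(Q_1,Q_2)=\prod_{i,j}(\alpha_i-\beta_j)$ gives the product formula; equating this to the polynomial expression obtained by setting $Y=Z=0$ in the previous paragraph gives the closed form stated for $b_{00}$.

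\emph{Step 2 (derived identities).} Each of the three ``quadratic'' identities is a short substitution:
\[
(-\tau_2-\rho_2)^{2}-4\tau_2\rho_2=(\tau_2-\rho_2)^{2},
\]
and writing $b_{20}=2\tau_1\rho_2-\rho_1(\tau_2+\rho_2)$ one checks that in $b_{20}^{2}-4b_{40}b_{00}$ all terms involving $\tau_1$ cancel, leaving the factorisation $(\rho_1^{2}-4\rho_2)(\tau_2-\rho_2)^{2}=(\beta_2-\beta_3)^{2}(\tau_2-\rho_2)^{2}$. The identity for $b_{02}^{2}-4b_{04}b_{00}$ follows by the symmetry $(\tau_i,\alpha_\bullet)\leftrightarrow(\rho_i,\beta_\bullet)$, which visibly preserves the construction of $f$. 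The final identity is most transparent when one notices that it is (up to sign and a factor of $4$) the determinant of the symmetric $3\times 3$ matrix of the conic
\[
g(U,V):=b_{40}U^{2}+b_{22}UV+b_{04}V^{2}+b_{20}U+b_{02}V+b_{00}
\]
obtained from $f$ under $U=Y^{2},V=Z^{2}$; this cast gives a clean expansion yielding $(\tau_2-\rho_2)^{4}$.

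\emph{Step 3 (non-vanishing).} The hypotheses force $\alpha_2,\alpha_3,\beta_2,\beta_3$ to be non-zero and pairwise distinct, hence $b_{40}=\beta_2\beta_3\neq 0$, $b_{04}=\alpha_2\alpha_3\neq 0$, and $b_{00}=\prod_{i,j}(\alpha_i-\beta_j)\neq 0$. If in addition $C$ is non-hyperelliptic, Oort's criterion recalled just above Lemma \ref{lem:bij} gives $\alpha_2\alpha_3\neq \beta_2\beta_3$, i.e.\ $\tau_2\neq\rho_2$; combined with $\alpha_2\neq\alpha_3$ and $\beta_2\neq\beta_3$, the four factorisations in Step 2 show that each of the four quantities is non-zero.

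\emph{Expected obstacle.} Nothing in the plan is conceptually hard; the only bookkeeping hurdle is the large final identity, and the conic-discriminant viewpoint above is what I would use to avoid an opaque brute-force expansion. This viewpoint also explains geometrically why the right-hand side is a fourth power of $\tau_2-\rho_2$: degeneracy of the conic $g(U,V)$ is exactly the hyperelliptic locus $\tau_2=\rho_2$, so the conic discriminant must vanish to the right order there.
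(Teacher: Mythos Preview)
Your proposal is correct and considerably more detailed than the paper's own proof, which consists of the single sentence ``This is proved by a straightforward computation.'' Your organisation via the product-of-roots form of the resultant, the $(\tau_i,\alpha_\bullet)\leftrightarrow(\rho_i,\beta_\bullet)$ symmetry, and especially the conic-discriminant interpretation of the last identity are all sound and add structure the paper does not supply.

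One heads-up: when you actually expand the $3\times 3$ determinant of the conic $g(U,V)=b_{40}U^{2}+b_{22}UV+b_{04}V^{2}+b_{20}U+b_{02}V+b_{00}$, the first term you obtain will be $b_{20}^{2}b_{04}$, not $b_{20}^{2}b_{04}^{2}$ as printed in the statement. The printed exponent appears to be a typo (the symmetry with the companion term $b_{02}^{2}b_{40}$ already suggests it, and a numerical example such as $(\tau_1,\tau_2,\rho_1,\rho_2)=(3,2,7,5)$ confirms it). With that correction, the expression is exactly $-4$ times the determinant of the conic's symmetric matrix and equals $(\tau_{2}-\rho_{2})^{4}$, so your plan goes through cleanly.
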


\begin{proof}
    \textcolor{black}{
    This is proved by a straightforward computation.}
\end{proof}

\textcolor{black}{
Here, we assume that $C$ is non-hyperelliptic, i.e., $\alpha_2 \alpha_3 \neq \beta_2 \beta_3$.}

\begin{proposition}\label{prop:irred}
    \textcolor{black}{
    With notation as above, $f$ is absolutely irreducible.}
\end{proposition}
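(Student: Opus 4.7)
The plan is to exhibit an explicit birational map from the fiber product $W := E_1 \times_{\mathbb{P}^1} E_2$ onto the plane curve $V(f) \subset \mathbb{A}^2_{Y,Z}$, and then use the fact that its normalization $C$ has genus $3$ to rule out any non-trivial factorization of $f$.

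First I would verify that $W$ is geometrically integral. Because $\phi_1(x)$ and $\phi_2(x)$ are squarefree cubics with pairwise distinct roots $\{0,\alpha_2,\alpha_3\}$ and $\{0,\beta_2,\beta_3\}$, none of $\phi_1$, $\phi_2$, or $\phi_1\phi_2$ is a square in $k(x)$, so $k(x)(\sqrt{\phi_1},\sqrt{\phi_2})/k(x)$ is a degree-$4$ field extension and the coordinate ring $k[x,y_1,y_2]/(y_1^2-\phi_1,\, y_2^2-\phi_2)$ of $W$ is an integral domain. By Proposition \ref{Katsura-Takashima}, its normalization $C$ has genus $3$.

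Next I would study the rational map $\phi : W \dashrightarrow V(f)$ defined by $(x,y_1,y_2) \mapsto (y_1/x,\, y_2/x)$. By construction of $f$ as $\mathrm{Res}_x(f_1,f_2)$, the image lies in $V(f)$. The key observation for invertibility is that
\[
f_1 - f_2 \;=\; (Z^2 - Y^2 + \rho_1 - \tau_1)\, x + (\tau_2 - \rho_2)
\]
is linear in $x$, with constant term $\tau_2 - \rho_2 = \alpha_2\alpha_3 - \beta_2\beta_3 \neq 0$ by the non-hyperelliptic hypothesis. Solving yields $x = (\rho_2 - \tau_2)/(Z^2 - Y^2 + \rho_1 - \tau_1)$ on the open subset where the denominator is non-zero, and $y_1 = xY$, $y_2 = xZ$ recover the other coordinates. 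Hence $\phi$ is birational onto its image.

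Finally, let $g$ be the irreducible factor of $f$ whose vanishing locus contains the closure of $\phi(W)$. Since $\phi(W)$ is one-dimensional and irreducible, $\phi$ is birational from $W$ onto $V(g)$, so the normalization of $V(g)$ is $C$, of geometric genus $3$. The geometric genus of an irreducible plane curve of degree $d$ is bounded above by the arithmetic genus $\binom{d-1}{2}$, so $\binom{\deg g - 1}{2} \geq 3$ forces $\deg g \geq 4$. But $f$ has total degree exactly $4$ (its degree-$4$ part $\rho_2 Y^4 - (\tau_2+\rho_2) Y^2 Z^2 + \tau_2 Z^4$ is nonzero by Lemma \ref{lem:bij}), so $\deg g = 4 = \deg f$ and $f$ is a scalar multiple of $g$, hence absolutely irreducible. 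The main obstacle is locating the linear-in-$x$ combination $f_1-f_2$ that produces the rational inverse; once this observation is in hand, the genus comparison finishes the argument in a single step.
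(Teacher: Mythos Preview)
Your proof is correct and takes a genuinely different route from the paper's.

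The paper argues by a direct algebraic case analysis: appendix Lemmas \ref{lem:linear} and \ref{lem:Q} exploit the $Y\mapsto -Y$, $Z\mapsto -Z$ symmetry of $f$ to show that any linear factor, or any factorization $f=b_{40}Q_1Q_2$ into quadratics, forces one of a short list of discriminant-type identities among the $b_{ij}$. Each of those identities is then checked against the explicit non-vanishing formulas in Lemma \ref{lem:bij} under the hypothesis $\alpha_2\alpha_3\neq\beta_2\beta_3$, and every case is ruled out.

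Your argument is geometric instead: you construct the birational map $\phi:W\dashrightarrow V(f)$ and its rational inverse (using the linear-in-$x$ relation $f_1-f_2$ and the hypothesis $\tau_2-\rho_2\neq 0$), identify the normalization of the image component with $C$ via Proposition \ref{Katsura-Takashima}, and then invoke the genus--degree bound $\binom{d-1}{2}\geq 3$ to force $\deg g=4$. This is exactly the pair of maps $\Phi,\Psi$ that the paper builds in the proof of Theorem \ref{thm:Howe curve non-hyper2}, so in effect you have front-loaded half of that theorem and obtained irreducibility as a corollary, bypassing the appendix case analysis entirely. The paper's approach is self-contained and purely computational (no appeal to the genus of $C$), while yours is shorter, more conceptual, and explains \emph{why} irreducibility holds rather than merely verifying it.
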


\begin{proof}
\textcolor{black}{
    Assume for a contradiction that $f$ is reducible.
    We denote by $b_{ij}$ the coefficient of $Y^i Z^j$ in $f$.
    If $f$ has a linear factor, then it follows from Lemma \ref{lem:linear} that we have the system of equations \eqref{eq:system1}, which contradicts to the last assertion of Lemma \ref{lem:bij}. 
    Therefore, $f$ is factored into $f = b_{40} Q_1 Q_2$ for some monic irreducible quadratic polynomials $Q_1$ and $Q_2$, whence either of the four cases in Lemma \ref{lem:Q} holds.
    Among the four cases, the cases (I), (II), and (III) are impossible by Lemma \ref{lem:bij} together with our assumption $\alpha_2 \alpha_3 \neq \beta_2 \beta_3$.    
    The only possible case is (IV).
    In this case, it follows from Lemma \ref{lem:bij} that we can take $\alpha_2 \alpha_3 - \beta_2 \beta_3$ and $(\beta_2-\beta_3)(\alpha_2 \alpha_3 - \beta_2 \beta_3)$ as square roots of $b_{22}^2 - 4 b_{40} b_{04}$ and $b_{20}^2 - 4 b_{40} b_{00}$ respectively.
    Here, we have
    \[
    \begin{aligned}
        2b_{40} b_{02} - b_{22} b_{20} + \sqrt{b_{22}^2 - 4 b_{40} b_{04}} \sqrt{b_{20}^2 - 4 b_{40} b_{00}} = -2 \beta_3 (\alpha_2 \alpha_3 - \beta_2 \beta_3)^2,\\ 
        2b_{40} b_{02} - b_{22} b_{20} - \sqrt{b_{22}^2 - 4 b_{40} b_{04}} \sqrt{b_{20}^2 - 4 b_{40} b_{00}} =-2 \beta_2 (\alpha_2 \alpha_3 - \beta_2 \beta_3)^2, 
    \end{aligned}
    \]
    which are both non-zero by $\alpha_2 \alpha_3 \neq \beta_2 \beta_3$.
    This contradicts the equation (IV) in Lemma \ref{lem:Q}.}
\end{proof}

\textcolor{black}{
Moreover, we have the following theorem:}

\begin{theorem}\label{thm:Howe curve non-hyper2}
\textcolor{black}{
With notation as above, the projective closure $\tilde{C}$ of $f=0$ in $\mathbb{P}^2$ is non-singular, and $C$ is isomorphic to $\tilde{C}$.}
\end{theorem}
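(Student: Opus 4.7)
The plan is to construct a birational map $\phi\colon C \dashrightarrow \tilde{C}$ given on an open subset by $(x, y_1, y_2) \mapsto (Y, Z) := (y_1/x,\, y_2/x)$, then establish birationality via an explicit formula for the inverse, and finally deduce non-singularity of $\tilde{C}$ from a genus comparison. Once $\tilde{C}$ is known to be smooth, the birational map $\phi$ between smooth projective curves automatically extends to an isomorphism.

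To see that the image of $\phi$ lands in $\tilde{C}$, note that on $C$ the element $x$ is a common root (in the variable $x$) of $f_1(x,Y) = x^2 - (\tau_1 + Y^2)x + \tau_2$ and $f_2(x,Z) = x^2 - (\rho_1 + Z^2)x + \rho_2$, so the resultant $f(Y,Z)$ vanishes. For birationality, I would observe that in $k(C)$ one has
\[
x(Y^2 - Z^2) \;=\; Q_1(x) - Q_2(x) \;=\; (\rho_1 - \tau_1)x + (\tau_2 - \rho_2),
\]
which rearranges to
\[
x \;=\; \frac{\tau_2 - \rho_2}{Y^2 - Z^2 + \tau_1 - \rho_1}.
\]
Since the standing non-hyperelliptic hypothesis gives $\tau_2 - \rho_2 = \alpha_2\alpha_3 - \beta_2\beta_3 \neq 0$, this expresses $x$ as an element of $k(Y,Z)$; then $y_1 = xY$ and $y_2 = xZ$ likewise lie in $k(Y,Z)$, so $k(C) = k(x, y_1, y_2) = k(Y,Z) = k(\tilde{C})$, and $\phi$ is birational.

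It remains to check that $\tilde{C}$ is non-singular. By Lemma \ref{lem:bij}, the coefficients $b_{40} = \rho_2$ and $b_{04} = \tau_2$ are non-zero, so $f$ has total degree $4$, and by Proposition \ref{prop:irred} it is absolutely irreducible. Thus $\tilde{C}$ is an irreducible plane quartic, whose arithmetic genus equals $p_a(\tilde{C}) = (4-1)(4-2)/2 = 3$. On the other hand, by Proposition \ref{Katsura-Takashima} with $g_1 = g_2 = 1$ and $r = 2$, the smooth curve $C$ has genus $2g_1 + 2g_2 + 1 - r = 3$; being birational to $C$, the quartic $\tilde{C}$ has geometric genus $3$ as well. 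The equality $p_a(\tilde{C}) = p_g(\tilde{C})$ forces every delta invariant on $\tilde{C}$ to vanish, so $\tilde{C}$ is smooth, and $\phi$ extends to the desired isomorphism. The only slightly delicate step is the field-theoretic identification $k(C) = k(Y,Z)$, and this is handled cleanly by the explicit formula for $x$ above; the rest of the argument is a routine combination of the resultant construction with the genus formula for plane curves.
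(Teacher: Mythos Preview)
Your proof is correct. The birationality step matches the paper's: both of you write down the forward map $(x,y_1,y_2)\mapsto(y_1/x,y_2/x)$ and recover $x$ from the identity $x(Y^2-Z^2+\tau_1-\rho_1)=\tau_2-\rho_2$, using the non-hyperelliptic hypothesis $\tau_2\neq\rho_2$ to make this well-defined.

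The genuine difference is in how you prove non-singularity. The paper computes $\partial f/\partial Y$ and $\partial f/\partial Z$ directly, splits into four cases according to which factor of each vanishes, and in each case evaluates $f$ to a quantity shown non-zero by Lemma~\ref{lem:bij}; it then handles the points at infinity by checking that $b_{40}Y^4+b_{22}Y^2Z^2+b_{04}Z^4$ has no repeated root (again via $b_{22}^2-4b_{40}b_{04}\neq 0$ from Lemma~\ref{lem:bij}). Your route is shorter and more conceptual: since $f$ is absolutely irreducible of degree $4$, the projective curve $\tilde C$ has arithmetic genus $3$, and since it is birational to the smooth curve $C$ of genus $3$ (by Proposition~\ref{Katsura-Takashima}), the delta invariants all vanish and $\tilde C$ is smooth. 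This buys you a uniform argument that avoids the case analysis and the separate treatment of infinity, at the cost of invoking the general fact $p_g=p_a\Rightarrow$ smooth for reduced irreducible plane curves. The paper's approach, by contrast, is entirely self-contained and makes visible exactly which of the non-vanishing conditions in Lemma~\ref{lem:bij} are responsible for smoothness at each point.
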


\begin{proof}
\textcolor{black}{
Denoting by $b_{ij}$ the $Y^i Z^j$-coefficient of $f$, we have the following:
\[
\frac{\partial f}{\partial Y} = 2 Y ( 2 b_{40} Y^2 +  b_{22} Z^2 +  b_{20} ), \quad
\frac{\partial f}{\partial Z} = 2 Z ( 2 b_{04} Z^2 + b_{22} Y^2 + b_{02})
\]
If $\frac{\partial f}{\partial Y} = \frac{\partial f}{\partial Z} = 0$, then we have
\begin{align*}
    \begin{cases}
    Y =0 \quad\text{or}\quad Y^2=\frac{- b_{22} Z^2 - b_{20}}{2b_{40}},\\
    Z =0 \quad\text{or}\quad Z^2=\frac{ -b_{22} Y^2 - b_{02}}{2 b_{04}},
    \end{cases}
\end{align*}
whence there are four cases.
By a tedious computation for each of the four cases together with Lemma \ref{lem:bij}, we obtain
\[
f(Y,Z)=
\begin{cases}
b_{00} = \prod_{i,j} (\alpha_i - \beta_j )& (Y=0\text{ and }Z=0),\\
- \frac{b_{20}^2 - 4 b_{40} b_{00}}{4b_{40}} = - \frac{(\beta_2-\beta_3)^2(\alpha_2\alpha_3 - \beta_2\beta_3)^2}{4 \beta_2 \beta_3} & (Y^2=\frac{- b_{22} Z^2 - b_{20}}{2b_{40}}\text{ and }Z = 0),\\
- \frac{b_{02}^2 -4 b_{00} b_{04} }{4b_{04}} = -\frac{(\alpha_2-\alpha_3)^2(\alpha_2\alpha_3 - \beta_2\beta_3)^2}{4 \alpha_2 \alpha_3}& (Y=0\text{ and }Z^2=\frac{ -b_{22} Y^2 - b_{02}}{2 b_{04}}),\\
\frac{b_{20}^2 b_{04}^2 - b_{20} b_{22} b_{02} + b_{02}^2 b_{40} + b_{22}^2 b_{00} - 4 b_{40} b_{00} b_{04}}{b_{22}^2 - 4 b_{40} b_{04}} & (\text{otherwise}),
\end{cases}
\]
where the right hand side in the fourth case is equal to $(\alpha_2\alpha_3 - \beta_2 \beta_3)^2$.
Therefore, it follows from $\alpha_2 \alpha_3 \neq \beta_2 \beta_3$ that $f(Y,Z) \neq 0$ for each of the four cases, whence $f(Y,Z) = 0$ is non-singular.}
{We next discuss the singularity of the points at infinity of $\tilde{C}$. It is easy to see that there is a singular point at infinity if and only if the polynomial $b_{40}Y^4+b_{22}Y^2Z^2+b_{04}Z^4$ has a multiple root. Since $b_{40},b_{04}\neq 0$ and $b_{22}^2-4b_{40}b_{04}\neq 0$, it does not have any multiple root, whence $\tilde{C}$ is non-singular.}

\textcolor{black}{
Next, we shall prove that $E_1 \times_{\mathbb{P}^1}E_2$ is birational to $D:f(Y,Z) = 0$.
For any point $(x,y_1,y_2)$ on $E_1 \times_{\mathbb{P}^1} E_2$, it is straightforward that the corresponding point $(Y,Z)$ lies on $C$, whence we obtain a rational map:
\[
\Phi : E_1 \times_{\mathbb{P}^1} E_2 \dashrightarrow D \ ; \ (x,y_1,y_2) \mapsto \left( \frac{y_1}{x}, \frac{y_2}{x} \right),
\]
which is well-defined over all the points $(x,y_1,y_2)$ in $E_1 \times_{\mathbb{P}^1}E_2$ with $x \neq 0$.
We can also construct the inverse rational map as follows:
For each point $(Y,Z)$ on $D$, it follows from $\mathrm{Res}_x (f_1(x,Y,Z),f_2(x,Y,Z)) = 0$ that there exists a common root of the univariate polynomials $f_1(x,Y,Z)$ and $f_2(x,Y,Z)$.
Since $\tau_2 \neq \rho_2$ by our assumption that $C$ is non-hyperelliptic, we have $- \tau_1 - Y^2 \neq - \rho_1 - Z^2$, and such the root is uniquely given by
\[
x = \frac{\tau_2 - \rho_2}{(Y^2 + \tau_1) - (Z^2 + \rho_1)} \neq 0
\]
With this root $x$, we define a rational map
\[
\Psi : D \dashrightarrow E_1 \times_{\mathbb{P}^1} E_2 \ ; \ (Y,Z) \mapsto \left( x, x Y, x Z \right),
\]
which is defined over all the points on $D$.
Clearly both $\Phi\circ\Psi$ and $\Psi \circ \Phi$ are well-defined, and they are equal to $\mathrm{\rm id}_D$ and $\mathrm{id}_{E_1\times_{\mathbb{P}^1}E_2}$ as rational maps. 
{Since $C$ and $\tilde{C}$ are smooth,}
we have completed the proof.}
\end{proof}

\if 0
\begin{theorem}\label{thm:Howe curve non-hyper}
Let $\lambda,\mu,\nu \in k$ with $\nu\not\in \{\mu^2\lambda,0,1\}$, $\mu \not\in \{0,1,\nu\}$, $\lambda \not\in \{0, 1\}$, and $\mu\lambda \not\in \{1,\nu\}$, and $E_1$ and $E_2$ elliptic curves defined as in \eqref{eq:OortCurve}.
Then, a non-hyperelliptic Howe curve $C$ of genus $3$ such that $C$ is birational to $E_1\times_{\mathbb{P}^1}E_2$ is represented by a smooth quartic curve $F(s_1,s_2)=0$ in $\mathbb{A}^2=\mathrm{Spec}(k[s_1,s_2])$, where
\begin{align}\label{eq:Howe non-hyper equation}
\begin{aligned}
    F(s_1,s_2):=&\ \mu^2\lambda(s_1^2+\nu+1)^2+ \nu(s_2^2+\mu(1+\lambda))^2\\
    &-(\mu^2\lambda+\nu)(s_1^2+\nu+1)(s_2^2+\mu(1+\lambda))+(\mu^2\lambda-\nu)^2.
\end{aligned}
\end{align}
\end{theorem}
\begin{proof}
Note that the curve $E_1\times_{\mathbb{P}^1}E_2$ is defined as
\[
E_1\times_{\mathbb{P}^1}E_2\colon 
\begin{cases}
y_1^2= x(x-1)(x-\nu)\\
y_2^2= x(x-\mu)(x-\mu\lambda)
\end{cases}
.
\]
If $x\neq 0$, we have $\left(\frac{y_1}{x}\right)^2=x+\frac{\nu}{x}-(\nu+1)$ and $\left(\frac{y_2}{x}\right)^2=x+\frac{\mu^2\lambda}{x}-\mu(1+\lambda)$.
It holds that
\[
\beta\left(x+\frac{\alpha}{x}\right)^2+\alpha\left(x+\frac{\beta}{x}\right)^2-(\alpha+\beta)\left(x+\frac{\alpha}{x}\right)\left(x+\frac{\beta}{x}\right)=-(\alpha-\beta)^2
\]
for any $\alpha$ and $\beta$.
Therefore, putting $s_1 = y_1/x$ and $s_2 = y_2/x$, one has
\begin{align*}
&\mu^2\lambda\left(s_1^2+\nu+1\right)^2+ \nu\left(s_2^2+\mu(1+\lambda)\right)^2-(\mu^2\lambda+\nu)\left(s_1^2+\nu+1\right)\left(s_2^2+\mu(1+\lambda)\right)\\
=\ &\mu^2\lambda\left(x+\frac{\nu}{x}\right)^2+ \nu\left(x+\frac{\mu^2\lambda}{x}\right)^2-(\mu^2\lambda+\nu)\left(x+\frac{\nu}{x}\right)\left(x+\frac{\mu^2\lambda}{x}\right)\\
=\ & 2\nu\cdot \mu^2\lambda + 2\mu^2\lambda\cdot\nu - (\mu^2\lambda+\nu)(\mu^2\lambda+\nu)= -(\mu^2\lambda-\nu)^2,
\end{align*}
and thus obtains the following rational map:
\[
\begin{array}{ccc}
    E_1\times_{\mathbb{P}^1}E_2 & \longrightarrow & F(s_1,s_2)=0 \\
    (x,y_1,y_2) & \longmapsto & \displaystyle\left(\frac{y_1}{x},\frac{y_2}{x}\right)
\end{array}
.
\]
From $s_1=y_1/x$ and $s_2=y_2/x$, we can compute $x$ as 
\[
x=\frac{\nu-\mu^2\lambda}{(s_1^2+\nu+1)-(s_2^2+\mu(1+\lambda))}.
\]
Therefore, the above map is birational. Now we prove that a plane curve $F(s_1,s_2)=0$ is non-hyperelliptic of genus $3$, i.e., smooth and of degree $4$. Since $\mu^2\lambda\neq 0$ and $\nu\neq 0$, we have $\deg{F}=4$. The transpose of the Jacobi matrix of $F$ is
\begin{align*}
2
\begin{pmatrix}
2\mu^2\lambda(s_1^2+\nu+1)s_1-(\mu^2\lambda+\nu)(s_2^2+\mu(1+\lambda))s_1 \\
-(\mu^2\lambda+\nu)(s_1^2+\nu+1)s_2+ 2\nu(s_2^2+\mu(1+\lambda))s_2
\end{pmatrix}
.
\end{align*}
If this matrix is the zero matrix, then we have
\begin{align*}
    \begin{cases}\displaystyle
    s_1 =0 \quad\text{or}\quad s_1^2=\frac{(\mu^2\lambda+\nu)(s_2^2+\mu(1+\lambda))}{2\mu^2\lambda}-\nu-1,\\\displaystyle
    s_2 =0 \quad\text{or}\quad s_2^2=\frac{(\mu^2\lambda+\nu)(s_1^2+\nu+1)}{2\nu}-\mu(1+\lambda).
    \end{cases}
\end{align*}
By the straightforward computation for the above four cases, it holds that
\[
F(s_1,s_2)=
\begin{cases}
-(\mu^2\lambda-\nu)^2 & (s_1^2=0\text{ and }s_2^2=0)\\
-(\mu^2\lambda-\nu)^2\frac{(1-\lambda)^2}{4\lambda} & (s_1^2=\frac{(\mu^2\lambda+\nu)(s_2^2+\mu(1+\lambda))}{2\mu^2\lambda}-\nu-1\text{ and }s_2^2=0)\\
-(\mu^2\lambda-\nu)^2\frac{(1-\nu)^2}{4\lambda} & (s_1^2=0\text{ and }s_2^2=\frac{(\mu^2\lambda+\nu)(s_1^2+\nu+1)}{2\nu}-\mu(1+\lambda))\\
(\mu^2\lambda-\nu)^2 & (\text{otherwise})
\end{cases}
.
\]
Since $\mu^2\lambda \neq \nu$, $\lambda \neq 1$, and $\nu \neq 1$, we have $F(s_1,s_2)\neq 0$ for each case. Therefore, the curve $F(s_1,s_2)=0$ is smooth, as desired.

\end{proof}
\fi

\textcolor{black}{
As a particular case, we can take an elliptic curve in Legendre form as $E_1$, by setting $(\alpha_2,\alpha_3,\beta_2,\beta_3) = (1,\nu,\mu,\mu \lambda)$ with $\alpha_2 \alpha_3 = \nu \neq \mu^2 \lambda = \beta_2\beta_3$.
\if 0
For example, if $(\alpha_2,\alpha_3,\beta_2,\beta_3) = (1,\nu,\mu^2\lambda)$, then $\alpha_2 \alpha_3 = \nu \neq \mu^2 \lambda = \beta_2\beta_3$ since $H$ is non-hyperelliptic, and moreover
\[
\begin{array}{llll}
\tau_1 := \alpha_2 + \alpha_3 = 1 + \nu, & \tau_2 := \alpha_2 \alpha_3 =\nu, & \rho_1 := \beta_2 + \beta_3 = (1+\lambda)\mu, & \rho_2 := \beta_2\beta_3 = \mu^2 \lambda.
\end{array}
\]
In this case, $f$ is absolutely irreducible, by Proposition \ref{prop:irred}.
\fi
In this case, similarly to the hyperelliptic case, we obtain the following corollary (cf.\ \cite{Ohashi2} on the $\mathbb{F}_{p^2}$-maximality/minimality of non-hyperelliptic Howe curves of genus $3$):}

\begin{corollary}[cf.\ {\cite[Theorem 1.1]{Ohashi2}}]\label{cor:max2}
\textcolor{black}{Let $C$ be a non-hyperelliptic Howe curve $C$ of genus $3$ associated with genus-$1$ curves $E_1$ and $E_2$ given by \eqref{eq:Oort0}.}
If $C$ is superspecial, then it is defined over $\mathbb{F}_{p^4}$, and moreover it is \textcolor{black}{$\mathbb{F}_{p^4}$-minimal.}
\end{corollary}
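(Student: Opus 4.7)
The plan is to mirror the argument for Corollary \ref{cor:max}, using the decomposition $J(C)\sim E_1\times E_2\times E_3$ from Theorem \ref{thm:completely decomposed Richelot} to pull the known arithmetic of supersingular Legendre forms back to the coefficients of \eqref{eq:Howe non-hyper equation}. Since $C$ is superspecial, $J(C)[p]\cong J(E_1)[p]\times J(E_2)[p]\times J(E_3)[p]$, so all three of $E_1$, $E_2$, and $E_3:=C/\langle\sigma\tau\rangle$ are supersingular elliptic curves. After the normalization $x\mapsto x/\alpha_2$, we may assume $\alpha_2=1$ and write $\nu=\alpha_3$, $\mu=\beta_2$, $\mu\lambda=\beta_3$; then $E_1$ is already in Legendre form with parameter $\nu$, $E_2$ is $\overline{\mathbb{F}_p}$-isomorphic to the Legendre form with parameter $\lambda$, and $E_3\colon y^2=(x-1)(x-\nu)(x-\mu)(x-\mu\lambda)$ is isomorphic to the Legendre form with parameter $\lambda'$ given by \eqref{eq:lambda_prime}.

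Next I would invoke \cite[Proposition~3.1]{AT02} (the result already used in the proof of Corollary \ref{cor:max}) to conclude that each of $\nu$, $\lambda$, and $\lambda'$ is a fourth power of an element of $\mathbb{F}_{p^2}$, hence lies in $\mathbb{F}_{p^2}$. Cross-multiplying \eqref{eq:lambda_prime} and expanding gives the identity
\[
\lambda(\lambda'-1)\mu^2 + \bigl[1+\lambda\nu-\lambda'(\lambda+\nu)\bigr]\mu + \nu(\lambda'-1) = 0,
\]
which is a genuine quadratic over $\mathbb{F}_{p^2}$ because $\lambda'\neq 1$ (otherwise $E_3$ would be singular) and $\lambda\neq 0$. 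Therefore $\mu\in\mathbb{F}_{p^4}$, and consequently $\tau_1=1+\nu$, $\tau_2=\nu$, $\rho_1=\mu(1+\lambda)$, and $\rho_2=\mu^2\lambda$ all lie in $\mathbb{F}_{p^4}$. Substituting into \eqref{eq:Howe non-hyper equation}, every coefficient $b_{ij}$ of the defining quartic of $\tilde{C}\cong C$ belongs to $\mathbb{F}_{p^4}$, so $C$ is defined over $\mathbb{F}_{p^4}$.

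For the $\mathbb{F}_{p^4}$-minimality, I would apply \cite[Proposition~2.2]{AT02}: a Legendre curve whose parameter is a fourth power in $\mathbb{F}_{p^2}$ has geometric Frobenius acting as $[\pm p]$ over $\mathbb{F}_{p^2}$, so its Frobenius over $\mathbb{F}_{p^4}$ acts as $[p^2]$, giving Frobenius eigenvalues $(p^2,p^2)$ on $H^1(E_i)$, i.e.\ $\mathbb{F}_{p^4}$-minimality of each $E_i$. Because the involutions $\sigma,\tau$ (hence the decomposed Richelot isogeny $J(C)\to E_1\times E_2\times E_3$) are defined over the same field $\mathbb{F}_{p^4}$ as the normalized model of $C$, isogeny invariance of the zeta function forces the six Frobenius eigenvalues on $H^1(C)$ to all equal $p^2$, yielding $\mathbb{F}_{p^4}$-minimality of $C$.

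The main obstacle is the middle step: upgrading "$\mu$ lies in some finite extension of $\mathbb{F}_{p^2}$" to "$\mu\in\mathbb{F}_{p^4}$". In the hyperelliptic case the extra relation $\nu=\mu^2\lambda$ collapses square roots into $\mathbb{F}_{p^2}$, which is why Corollary \ref{cor:max} gets definability and $\mathbb{F}_{p^2}$-maximality/minimality. Here that relation is absent, and it is precisely the supersingularity of the \emph{third} quotient $E_3$ — read through the quadratic arising from \eqref{eq:lambda_prime} — which provides the extra constraint. Once $\mu\in\mathbb{F}_{p^4}$ is established, the rest is a direct translation of the hyperelliptic argument.
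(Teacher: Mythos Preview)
Your proposal is correct and follows essentially the same approach as the paper: you reduce to the Legendre parameters $\nu,\lambda,\lambda'\in\mathbb{F}_{p^2}$ via \cite[Proposition~3.1]{AT02}, derive the quadratic in $\mu$ from \eqref{eq:lambda_prime} (your equation is \eqref{eq:quad_mu} cleared of denominators), conclude $\mu\in\mathbb{F}_{p^4}$, and then transfer $\mathbb{F}_{p^4}$-minimality from the $E_i$ to $C$ using \cite[Proposition~2.2]{AT02}. Your write-up is slightly more detailed than the paper's (e.g., checking $\lambda'\neq 1$ so the quadratic is nondegenerate, and spelling out why $\mathbb{F}_{p^2}$-maximal/minimal implies $\mathbb{F}_{p^4}$-minimal), but the route is the same.
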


\begin{proof}
Let $E_3$ be a genus-$1$ curve as defined in Theorem \ref{thm:completely decomposed Richelot}, explicitly given by $E_3 : y_3^2 = (x-1)(x-\nu)(x-\mu)(x-\mu \lambda)$ in our case.
If $C$ is superspecial, then $E_1$, $E_2$, and $E_3$ are supersingular, where the Legendre forms $E_1$, $E_2$, and $E_3$ are $E_{\nu} : y^2 = x (x-1)(x-\nu)$, $E_{\lambda} : y^2 = x(x-1)(x-\lambda)$, and $E_{\lambda'} : y^2=x(x-1)(x-\lambda')$ with $\lambda'$ given as in \eqref{eq:lambda_prime}.
It follows from \cite[Proposition 3.1]{AT02} that $\nu$, $\lambda$, and $\lambda'$ are the $4$-th powers of elements in $\mathbb{F}_{p^2}$.
We have
\begin{equation}\label{eq:quad_mu}
\lambda \cdot \mu^2+\frac{\lambda'( \nu+\lambda)-(1+\lambda\nu)}{1-\lambda'}\mu+\nu=0.
\end{equation}
Therefore, it holds that $\mu \in\mathbb{F}_{p^4}$.

\textcolor{black}{
Moreover, it follows also from \cite[Proposition 2.2]{AT02} that $E_{\nu}$, $E_{\lambda}$, and $E_{\lambda'}$ are both $\mathbb{F}_{p^2}$-maximal (resp.\ $\mathbb{F}_{p^2}$-minimal) if $p \equiv 3 \pmod{4}$ (resp.\ $p \equiv 1 \pmod{4}$).
Therefore, $E_1$, $E_2$, and $E_3$ are $\mathbb{F}_{p^4}$-minimal.
Since $C$ is $\mathbb{F}_{p^4}$-minimal if and only if $E_1$, $E_2$, and $E_3$ are both $\mathbb{F}_{p^4}$-minimal, we have proved the second argument.}
\end{proof}

\begin{remark}
In the above \textcolor{black}{corollary}, the curve $C$ is defined over $\mathbb{F}_{p^4}$ in general.
However, from \cite[Theorem 1.1]{Ohashi2}, it can descends to a quartic plane curve defined over $\mathbb{F}_{p^2}$.
\end{remark}

\begin{remark}\label{rem:general}
\textcolor{black}{
The method presented in this subsection might be extended to the case where a generalized Howe curve has genus $\geq 4$ as follows:
Let $C$ be a generalized Howe curve associated with two hyperelliptic curves $C_1$ and $C_2$ of genera $g_1$ and $g_2$ given by \eqref{eq:Howe}, and assume $g_1=g_2$ for simplicity.
We set $\phi:= (x-a_1)(x-a_2)\cdots (x-a_r)$, $Q_1 := (x-b_1)\cdots (x-b_{2g_1+2-r})$, and $Q_2 := (x-c_1)\cdots (x-c_{2g_2+2-r})$.
Putting $Y := y_1/ \phi$ and $Z := y_2/\phi$ and squaring both sides, we obtain $\phi Y^2 = Q_1$ and $\phi Z^2 = Q_2$.
Then, we obtain an equation $f(Y,Z)=0$ by computing $f = \mathrm{Res}_x (f_1,f_2) \in k[Y,Z]$ for $f_1 := Q_1 - \phi Y^2$ and $f_2:= Q_2 - \phi Z^2$.
For the case where $H$ is non-hyperelliptic with $(g_1,g_2,r,g)=(2,2,4,5)$, we prove in \cite{MoriyaKudo2023} that $f$ is absolutely irreducible, and that $C$ is birational to the projective closure of $f=0$.
It is an open problem to prove the irreducibility of $f$ and the birationality for $(g_1,g_2,r,g)$ other than $(1,1,2,3)$ or $(2,2,4,5)$.
}
\end{remark}

\section{Enumeration of superspecial generalized Howe curves}\label{sec:ssp}

This section is devoted to the computational enumeration of superspecial \textcolor{black}{(s.sp.\ for short)} generalized Howe curves.
Note that an algorithm in genus $4$ is already proposed in \cite{KHH} (resp.\ \cite{OKH22}) for the non-hyperelliptic (resp.\ hyperelliptic) case.
We here present computational methods for the case of genus $3$, and prove \textcolor{black}{Theorem \ref{thm:main4}} in Section \ref{sec:intro}.
In our methods, defining equations for produced Howe curves as genus-$3$ curves are explicitly computed by Theorems \ref{thm:Howe curve hyper elliptic} and \ref{thm:Howe curve non-hyper2}, and they are used for classifying isomorphism classes of the curves.

\subsection{Some lemmas for isomorphism classification}\label{subsec:isom}
Before constructing computational methods for enumerating s.sp.\ Howe curves, we start with preparing some lemmas that can make isomorphism classification for Howe curves efficient.

Let $\mathcal{C} = \{ C_1, \ldots , C_n \}$ and $\mathcal{D} = \{ D_1, \ldots , D_n \}$ be sets of curves over \textcolor{black}{$k$}.
We write $\mathcal{C} \sim \mathcal{D}$ if there exists a permutation $\sigma \in \mathfrak{S}_n$ such that $C_i \cong D_{\sigma (i)}$ for each $1 \leq i \leq n$.
The following lemma is a generalization of \cite[Lemma 4]{OKH22}:

\begin{lemma}\label{lem:V4isom}
Let $C^{(1)}$ and $C^{(2)}$ be generalized hyperelliptic Howe curves defined as the normalizations of $C_1^{(1)} \times_{\mathbb{P}^1} C_2^{(1)}$ and $C_1^{(2)} \times_{\mathbb{P}^1} C_2^{(2)}$ respectively, where each $C_{j}^{(i)}$ is a hyperelliptic curve.
Assume that $\mathrm{Aut}(C^{(i)}) = V_4$ for each $i$.
If $C^{(1)} \cong C^{(2)}$, then we have $\{ C_1^{(1)}, C_2^{(1)} \} \sim \{ C_1^{(2)}, C_2^{(2)} \}$.
\end{lemma}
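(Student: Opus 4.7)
The plan is to exploit the assumption $\mathrm{Aut}(C^{(i)}) = V_4$ to pin down the hyperelliptic involution among the three non-trivial involutions coming from the Howe structure, and then to use the canonicity of the hyperelliptic involution under arbitrary isomorphisms.

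First I would fix notation: write $\sigma_j^{(i)}$ for the involution on $C^{(i)}$ whose quotient map realizes $C^{(i)} \to C_j^{(i)}$ ($j = 1, 2$), and set $\sigma_3^{(i)} := \sigma_1^{(i)} \sigma_2^{(i)}$, as in Subsection~\ref{subsec:Howe}. Under the hypothesis $\mathrm{Aut}(C^{(i)}) = V_4$, the involutions $\sigma_1^{(i)}, \sigma_2^{(i)}, \sigma_3^{(i)}$ together with the identity exhaust $\mathrm{Aut}(C^{(i)})$. Since $C^{(i)}/\langle \sigma_j^{(i)} \rangle \cong C_j^{(i)}$ is a hyperelliptic curve for $j = 1, 2$, it has positive genus and in particular is not isomorphic to $\mathbb{P}^1$, so neither $\sigma_1^{(i)}$ nor $\sigma_2^{(i)}$ can equal the hyperelliptic involution $\iota_{C^{(i)}}$. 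Since $\iota_{C^{(i)}}$ must lie in $\mathrm{Aut}(C^{(i)}) = V_4$, this forces the identification $\iota_{C^{(i)}} = \sigma_3^{(i)}$.

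Next, given an isomorphism $\varphi : C^{(1)} \to C^{(2)}$, conjugation by $\varphi$ induces a group isomorphism between $\mathrm{Aut}(C^{(1)})$ and $\mathrm{Aut}(C^{(2)})$, and the uniqueness of the hyperelliptic involution yields $\varphi \iota_{C^{(1)}} \varphi^{-1} = \iota_{C^{(2)}}$, i.e., $\varphi \sigma_3^{(1)} \varphi^{-1} = \sigma_3^{(2)}$. Consequently $\varphi$ carries $\{ \sigma_1^{(1)}, \sigma_2^{(1)} \}$ bijectively onto $\{ \sigma_1^{(2)}, \sigma_2^{(2)} \}$: there exists $\tau \in \mathfrak{S}_2$ with $\varphi \sigma_j^{(1)} \varphi^{-1} = \sigma_{\tau(j)}^{(2)}$ for $j = 1, 2$. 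Descending $\varphi$ through the corresponding quotient maps then yields $C_j^{(1)} = C^{(1)}/\langle \sigma_j^{(1)} \rangle \cong C^{(2)}/\langle \sigma_{\tau(j)}^{(2)} \rangle = C_{\tau(j)}^{(2)}$, which is precisely the desired relation $\{ C_1^{(1)}, C_2^{(1)} \} \sim \{ C_1^{(2)}, C_2^{(2)} \}$.

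The only step that genuinely requires the full strength of $\mathrm{Aut}(C^{(i)}) = V_4$ (rather than merely $\mathrm{Aut}(C^{(i)}) \supseteq V_4$) is the identification $\iota_{C^{(i)}} = \sigma_3^{(i)}$; I expect this to be the main conceptual point, though it is essentially immediate once one rules out $\iota_{C^{(i)}} \in \{\sigma_1^{(i)}, \sigma_2^{(i)}\}$ via $C_j^{(i)} \not\cong \mathbb{P}^1$. No serious technical obstacle is anticipated.
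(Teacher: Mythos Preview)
Your proof is correct and follows essentially the same approach as the paper's: both arguments identify $\{C_1^{(i)}, C_2^{(i)}\}$ as the quotients of $C^{(i)}$ by its two non-hyperelliptic involutions (using that these quotients have positive genus), and then use that an isomorphism $C^{(1)} \cong C^{(2)}$ must carry $\iota_{C^{(1)}}$ to $\iota_{C^{(2)}}$ and hence permute the remaining two involutions. Your write-up simply makes explicit the conjugation step and the identification $\iota_{C^{(i)}} = \sigma_3^{(i)}$ that the paper's proof leaves implicit.
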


\begin{proof}
\textcolor{black}{
Recall from Subsection \ref{subsec:Howe} that $C_1^{(i)}$ and $C_2^{(i)}$ are not isomorphic over $\mathbb{P}^1$.
Thus, it follows from $\mathrm{Aut}(C^{(i)}) = V_4$ that $\{ C_1^{(i)}, C_2^{(i)} \}$ consists of the quotients of $C^{(i)}$ by non-hyperelliptic involutions, whence the assertion holds by $C^{(1)} \cong C^{(2)}$.}
\if 0
\sout{For each $i=1,2$, it is straightforward that the projection maps $C^{(i)} \to C_{j}^{(i)}$ are separable morphisms of degree $2$, and thus $C_j^{(i)}$ is isomorphic over $\mathbb{P}^1$ to the quotient curve of $C^{(i)}$ by the subgroup of an order-$2$ element $\sigma_j^{(i)}$ of $\mathrm{Aut}(C^{(i)})$ which is not the hyperelliptic involution, say $C^{(i)}/\langle \sigma_j^{(i)} \rangle$.
Note that $\sigma_1^{(i)} \neq \sigma_2^{(i)}$, since otherwise $C_1^{(i)}$ and $C_2^{(i)}$ are isomorphic over $\mathbb{P}^1$, i.e., they have the same ramified points, which contradicts an upper bound on the number of common ramified points in Proposition \ref{Katsura-Takashima}.
Considering the canonical isomorphism $\mathrm{Aut}(C^{(1)}) \cong \mathrm{Aut}(C^{(2)})$ induced from $C^{(1)} \cong C^{(2)}$, we have the assertion.}
\if 0
We have $k(C_{j}^{(i)}) \subset k(C^{(i)})^G \subset k(C^{(i)})$ with $G = \mathrm{Aut}(k(C^{(i)})/k(C_{j}^{(i)}))$, where $[k(C^{(i)}) : k(C_{j}^{(i)})] = 2$ and $\# G \leq 2$.
Note also that $k(C^{(i)})/k(C^{(i)})^G$ is a Galois extension and $[k(C^{(i)}) : k(C^{(i)})^G] = \#G$.
Since $k(C^{(i)})/k(C_{j}^{(i)})$ is separable, we have $\# G =2$ and thus $k(C_j^{(i)}) = k(C^{(i)})^G \cong k(C^{(i)}/G)$, so that $C_j^{(i)}$ is birational to $C^{(i)}/G$.
\fi
\fi
\end{proof}

In the case where Howe curves are genus-$3$ curves of Oort type, similarly to the proof of Lemma \ref{lem:V4isom}, we can prove the following lemma:

\begin{lemma}\label{lem:V4isomOort}
Let $C^{(1)}$ and $C^{(2)}$ be genus-$3$ Howe curves of Oort type defined as the normalizations of $E_1^{(1)} \times_{\mathbb{P}^1} E_2^{(1)}$ and $E_1^{(2)} \times_{\mathbb{P}^1} E_2^{(2)}$ respectively, where $E_{1}^{(i)}$ and $E_2^{(i)}$ are genus-$1$ curves sharing exactly $2$ ramified points for each $i=1,2$.
Note that $E_{j}^{(i)} \cong C^{(i)} / \langle \sigma_j^{(i)} \rangle$ for some order-$2$ distinct elements $\sigma_{1}^{(i)}, \sigma_{2}^{(i)} \in \mathrm{Aut}(C^{(i)})$ with $\sigma_1^{(i)} \sigma_2^{(i)} = \sigma_2^{(i)} \sigma_1^{(i)}$.
Let $E_3^{(i)}$ be the genus-$1$ curve defined by $C^{(i)} / \langle \sigma_1^{(i)} \sigma_2^{(i)} \rangle$.

We suppose that $\mathrm{Aut}(C^{(i)}) = V_4$ for each $i$.
If $C^{(1)} \cong C^{(2)}$, then one has $\{ E_1^{(1)}, E_2^{(1)}, E_3^{(1)} \} \sim \{ E_1^{(2)}, E_2^{(2)}, E_3^{(2)} \}$.
\end{lemma}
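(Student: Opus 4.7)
My plan is to mimic the argument used for Lemma \ref{lem:V4isom}: under the hypothesis $\mathrm{Aut}(C^{(i)}) = V_4$, the three genus-$1$ curves $E_1^{(i)}, E_2^{(i)}, E_3^{(i)}$ are intrinsically the quotients of $C^{(i)}$ by the three order-$2$ subgroups of its full automorphism group, and any isomorphism $C^{(1)} \cong C^{(2)}$ must therefore induce a bijection among them.

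Concretely, I would set $\sigma_3^{(i)} := \sigma_1^{(i)} \sigma_2^{(i)}$. Since $\sigma_1^{(i)}$ and $\sigma_2^{(i)}$ are distinct commuting involutions, the four elements $1, \sigma_1^{(i)}, \sigma_2^{(i)}, \sigma_3^{(i)}$ are pairwise distinct and exhaust $\mathrm{Aut}(C^{(i)}) = V_4$. The three non-trivial order-$2$ subgroups of $V_4$ are then $\langle \sigma_j^{(i)} \rangle$ for $j=1,2,3$, and by the hypothesis and the definition of $E_3^{(i)}$, the corresponding quotient curves are exactly $E_1^{(i)}, E_2^{(i)}, E_3^{(i)}$. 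Note that the hypothesis also forces $C^{(i)}$ to be non-hyperelliptic: if it were hyperelliptic, the hyperelliptic involution $\iota_{C^{(i)}}$ would be a fourth non-trivial element of $\mathrm{Aut}(C^{(i)})$, since it cannot coincide with any of $\sigma_j^{(i)}$ (each giving a genus-$1$ quotient rather than $\mathbb{P}^1$).

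Next, given an isomorphism $\varphi \colon C^{(1)} \to C^{(2)}$, conjugation by $\varphi$ yields a group isomorphism $\mathrm{Aut}(C^{(1)}) \to \mathrm{Aut}(C^{(2)})$, which must restrict to a bijection on the three non-trivial elements of $V_4$. Hence there exists a permutation $\tau \in \mathfrak{S}_3$ with $\varphi \circ \sigma_j^{(1)} \circ \varphi^{-1} = \sigma_{\tau(j)}^{(2)}$ for each $j \in \{1,2,3\}$. Consequently $\varphi$ descends to an isomorphism of quotient curves $C^{(1)}/\langle \sigma_j^{(1)} \rangle \cong C^{(2)}/\langle \sigma_{\tau(j)}^{(2)} \rangle$, i.e.\ $E_j^{(1)} \cong E_{\tau(j)}^{(2)}$, for each $j$, which is exactly the relation $\{E_1^{(1)}, E_2^{(1)}, E_3^{(1)}\} \sim \{E_1^{(2)}, E_2^{(2)}, E_3^{(2)}\}$.

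I do not anticipate any serious obstacle; the entire argument is formal once the three quotients are recognized as intrinsic to $C^{(i)}$. The slightly delicate point, addressed above, is to use the hypothesis $\mathrm{Aut}(C^{(i)})=V_4$ together with the genus-$1$ assumption on the three quotients to identify the set $\{E_1^{(i)}, E_2^{(i)}, E_3^{(i)}\}$ canonically with the set of quotients by the order-$2$ subgroups of $V_4$; once this identification is in place, the proof is essentially parallel to that of Lemma \ref{lem:V4isom} and no input from the explicit Howe construction (e.g.\ Theorem \ref{thm:Howe curve non-hyper2}) is needed.
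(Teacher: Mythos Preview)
Your proof is correct and follows essentially the same approach as the paper, which in fact gives no separate proof for this lemma but simply states that it is proved ``similarly to the proof of Lemma \ref{lem:V4isom}''. Your argument is exactly the intended elaboration: under $\mathrm{Aut}(C^{(i)})=V_4$ the three genus-$1$ quotients are the quotients by the three order-$2$ subgroups and hence intrinsic, so any isomorphism $C^{(1)}\cong C^{(2)}$ permutes them.
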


For $\nu, \mu, \lambda \in \overline{\mathbb{F}_p}$ with $\nu \notin \{ 0, 1 \}$, $\mu \notin \{ 0, 1, \nu \}$ and $\lambda \notin \{ 0, 1, \mu^{-1},  \nu \mu^{-1} \}$, we define three genus-$1$ curves as $E_{\nu} : y^2 = x (x-1)(x-\nu)$ and $E_{\mu,\lambda} : y^2 = x (x- \mu ) (x-\mu \lambda)$ and $E_{\nu,\mu,\lambda} : y^2 = (x-1)(x-\nu)(x-\mu)(x-\mu \lambda)$ as in \eqref{eq:OortCurve}.
In the following lemma, we shall bound (by a constant) the number of triples $(\lambda, \mu, \nu)$ such that the three genus-$1$ curves have given $j$-invariants:

\begin{lemma}\label{lem:genus1number}
Given a set $\{ E_1, E_2, E_3 \}$ of three genus-$1$ curves over $\overline{\mathbb{F}_p}$ with $p \neq 2$, the number of triples $(\lambda, \mu, \nu) \in \overline{\mathbb{F}_p}^3$ with $\nu \notin \{ 0, 1 \}$, $\mu \notin \{ 0, 1, \nu \}$, and $\lambda \notin \{ 0, 1, \mu^{-1},  \nu \mu^{-1} \}$ such that $\{ E_1, E_2, E_3 \} \sim \{ E_{\nu}, E_{\mu,\lambda}, E_{\nu,\mu,\lambda} \}$ is at most $2592$ (not depending on $p$!).
\end{lemma}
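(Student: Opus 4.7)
The plan is to partition the set of triples $(\nu,\mu,\lambda)$ counted in the lemma according to which of the three parametrized curves $E_\nu$, $E_{\mu,\lambda}$, $E_{\nu,\mu,\lambda}$ is matched with each of $E_1$, $E_2$, $E_3$ under the relation $\sim$. Since there are $3! = 6$ such matchings (permutations $\sigma \in \mathfrak{S}_3$), the claimed bound $2592$ will follow once I show that for each fixed $\sigma$ the number of compatible triples is at most $6 \cdot 6 \cdot 12 = 432$.

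First I would record two standard facts. By the transformation $x \mapsto x/\mu$, the curve $E_{\mu,\lambda}$ is isomorphic to the Legendre curve $E_\lambda$, so $j(E_{\mu,\lambda})$ depends only on $\lambda$. Moreover, for any $j_0 \in k$, the equation $j(E_t) = j_0$ in the Legendre parameter $t$ has at most $6$ solutions, as the fiber is a single orbit of the $\mathfrak{S}_3$-action generated by $t \mapsto 1-t$ and $t \mapsto 1/t$. Consequently, after fixing a permutation $\sigma$ -- say, without loss of generality, matching $E_\nu \leftrightarrow E_1$, $E_{\mu,\lambda} \leftrightarrow E_2$, $E_{\nu,\mu,\lambda} \leftrightarrow E_3$ -- the condition $E_\nu \cong E_1$ restricts $\nu$ to at most $6$ values, and independently $E_{\mu,\lambda} \cong E_2$ restricts $\lambda$ to at most $6$ values, while imposing no constraint on $\mu$.

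The remaining step is to bound, for fixed $(\nu,\lambda)$, the number of $\mu$ with $E_{\nu,\mu,\lambda} \cong E_3$. Applying the M\"obius transformation sending $(1,\nu,\mu) \mapsto (0,\infty,1)$ takes the fourth branch point $\mu\lambda$ to
\begin{equation*}
\lambda' \;=\; \frac{(\mu\lambda - 1)(\nu - \mu)}{(\mu\lambda - \nu)(1 - \mu)},
\end{equation*}
so that $E_{\nu,\mu,\lambda}$ is isomorphic to the Legendre curve $E_{\lambda'}$. Thus $E_{\nu,\mu,\lambda} \cong E_3$ forces $\lambda'$ to lie in the fiber $j^{-1}(j(E_3))$, which has at most $6$ elements. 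For each admissible target value $c$ of $\lambda'$, clearing denominators in $\lambda' = c$ yields, after a direct expansion, the equation
\begin{equation*}
\lambda(c-1)\mu^{2} \;+\; \bigl(\lambda\nu + 1 - c\lambda - c\nu\bigr)\mu \;-\; \nu(c-1) \;=\; 0,
\end{equation*}
whose leading coefficient $\lambda(c-1)$ is nonzero (because $\lambda \neq 0$ by hypothesis and $c \neq 1$, since $c = 1$ would force two branch points of a genus-$1$ curve to coincide). Hence there are at most $2$ values of $\mu$ per target $c$, giving at most $12$ values of $\mu$ in total.

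Combining the three steps yields at most $6 \cdot 6 \cdot 12 = 432$ triples per permutation, and summing over the $6$ permutations produces the bound $2592$. The one point requiring (mild) care is the degree-$2$ verification for the $\mu$-equation, since I must rule out the degeneration $c = 1$; beyond that, everything reduces to elementary facts about Legendre forms and the $j$-invariant. The uniformity in $p$ is manifest, as every step is purely combinatorial and no auxiliary field arithmetic is invoked.
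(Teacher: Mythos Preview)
Your proof is correct and follows essentially the same approach as the paper: split over the $3!$ permutations, use the at-most-$6$ Legendre parameters for a given $j$-invariant to bound $\nu$ and $\lambda$, then observe that the relation $\lambda'=c$ is quadratic in $\mu$ to get the factor $12$. You add a detail the paper leaves implicit, namely the check that the leading coefficient $\lambda(c-1)$ is nonzero; note, however, that the constant term of your quadratic should be $+\nu(c-1)$ rather than $-\nu(c-1)$ (compare with the paper's equation for $\mu$), though this sign slip does not affect the argument.
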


\begin{proof}
By definition of $\sim$, there exists $\sigma \in \mathfrak{S}_3$ such that $E_{\nu} \cong E_{\sigma(1)}$, $E_{\mu, \lambda} \cong E_{\sigma(2)}$, and $E_{\nu,\mu,\lambda} \cong E_{\sigma(3)}$.
Here we recall a well-known fact that there are at most $6$ values of $\xi$ such that an elliptic curve in Legendre form $y^2 =x (x-1)(x-\xi)$ has a given $j$-invariant.
From this, the number of choices for $\nu$ is at most $6$.
The number of choices for $\lambda$ is also at most $6$, since $E_{\mu,\lambda}$ is isomorphic to $y^2 = x(x-1)(x-\lambda)$.
The third genus-$1$ curve $E_{\nu,\mu,\lambda}$ is isomorphic to $y^2=x(x-1)(x-\lambda')$ for $\lambda'$ as in \eqref{eq:lambda_prime},
and hence $\mu$ can take $6 \times 2 = 12$ values at most.
Since the number of choices for $\sigma$ is $3!=6$, that for $(\nu,\mu,\lambda)$ is bounded by $6^2 \times 12 \times 6 = 2592 $, as desired.
\end{proof}

We also define a genus-$2$ curve $H_{a,b,c}:y^2 = x(x-1)(x-a)(x-b)(x-c)$ (this equation is called a {\it \textcolor{black}{Rosenhain} form}) for mutually distinct elements $a,b,c \in \overline{\mathbb{F}_{p}}$ with $a, b, c \notin \{ 0, 1 \}$.
As for the number of isomorphism classes of $H_{a,b,c}$, we have the following lemma:

\begin{lemma}\label{lem:genus2number}
Given a genus-$2$ curve $H$ over $\overline{\mathbb{F}_p}$ with $p \neq 2$, the number of subsets $\{ a,b,c \} \subset \overline{\mathbb{F}_p}$ of cardinality $3$ with $H_{a,b,c} \cong H$ is at most $120$ (not depending on $p$!).
\end{lemma}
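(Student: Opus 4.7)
The plan is to count sets $\{a,b,c\}$ yielding $H_{a,b,c}\cong H$ by tracking where the Weierstrass points $\{0,1,\infty\}$ of $H_{a,b,c}$ can be sent by an isomorphism. Recall that the six Weierstrass points of the Rosenhain form $H_{a,b,c}\colon y^2=x(x-1)(x-a)(x-b)(x-c)$ are $\{0,1,\infty,a,b,c\}$, and let $W$ denote the set of six Weierstrass points of the given curve $H$. By Lemma \ref{lem:hyperelliptic matrix}, every isomorphism of hyperelliptic curves acts on the $x$-coordinate by a M\"obius transformation, and in particular sends Weierstrass points to Weierstrass points bijectively.

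The key observation is the following: if $H_{a,b,c}\cong H$ via an isomorphism $\psi$, then the associated M\"obius transformation $\phi\in\mathrm{PGL}_2(k)$ satisfies $\phi(\{0,1,\infty,a,b,c\})=W$. Setting $(w_0,w_1,w_\infty):=(\phi(0),\phi(1),\phi(\infty))$, we obtain an \emph{ordered} triple of distinct elements of $W$, and conversely $\{a,b,c\}=\phi^{-1}(W\smallsetminus\{w_0,w_1,w_\infty\})$ is determined by $(w_0,w_1,w_\infty)$ since by the sharp triple transitivity of $\mathrm{PGL}_2(k)$, $\phi$ itself is determined by its values at $0,1,\infty$. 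Therefore the assignment
\[
\Xi\colon\{\text{ordered triples of distinct elements of }W\}\longrightarrow\{\text{subsets }\{a,b,c\}\text{ with }H_{a,b,c}\cong H\}
\]
sending $(w_0,w_1,w_\infty)$ to $\phi^{-1}(W\smallsetminus\{w_0,w_1,w_\infty\})$, where $\phi$ is the unique element of $\mathrm{PGL}_2(k)$ mapping $(0,1,\infty)$ to $(w_0,w_1,w_\infty)$, is well-defined, and the argument above shows that $\Xi$ is surjective.

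Consequently the right-hand cardinality is bounded by that of the left-hand side, which is $6\cdot 5\cdot 4=120$. The main (minor) subtlety is simply to confirm that $\Xi$ really lands in the target set, i.e.\ that $\phi^{-1}(W\smallsetminus\{w_0,w_1,w_\infty\})$ automatically consists of three distinct elements avoiding $\{0,1,\infty\}$; but this is immediate from $\phi^{-1}$ being a bijection sending $\{w_0,w_1,w_\infty\}$ precisely to $\{0,1,\infty\}$. No further work is required, and the stated bound $120$ follows.
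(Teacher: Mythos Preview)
Your argument is correct and is essentially the same as the paper's: the paper simply cites \cite[Lemma~5]{OKH22} (which counts Rosenhain models) together with the factor $\#\mathfrak{S}_3=6$, while you have written out this count directly via the $6\cdot5\cdot4$ ordered images of $\{0,1,\infty\}$ in $W$. One cosmetic remark: strictly speaking, the fact that $\Xi$ lands in the target set also uses that a M\"obius transformation carrying $\{0,1,\infty,a,b,c\}$ onto $W$ induces an isomorphism $H_{a,b,c}\cong H$, but since your bound only needs surjectivity of $\Xi$ onto the target (which you do establish), this is harmless.
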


\begin{proof}
\textcolor{black}{
The assertion follows from the proof of {\cite[Lemma 5]{OKH22}} and $\# \mathfrak{S}_3=6$.}
\end{proof}

Note that the bounds given in Lemmas \ref{lem:genus1number} and \ref{lem:genus2number} are best possible.

\subsection{Enumerating s.sp.\ Howe curves of genus three}\label{subsec:enum}
Recall from the beginning of Section \ref{sec:main2} that Howe curves $C$ of genus $3$ are either of {\bf Oort type} or {\bf Howe type}, say $E_1 \times_{\mathbb{P}^1} E_2$ for elliptic curves $E_1$ and $E_2$ as in \eqref{eq:OortCurve}, or $E \times_{\mathbb{P}^1} H$ for curves $E$ and $H$ of genera $1$ and $2$ as in \eqref{eq:HoweType}. 
Note also that any $C$ of Howe type is always hyperelliptic, and recall from Corollary \ref{cor:OortTypeIsHoweType} that any hyperelliptic $C$ of Oort type is of Howe type.
In the following, we present methods to enumerate s.sp.\ Howe curves $C$ of genus $3$, dividing into the hyperelliptic or non-hyperelliptic case:

\subsubsection*{\bf Hyperelliptic case:}
In this case, recall from Lemma \ref{lem:V4} that a Howe curve $C$ is nothing but a hyperelliptic curve with automorphism group containing $V_4$, and it is always of Howe type, namely $C $ is the normalization of $E \times_{\mathbb{P}^1} H$ for curves $E$ and $H$ of genera $1$ and $2$ sharing exactly $4$ Weierstrass points.
Transforming $3$ among the Weierstrass points of $H$ into $\{ 0, 1, \infty \}$, we may assume that $E$ and $H$ are given by $E_{a,b,c}:y^2 = (x-1)(x-a)(x-b)(x-c)$ and $H_{a,b,c}:y^2 = x(x-1)(x-a)(x-b)(x-c)$, and by Lemma \ref{lem:C/sigma hyper} that $C$ is given by $C_{a,b,c} : y^2 = (x^2-1)(x^2-a)(x^2-b)(x^2-c)$. 
Here is our method to enumerate s.sp.\ $C_{a,b,c}$'s.

\begin{method}\label{method1}
For an input $p \geq 3$, proceed with the following:
\begin{enumerate}
    \item[0.] Set $\mathcal{C}$ $\leftarrow$ $\emptyset$ and $\mathcal{I}$ $\leftarrow$ $\emptyset$.
    \item[1.] Compute the list $\mathcal{S}_p$ of all supersingular $j$-invariants in characteristic $p>0$.
    \item[2.] Enumerate all s.sp.\ curves of genus $2$ in characteristic $p$, by computing Richelot isogenies (see \cite[\S 5A]{KHH} or \cite[\S 5.1, Step 1]{OKH22} for details).
    \item[3.] List genus-$2$ s.sp.\ curves of the form $H_{a,b,c}$ by the method in \cite[\S 5.1, Step 2]{OKH22}.
    Every when $H_{a,b,c}$ is produced, check the following:
    \begin{itemize}
        \item The $j$-invariant of an elliptic curve isomorphic to $E_{a,b,c}$ belongs to $\mathcal{S}_p$.
        \item The sequence $I$ of the Shioda invariants of the genus-$3$ hyperelliptic curve $C_{a,b,c}$ does not belong to $\mathcal{I}$.
    \end{itemize}
    If both the two conditions hold, then set $\mathcal{C}$ $\leftarrow$ $\mathcal{C} \cup \{ C_{a,b,c} \}$ and $\mathcal{I}$ $\leftarrow$ $\mathcal{I} \cup \{ I \}$.
    \item[4.] Return $\mathcal{C}$.
\end{enumerate}
\end{method}

\begin{lemma}
Method \ref{method1} outputs a list of all isomorphism classes of s.sp.\ hyperelliptic Howe curves of genus $3$, and its complexity is $\tilde{O}(p^3)$.
\end{lemma}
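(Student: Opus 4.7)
The plan is to establish correctness (every isomorphism class of superspecial hyperelliptic Howe curve of genus $3$ appears exactly once in $\mathcal{C}$) and then bound the runtime by $\tilde{O}(p^3)$.

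For correctness, I would first reduce every s.sp.\ hyperelliptic Howe curve to the form $C_{a,b,c}$. By Lemma \ref{lem:V4} together with Corollary \ref{cor:OortTypeIsHoweType}, any hyperelliptic Howe curve of genus $3$ is birational to $E\times_{\mathbb{P}^1}H$ for a genus-$1$ curve $E\colon y^2=f(x)$ and a genus-$2$ curve $H\colon y^2=xf(x)$ with $f$ a square-free quartic and $f(0)\neq 0$. Normalising three Weierstrass points of $H$ to $\{0,1,\infty\}$ via $\mathrm{PGL}_2$, we may assume $H=H_{a,b,c}$, $E=E_{a,b,c}$, and $C=C_{a,b,c}$. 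By Theorem \ref{thm:KTmain3}, there is a decomposed Richelot isogeny $J(C)\to J(E)\times J(H)$; since Richelot isogenies are in particular isogenies, $C$ is superspecial if and only if both $E$ and $H$ are superspecial. Step 3 enumerates all s.sp.\ $H_{a,b,c}$ and retains precisely those for which $j(E_{a,b,c})\in\mathcal{S}_p$, so every isomorphism class of s.sp.\ hyperelliptic Howe curve of genus $3$ is produced at least once. Uniqueness is enforced by the Shioda invariants, which classify hyperelliptic curves of genus $3$ up to isomorphism: the set $\mathcal{I}$ stores these invariants and each class is added to $\mathcal{C}$ at most once.

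For the complexity bound, I would account for the cost of each step. Step 1 computes $\mathcal{S}_p$ in $\tilde{O}(p)$ (or at worst $\tilde{O}(p^{3/2})$) by a standard modular polynomial walk, and $|\mathcal{S}_p|=O(p)$. Step 2 enumerates all s.sp.\ genus-$2$ curves in $\tilde{O}(p^3)$ operations via the Richelot-isogeny-graph method of \cite{KHH,OKH22}, producing $O(p^3)$ curves. Step 3 iterates over the $O(p^3)$ triples $(a,b,c)$ such that $H_{a,b,c}$ is s.sp.; Lemmas \ref{lem:genus2number} and \ref{lem:V4isom} guarantee this total count is still $O(p^3)$, each isomorphism class contributing $O(1)$ triples. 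For each triple, computing $j(E_{a,b,c})$ and the Shioda invariants of $C_{a,b,c}$ takes $O(1)$ field operations; membership tests in $\mathcal{S}_p$ and $\mathcal{I}$ are $\tilde{O}(1)$ when these sets are maintained as a balanced search structure or hash table.

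Summing these contributions gives $\tilde{O}(p^3)$, as claimed. The main subtlety I expect is the bookkeeping needed to ensure that Step 2, as given in \cite{OKH22}, indeed outputs each s.sp.\ $H_{a,b,c}$ in $\tilde{O}(1)$ amortised time per curve so that the $O(p^3)$ count translates into $\tilde{O}(p^3)$ total cost; this is precisely what is established in \cite[\S 5.1, Step 2]{OKH22}, and I would cite that argument rather than reproving it. Everything else is a direct consequence of the fact that a Richelot isogeny on Jacobians preserves superspeciality and that Shioda invariants form a complete isomorphism invariant in genus three.
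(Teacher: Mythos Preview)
Your outline matches the paper's argument closely, but one step needs repair. You write that ``since Richelot isogenies are in particular isogenies, $C$ is superspecial if and only if both $E$ and $H$ are superspecial.'' Being isogenous preserves supersingularity, not superspeciality: there are supersingular abelian varieties isogenous but not isomorphic to $E^g$. The paper's justification is the right one: a Richelot isogeny has kernel contained in the $2$-torsion, hence of order prime to $p$, so it induces an isomorphism of $p$-kernel group schemes
\[
J(C_{a,b,c})[p]\;\cong\;J(E_{a,b,c})[p]\times J(H_{a,b,c})[p].
\]
Since superspeciality is detected by the $a$-number (equivalently, by the isomorphism type of the $p$-kernel), this gives $a(J(C_{a,b,c}))=a(E_{a,b,c})+a(H_{a,b,c})$, and hence $C_{a,b,c}$ is superspecial if and only if $E_{a,b,c}$ is supersingular and $H_{a,b,c}$ is superspecial. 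Once you insert this argument, your proof is correct and essentially identical to the paper's; your treatment of completeness (that every class is reached) is in fact more explicit than the paper's, which leaves that direction implicit.

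One minor remark on your complexity paragraph: Lemma~\ref{lem:V4isom} is not actually what bounds the triple count here (it concerns curves with $\mathrm{Aut}=V_4$, which Method~\ref{method1} does not assume). The bound $O(p^3)$ on triples comes simply from the $O(p^3)$ isomorphism classes of s.sp.\ genus-$2$ curves together with Lemma~\ref{lem:genus2number}.
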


\begin{proof}
It suffices to show that each computed $C_{a,b,c}$ is superspecial.
Indeed, the $a$-number of each computed $C_{a,b,c}$ associated to $(E_{a,b,c},H_{a,b,c})$ is three by an isomorphism $J(C_{a,b,c})[p] \cong J(E_{a,b,c})[p] \times J(H_{a,b,c})[p]$ of $p$-kernels.

As for the complexity, the complexity of Step 1 is $\tilde{O}(p)$, see, e.g., \cite[\S 5.1.1]{KH20}, and Step 2 is done in $O(p^3)$, see \cite[\S 5.1, Step 1]{OKH22} for details.
In Step 3, we apply the method in \cite[\S 5.1, Step 2]{OKH22}, where at most $120$ (by Lemma \ref{lem:genus2number}) $H_{a,b,c}$'s are produced in constant time for each isomorphism class of s.sp.\ genus-$2$ curves, and the number of produced\ $H_{a,b,c}$'s is $O(p^3)$, namely $\# \mathcal{C} = \mathcal{I} = O(p^3)$.
For each produced $H_{a,b,c}$, it follows from $\# \mathcal{S}_p = O(p)$ that the supersingularity test for $E_{a,b,c}$ is done in $O(\mathrm{log}(p))$, and $I$ is computed in constant time with respect to $p$.
Since $\# \mathcal{I}=O(p^3)$, the cost of testing $I \in \mathcal{I}$ is $\log(p^3) = O(\mathrm{log}(p))$.
Therefore, the total complexity is $p + p^3 + p^3 (\mathrm{log}(p) + \mathrm{log}(p^3)) = \tilde{O}(p^3)$, as desired.
\end{proof}

\begin{remark}\label{rem:method1}
Note that $\# \mathcal{C}$ for an output $\mathcal{C}$ of {\bf Method \ref{method1}} is in fact expected to be $O(p^2)$ as follows:
Heuristically, we can estimate it as the number of s.sp.\ $H_{a,b,c}$ generated in Step 3, times the probability that $E_{a,b,c}$ is supersingular, say
\[
\approx \frac{p^3}{2880} \times 120 \times \frac{1}{2p} = \frac{p^2}{48} = O(p^2).
\]
\end{remark}

S.sp.\ hyperelliptic Howe curves of Oort type can be also enumerated by {\bf Method \ref{method2}} below.
Recall that an Oort type-curve $C$ is given as the normalization of $E_1 \times_{\mathbb{P}^1} E_2$ for $E_1 : y^2 = x (x-1)(x-\nu)$ and $E_2 : y^2 = x (x- \mu ) (x-\mu \lambda)$ together with $E_3 : y^2 = (x-1)(x-\nu)(x-\mu)(x-\mu \lambda)$, and that an equation defining $C$ is \textcolor{black}{\eqref{eq:Howe hyper equation_Legendre}}.

\begin{method}\label{method2}
For an input $p \geq 3$, proceed with the following:
\begin{enumerate}
    \item[0.] Set $\mathcal{C}$ $\leftarrow$ $\emptyset$ and $\mathcal{I}$ $\leftarrow$ $\emptyset$.
    \item[1.] Compute the list $\mathcal{S}_p$ of all supersingular $j$-invariants in characteristic $p>0$, and generate the list $\mathcal{T}_{p}$ of $t \in \mathbb{F}_{p^2}$ such that $ y^2=x(x-1)(x-t)$ is a supersingular elliptic curve.
    \item[2.] For each $\nu$ and $\lambda$ in $\mathcal{T}_p$, conduct the following:
    \begin{enumerate}
        \item[2a.] Compute the roots $\mu$ of $\mu^2 - \nu \lambda^{-1}$.
        (Note that the roots belong to $\mathbb{F}_{p^2}$ as described in the proof of Corollary \ref{cor:max}.)
        \item[2b.] For each root $\mu$, check the following:
        \begin{itemize}
        \item $\mu \notin \{ 0,1,\nu, \lambda^{-1}, \lambda^{-1} \nu, \}$.
        \item The $j$-invariant of an elliptic curve isomorphic to $E_3$ is in $\mathcal{S}_p$.
        \item The sequence $I$ of the Shioda invariants of the genus-$3$ hyperelliptic curve $C$ (associated with $(E_1,E_2)$) as in \textcolor{black}{\eqref{eq:Howe hyper equation_Legendre}} does not belong to $\mathcal{I}$.
    \end{itemize}
        If all the three conditions hold, then set $\mathcal{C}$ $\leftarrow$ $\mathcal{C} \cup \{ C \}$ and $\mathcal{I}$ $\leftarrow$ $\mathcal{I} \cup \{ I \}$.
    \end{enumerate}
    \item[3.] Return $\mathcal{C}$.
\end{enumerate}
\end{method}

Heuristically, the number of s.sp.\ curves we find in Step 2 should be $(\# \mathcal{T}_p)^2$ ($\approx (p-1)^2/4$), times the number of $\mu$'s for each $(\nu,\lambda)$ (at most $2$), times the probability that $E_3$ is supersingular ($\approx 1/(2p)$), and thus $\approx p/4$ in total.

The complexity of {\bf Method \ref{method2}} is estimated as $\tilde{O}(p^2)$, and we expect $\# \mathcal{C} = O(p)$ similarly to Remark \ref{rem:method1}.

\subsubsection*{\bf Non-hyperelliptic case:}
In this case, every non-hyperelliptic Howe curve $C$ of genus $3$ is always of Oort type, namely $C$ is the normalization of $E_1 \times_{\mathbb{P}^1} E_2$ for elliptic curves $E_1$ and $E_2$ as in \eqref{eq:OortCurve} with $\mu^2 \lambda \neq \nu$.
For the enumeration of s.sp.\ $C$'s, we divide the case into the case of $\mathrm{Aut}(C) = V_4$, and the other case.
In the other case, the order of $\mathrm{Aut}(C)$ is either of $8$, $16$, $24$, $48$, $96$, and $168$ (corresponding to $D_4$, $G_{16}$, $\mathfrak{S}_4$, $G_{48}$, $G_{96}$, and $G_{168}$ respectively), see e.g., \cite[\textcolor{black}{Proposition} 2.5]{Ohashi2}.
More precisely, it suffices to proceed with the following {\bf Method \ref{method3}}:

\begin{method}\label{method3}
For an input $p \geq 3$, proceed with the following:
\begin{enumerate}
    \item[0.] Set $\mathcal{J}$ $\leftarrow$ $\emptyset$ and $\mathcal{L}^{{\rm (others)}}_r$ $\leftarrow$ $\emptyset$ for each $r \in \mathcal{R}:= \{ 8,16,24,48,96,168 \} $.
    \item[1.] Compute the list $\mathcal{S}_p$ of all supersingular $j$-invariants in characteristic $p>0$, and for each $j \in \mathcal{S}_{p}$, generate the list $\mathcal{T}_{p,j}$ of at most six $t \in \mathbb{F}_{p^2}$ such that $ y^2=x(x-1)(x-t)$ is a supersingular elliptic curve \textcolor{black}{with $j$-invariant $j$}.
    \item[2.] For each multi-subset $J$ of $\mathcal{S}_p$ with cardinality $3$, conduct the following:
    \begin{enumerate}
        \item[2a.] Set $\mathcal{L}_J$ $\leftarrow$ $\emptyset$.
        \item[2b.] For each permutation $(j_1,j_2,j_3)$ of $J$, and for each $\nu \in \mathcal{T}_{p,j_1}$, $\lambda \in \mathcal{T}_{p,j_2}$, and $\lambda' \in \mathcal{T}_{p,j_3}$, compute $\mu \in \overline{\mathbb{F}_{p}}$ with $\mu \notin \{ 0,1,\nu, \lambda^{-1}, \lambda^{-1} \nu, \pm \sqrt{\lambda^{-1}\nu} \}$ such that the genus-$1$ curve $E_3: y^2=(x-1)(x-\nu)(x-\mu)(x-\mu \lambda)$ is isomorphic to $y^2 = x (x-1)(x-\lambda')$, as the roots of the quadratic equation \eqref{eq:quad_mu}.
        \item[2c.] For each $\mu$ computed in Step 2b, set $\mathcal{L}_J$ $\leftarrow$ $\mathcal{L}_J \cup \{ (\nu,\lambda,\mu) \}$.
    \end{enumerate}
    
    \item[3.] For each multi-subset $J$ of $\mathcal{S}_p$ with cardinality $3$, conduct the following:
    \begin{enumerate}
        \item[3a.] Set $\mathcal{L}^{(V_4)}_J$ $\leftarrow$ $\emptyset$.
        \item[3b.] For each $(\nu,\lambda,\mu) \in \mathcal{L}_J$, \textcolor{black}{compute a sextic given in \eqref{eq:Howe non-hyper equation} and let $C_{\nu,\lambda,\mu}$ be the curve defined by it.
        Compute} $r:=\# \mathrm{Aut}(C_{\nu,\lambda,\mu})$.
        \begin{itemize}
            \item If $r = 4$, then $\mathcal{L}^{(V_4)}_J$ $\leftarrow$ $\mathcal{L}^{(V_4)}_J \cup \{ C_{\nu,\lambda,\mu} \}$.
            \item Otherwise, $\mathcal{L}^{{\rm (others)}}_r$ $\leftarrow$ $\mathcal{L}^{{\rm (others)}}_r \cup \{ C_{\nu,\lambda,\mu} \}$.
        \end{itemize}
        \item[3c.] If $\mathcal{L}^{(V_4)}_J \neq \emptyset$, set $\mathcal{J}$ $\leftarrow$ $\mathcal{J} \cup \{ J \}$.
    \end{enumerate}
    Set $\mathcal{L}^{(V_4)}$ $\leftarrow$ $\bigcup_{J \in \mathcal{J}} \mathcal{L}^{(V_4)}_J$ and $\mathcal{L}^{\rm (others)}$ $\leftarrow$ $\bigcup_{r \in \mathcal{R}} \mathcal{L}^{\rm (others)}_r$.
    \item[4.] Execute the isomorphism classification for each of $\mathcal{L}^{(V_4)}$ and $\mathcal{L}^{\rm (others)}$:
    \begin{enumerate}
        \item[4a.] For each $J \in \mathcal{J}$, initialize $\mathcal{C}^{(V_4)}_J$ $\leftarrow$ $\emptyset$, and conduct the following:
        \begin{itemize}
        \item For each $C_{\nu,\lambda,\mu} \in \mathcal{L}^{(V_4)}_J$, if it is not isomorphic to any curve in $\mathcal{C}^{(V_4)}_J$, we set $\mathcal{C}^{(V_4)}_J$ $\leftarrow$ $\mathcal{C}^{(V_4)}_J \cup \{ C_{\nu,\lambda,\mu} \}$.
        \end{itemize}
        \item[4b.] Similarly to \textcolor{black}{4a}, classify isomorphisms of curves in each $\mathcal{L}^{\rm (others)}_r$, and let $\mathcal{C}^{\rm (others)}_r$ be the list of obtained isomorphism classes.
    \end{enumerate}
    Set $\mathcal{C}^{(V_4)}$ $\leftarrow$ $\bigcup_{J \in \mathcal{J}} \mathcal{C}^{(V_4)}_J$ and $\mathcal{C}^{\rm (others)}$ $\leftarrow$ $\bigcup_{r \in \mathcal{R}} \mathcal{C}^{\rm (others)}_r$.
    \item[5.] Return $\mathcal{C}^{(V_4)} \cup \mathcal{C}^{\rm (others)}$.
\end{enumerate}
\end{method}

Note that the square-root $\sqrt{\lambda^{-1} \nu}$ in Step 2b can be taken as an element in $\mathbb{F}_{p^2}$ by \cite[\textcolor{black}{Proposition} 2.2]{AT02}, and that the roots $\mu$ for each $(\nu,\lambda,\lambda')$ belong to $\mathbb{F}_{p^4}$.

\begin{lemma}\label{lem:method3comp}
Method \ref{method3} outputs a list of all isomorphism classes of s.sp.\ non-hyperelliptic Howe curves of genus $3$, and the arithmetic complexity of each step is given as follows:
Step 1: $\tilde{O}(p)$, Step 2: $O(p^3)$, Step 3: $\tilde{O}(p^3)$, Step 4a: \textcolor{black}{$\tilde{O}(\# \mathcal{J})$}, and Step 4b: $\tilde{O} ((\# \mathcal{L}^{\rm (others)})^2)$, so that the total complexity is upper-bounded by $\tilde{O}(p^3) + \tilde{O}(\# \mathcal{J}) + \tilde{O} ((\# \mathcal{L}^{\rm (others)})^2)$, where the second term is $\tilde{O}(p^3)$. 
\end{lemma}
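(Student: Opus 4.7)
The plan is to split the argument into a correctness part and a per-step complexity bookkeeping, using the explicit structural results already established earlier in the paper.

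For correctness, I would invoke Theorem \ref{thm:completely decomposed Richelot} together with the opening discussion of Section \ref{sec:main2}: every non-hyperelliptic Howe curve $C$ of genus $3$ is of Oort type, hence birational to $E_1 \times_{\mathbb{P}^1} E_2$ with $E_1,E_2$ sharing exactly two Weierstrass points, and $J(C)$ admits a completely decomposed Richelot isogeny to $E_1 \times E_2 \times E_3$. If $C$ is superspecial, each of $E_1,E_2,E_3$ is supersingular, so their $j$-invariants form a multi-subset $J\subset\mathcal{S}_p$ of cardinality $3$. After moving two common Weierstrass points to $0$ and $\infty$ and normalizing $E_1$ to Legendre form \eqref{eq:OortCurve}, the Legendre parameters of $E_1,E_2,E_3$ are $\nu,\lambda,\lambda'$ respectively, with $\lambda'$ given by \eqref{eq:lambda_prime} and $\mu$ a root of the quadratic \eqref{eq:quad_mu}. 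Step~2 exhausts all such triples by enumerating $J$, all permutations, and all at-most-six Legendre preimages in $\mathcal{T}_{p,j}$ for each $j\in\mathcal{S}_p$; Step~3 then assembles the quartic model via \eqref{eq:Howe non-hyper equation} (Theorem \ref{thm:Howe curve non-hyper2}) and buckets by $\#\mathrm{Aut}(C_{\nu,\lambda,\mu})$; Step~4 collapses isomorphic curves within each bucket. So every superspecial non-hyperelliptic Howe curve of genus~$3$ is produced at least once and the final output is reduced modulo isomorphism.

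For the step-by-step complexity I would argue as follows. \textbf{Step~1:} $\#\mathcal{S}_p=O(p)$ and $\mathcal{S}_p$ is computable in $\tilde{O}(p)$ (see \cite[\S 5.1.1]{KH20}); for each $j\in\mathcal{S}_p$, $\mathcal{T}_{p,j}$ consists of at most six Legendre parameters, obtained in $\tilde{O}(1)$ by factoring a univariate polynomial of bounded degree over $\mathbb{F}_{p^2}$. Total $\tilde{O}(p)$. \textbf{Step~2:} There are $O(p^3)$ multi-subsets $J$; for each there are $6$ permutations and at most $6^3=216$ triples $(\nu,\lambda,\lambda')$, and for each the quadratic \eqref{eq:quad_mu} has at most $2$ roots $\mu$, all obtained in $O(1)$ arithmetic operations. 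Total $O(p^3)$. \textbf{Step~3:} $|\bigcup_J\mathcal{L}_J|=O(p^3)$; for each parameter triple, forming the quartic \eqref{eq:Howe non-hyper equation} is $O(1)$, and computing $\#\mathrm{Aut}(C_{\nu,\lambda,\mu})$ reduces to a zero-dimensional homogeneous system of bounded size, hence is $\tilde{O}(1)$ by Remark~\ref{rem:GB}. Total $\tilde{O}(p^3)$.

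\textbf{Step~4a:} The key input is Lemma \ref{lem:genus1number}, which bounds $\#\mathcal{L}^{(V_4)}_J\le 2592$ uniformly in $p$ for every $J\in\mathcal{J}$: once the unordered triple of elliptic-quotient isomorphism classes is fixed, only $O(1)$ parameter triples $(\nu,\lambda,\mu)$ can realise it. Hence the pairwise isomorphism classification within each bucket $\mathcal{L}^{(V_4)}_J$ costs $\tilde{O}(1)$ (each plane-quartic isomorphism test is $\tilde{O}(1)$ via \texttt{IsIsomorphicPlaneQuartics} / the Gröbner argument in Remark~\ref{rem:GB}), giving $\tilde{O}(\#\mathcal{J})$ in total. \textbf{Step~4b:} For the finitely many strata with $\#\mathrm{Aut}>4$, we have no per-bucket constant bound, but brute-force pairwise isomorphism tests across $\mathcal{L}^{\mathrm{(others)}}$ cost $\tilde{O}((\#\mathcal{L}^{\mathrm{(others)}})^2)$. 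Summing gives the claimed total $\tilde{O}(p^3)+\tilde{O}(\#\mathcal{J})+\tilde{O}((\#\mathcal{L}^{\mathrm{(others)}})^2)$, and since $\#\mathcal{J}$ is at most the number of multi-subsets inspected, $\#\mathcal{J}=O(p^3)$.

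I expect the most delicate point to be the Step~4a bound: we must insist that the constant $2592$ in Lemma \ref{lem:genus1number} is independent of $p$ so that the classification cost per $J$ is genuinely $\tilde{O}(1)$ rather than growing with $p$; once this and the $\tilde{O}(1)$ cost of an isomorphism test (Remark~\ref{rem:GB}) are accepted, the rest of the estimates are routine counting.
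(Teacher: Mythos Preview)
Your argument matches the paper's approach and the per-step complexity bookkeeping is correct. The one ingredient you omit is Lemma~\ref{lem:V4isomOort}: since Step~4a only tests isomorphism \emph{within} each bucket $\mathcal{L}^{(V_4)}_J$, correctness of the output (no repeated isomorphism classes) requires knowing that two isomorphic curves with $\mathrm{Aut}=V_4$ necessarily lie in the same $J$-bucket, i.e., that the unordered triple $\{E_1,E_2,E_3\}$ of elliptic quotients is an isomorphism invariant of $C$. This is exactly what Lemma~\ref{lem:V4isomOort} supplies, and the paper cites it for correctness; your phrase ``once the unordered triple of elliptic-quotient isomorphism classes is fixed'' presupposes this fact without justifying it. Once that lemma is inserted, your proof is complete and coincides with the paper's.
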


\begin{proof}
The correctness immediately follows from \textcolor{black}{Theorem \ref{thm:Howe curve non-hyper2} and} Lemma \ref{lem:V4isomOort} together with the following:
Each produced $C$ \textcolor{black}{associated} with $(E_1,E_2,E_3)$ is superspecial since we have an isomorphism $J(C)[p]\cong J(E_1)[p] \times J(E_2)[p] \times J(E_3)[p]$.

Step 1 is done in $\tilde{O}(p)$, and $\# \mathcal{S}_{p}$ follows $O(p)$, which means Step 2 has $O(p^3)$ iterations on $J$.
The total number of iterations for Step 2b is upper-bounded by $6 \times \# \mathcal{T}_{p,j_1} \times \# \mathcal{T}_{p,j_2} \times \# \mathcal{T}_{p,j_3} = 6 \times 6^3$, and solving a quadratic equation to obtain $\mu$ is done in $O(1)$.
Thus, Step 2 has complexity $O(p^3)$.

In Step 3, the number of iterations is just the total number of $(\nu,\lambda,\mu)$ obtained in Step 2, which is upper-bounded by $ (\# \mathcal{S}_p)^3 \times 6^3 \times 2 = O(p^3)$.
For each $(\nu,\lambda,\mu)$, the cost of computing $\# \mathrm{Aut}(C_{\nu,\lambda,\mu})$ is $\tilde{O}(1)$, by using a method described in \textcolor{black}{\cite[Section 3]{LSR21}}.
Therefore, the complexity of Step 3 is $\tilde{O}(p^3)$.

As for Step 4, it follows from the construction of Step 2b that the cardinality of each $\mathcal{L}^{(V_4)}_J$ is at most $\# \mathcal{L}^{(V_4)}_J \leq 6 \times 6^3 \times 2 = 2592$, which is nothing but the number of triples $(\nu,\lambda,\mu)$ as in Lemma \ref{lem:genus1number}.
Therefore, the complexity of Step 4a is upper-bounded by $\# \mathcal{J} \times (\mathrm{max}_{J \in \mathcal{J}} \# \mathcal{L}^{(V_4)}_J)^2 \leq  (\# \mathcal{S}_p)^3 \times 2592^2 =  O(p^3)$, whereas that of Step 4b is
\[
\sum_{r \in \mathcal{R}}(\# \mathcal{L}_r^{\rm (others)})^2 \leq (\# \mathcal{L}^{\rm (others)})^2.
\]
Note that each isomorphism test for two plane quartics is done by computing a Gr\"{o}bner basis in constant time with respect to $p$, say $\tilde{O}(1)$, as in the proof of \textcolor{black}{Theorem \ref{thm:decom richelot non-hyper}}.
\end{proof}

We remark that the number $\# (\mathcal{L}^{(V_4)} \cup \mathcal{L}^{\rm (others)} )$ of $C_{\nu,\lambda,\mu}$'s listed in Steps 2 and 3 is approximately $\left( (p-1)/12 \right)^3 \times 6^3 \times 2 \approx p^3/4$, and thus $\# (\mathcal{C}^{(V_4)} \cup \mathcal{C}^{\rm (others)} ) = O(p^3)$.
In practice, we estimate $\# \mathcal{L}^{\rm (others)}=O(p^2)$ as described in Subsection \ref{subsec:imp} below, and hence the total complexity of Method \ref{method3} is expected to be $\tilde{O}(p^4)$.

\subsection{Implementations and computational results}\label{subsec:imp}

We implemented the three enumeration methods (Methods \ref{method1}, \ref{method2}, and \ref{method3}) described in Subsection \ref{subsec:enum} on Magma V2.26-10~\cite{Magma} in a PC with macOS Monterey 12.0.1, at 2.6 GHz CPU 6 Core (Intel Core i7) and 16GB memory.
We also implemented decision-versions of the three methods in the same environment; a decision-version terminates once a single s.sp.\ curve is found, and outputs ``true'' together with the s.sp.\ curve (otherwise outputs ``false'').
The source codes are available at the following web page:

{\small
\begin{center}
    \texttt{http://sites.google.com/view/m-kudo-official-website/english/code/genus3v4}
\end{center}}

\subsubsection{\bf Enumeration results}
We executed Methods \ref{method1} and \ref{method2} (resp.\ Method \ref{method3}) for every $3 \leq p\leq 200$ (resp.\ $11 \leq p \leq 50$) in the hyperelliptic (resp.\ non-hyperelliptic) case.
Our computational results are summarized in the following theorem:

\begin{theorem}\label{thm:ssp}
For every prime $p$ with $3 \leq p \leq 200$ (resp.\ $11 \leq p \leq 50$), the number of isomorphism classes of s.sp.\ hyperelliptic (resp.\ non-hyperelliptic) Howe curves of genus $3$ over $\overline{\mathbb{F}_{p}}$ is summarized in Table \ref{table:1} (resp.\ Table \ref{table:2}).
\end{theorem}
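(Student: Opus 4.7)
The plan is to reduce the theorem to the execution of the enumeration methods already established. By the correctness proofs for \textbf{Methods \ref{method1}}, \ref{method2}, and \ref{method3} (together with Lemma \ref{lem:method3comp}), running each method on input $p$ outputs a complete list of isomorphism representatives of superspecial Howe curves of genus $3$ over $\overline{\mathbb{F}_p}$ of the relevant type. So it suffices to implement the three methods, run them on all primes in the stated ranges, and tabulate the cardinalities of the returned lists.

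First, I would implement Methods \ref{method1} and \ref{method2} for the hyperelliptic case. Both methods need (i) the list $\mathcal{S}_p$ of supersingular $j$-invariants (computable via the Hasse polynomial or an isogeny-graph traversal in $\tilde{O}(p)$), and (ii) for each $j\in\mathcal{S}_p$, the set $\mathcal{T}_{p,j}$ of Legendre parameters giving supersingular elliptic curves. For Method \ref{method1}, I would reuse the superspecial genus-$2$ enumeration from \cite{OKH22} to produce all Rosenhain forms $H_{a,b,c}$, then filter by supersingularity of $E_{a,b,c}$, assemble the curve $C_{a,b,c}:y^2=(x^2-1)(x^2-a)(x^2-b)(x^2-c)$, and deduplicate via the Shioda invariants. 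For Method \ref{method2}, I would loop over $(\nu,\lambda)\in\mathcal{T}_p^2$, solve $\mu^2=\nu/\lambda$, and test supersingularity of the third quotient curve $E_3$ before writing down the defining equation \eqref{eq:Howe hyper equation_Legendre} and deduplicating via Shioda invariants.

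Second, for the non-hyperelliptic case I would implement Method \ref{method3}. The delicate operations are Step 2b (solving the quadratic \eqref{eq:quad_mu} in $\mu$ inside $\mathbb{F}_{p^4}$), Step 3b (assembling the plane quartic \eqref{eq:Howe non-hyper equation} and computing $\#\mathrm{Aut}(C_{\nu,\lambda,\mu})$ using the routine of \cite{LSR21}), and the isomorphism classification in Step 4, which proceeds bucketwise according to the automorphism-group order and, within each bucket, by a Gröbner-basis test on the entries of the transformation matrix in $\mathrm{PGL}_3$. To validate the implementation, I would cross-check the hyperelliptic counts produced by Methods \ref{method1} and \ref{method2} against each other on the overlapping range, since by Corollary \ref{cor:OortTypeIsHoweType} every hyperelliptic Howe curve is simultaneously of Oort type and of Howe type.

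The main obstacle will be controlling the cost of Step 4b in Method \ref{method3}: the size of $\mathcal{L}^{\mathrm{(others)}}$ is only heuristically $O(p^2)$, and the bucket of generic $V_4$-curves is potentially large, so naive pairwise isomorphism testing is expensive. I would mitigate this by bucketing first on the unordered triple of $j$-invariants of $(E_1,E_2,E_3)$ (a cheap invariant), which by Lemma \ref{lem:genus1number} leaves at most $2592$ candidates per bucket, and only invoke the Gröbner-basis isomorphism test within each bucket. With these precautions, the computation terminates well within the stated ranges $p\le 200$ (hyperelliptic case) and $p\le 50$ (non-hyperelliptic case), producing Tables \ref{table:1} and \ref{table:2}.
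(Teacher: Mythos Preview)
Your approach is essentially the paper's: the theorem is a computational result, and its ``proof'' is the implementation and execution of Methods \ref{method1}, \ref{method2}, and \ref{method3}, whose correctness was already established.

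One small correction: your proposed cross-check between Methods \ref{method1} and \ref{method2} would not produce matching counts. Corollary \ref{cor:OortTypeIsHoweType} only asserts that every hyperelliptic Oort-type curve is of Howe type, not the converse; a hyperelliptic Howe curve of Howe type has $\mathrm{Aut}(C)\supset V_4$, but being of Oort type requires two commuting \emph{long} automorphisms (Theorem \ref{thm:completely decomposed Richelot}), which is strictly stronger. Indeed Table \ref{table:1} shows the two columns differ (e.g.\ for $p=11$ the Howe-type count is $1$ while the Oort-type count is $0$). The valid cross-check is only the containment: every curve found by Method \ref{method2} should appear among those found by Method \ref{method1}.
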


\if 0
The existence result (1) (resp.\ (2)) examines Oort's result~\cite[Theorem 5.12 (3)]{oort1991hyperelliptic} in the hyperelliptic case with $p \equiv 3 \pmod{4}$ (resp.\ \cite[Theorem 5.12 (1)]{oort1991hyperelliptic} in the non-hyperelliptic case).
As for the maximality/minimality and the field of definition, we also confirmed the following:
\begin{itemize}
    \item For (1) of Theorem \ref{thm:ssp}, the obtained s.sp.\ hyperelliptic curves are all $\mathbb{F}_{p^2}$-maximal (resp.\ $\mathbb{F}_{p^2}$-minimal) when $p \equiv 3 \pmod{4}$ (resp.\ $p \equiv 1 \pmod{4}$), as Corollary \ref{cor:max} implies.
    In fact, when $p \equiv 3 \pmod{4}$ with $p \neq 3$, we found $(a,b,c) \in (\mathbb{F}_{p^4})^3$ such that $C_{a,b,c} : y^2 = (x^2-1)(x^2-a)(x^2-b)(x^2-c)$ is a s.sp.\ curve of genus $3$ defined over the prime field $\mathbb{F}_{p}$, namely each $x^i$-coefficient in $(x^2-1)(x^2-a)(x^2-b)(x^2-c)$ belongs to $\mathbb{F}_{p}$.
\end{itemize}
\fi

\subsubsection*{Our observation of computational results and timing data.}
In the hyperelliptic case, we find from Table \ref{table:1} that s.sp.\ Howe curves of type $E \times_{\mathbb{P}^1} H$ exist for every prime $p$ with $p \notin \{3,5,\textcolor{black}{13} \}$, and that the number of isomorphism classes of such s.sp.\ curves is $O(p^2)$ as estimated in Remark \ref{rem:method1}.
On the other hand, there are many $p$ for which a s.sp.\ Howe curve of \textcolor{black}{Oort} type $E_1 \times_{\mathbb{P}^1} E_2$ does not exist, and the number of isomorphism classes is $O(p)$ as we expect at the paragraph just after {\bf Method \ref{method2}}. 
As for timing data, we see that our implementation of {\bf Method \ref{method1}} (resp.\ {\bf Method \ref{method2}}) behaves as $\tilde{O}(p^3)$ (resp.\ $\tilde{O}(p^2)$) estimated in Subsection \ref{subsec:enum}.

\renewcommand{\arraystretch}{0.95}
\begin{table}[H]
\centering{
\caption{The number of isomorphism classes of s.sp.\ {\it hyperelliptic} Howe curves of genus $3$, and timing data for Methods \ref{method1} and \ref{method2} for each $p$ with $3 \leq p \leq 200$.
Note that any hyperelliptic Howe curve of Oort type $E_1 \times_{\mathbb{P}^1} E_2$ is of Howe type $E \times_{\mathbb{P}^1} H$.\\
}
\label{table:1}
\scalebox{0.85}{
\begin{tabular}{c|c||r|r||r|r} \hline
\multirow{2}{*}{$p$} & \multirow{2}{*}{$p \bmod 4$} & Howe type & {Time (s) for \ } & Oort type~ & Time (s) for \ \ \\ 
& & $E \times_{\mathbb{P}^1} H$ & {\bf Method \ref{method1}} &  $E_1 \times_{\mathbb{P}^1} E_2$ & {\bf Method \ref{method2}} \\ \hline
 $3$  & 3 &    0 & $<0.1$ &   0 & $<0.1$ \\ \hline
 $5$  & 1 &    0 & $<0.1$ &   0 & $<0.1$ \\ \hline
 $7$  & 3 &    1 &    1.1 &   1 &    0.2 \\ \hline
 $11$ & 3 &    1 & $<0.1$ &   0 & $<0.1$ \\ \hline
 $13$ & 1 &    0 & $<0.1$ &   0 & $<0.1$ \\ \hline
 $17$ & 1 &    2 & $<0.1$ &   1 & $<0.1$ \\ \hline
 $19$ & 3 &    3 &    0.1 &   0 & $<0.1$ \\ \hline
 $23$ & 3 &    6 &    0.2 &   3 & $<0.1$ \\ \hline
 $29$ & 1 &    3 &    0.2 &   0 & $<0.1$ \\ \hline
 $31$ & 3 &   14 &    0.4 &   5 & $<0.1$ \\ \hline
 $37$ & 1 &   10 &    0.4 &   0 & $<0.1$ \\ \hline
 $41$ & 1 &   10 &    0.5 &   1 & $<0.1$ \\ \hline
 $43$ & 3 &   22 &    0.7 &   0 & $<0.1$ \\ \hline
 $47$ & 3 &   32 &    0.9 &   8 &    0.1 \\ \hline
 $53$ & 1 &   15 &    0.9 &   0 & $<0.1$ \\ \hline
 $59$ & 3 &   45 &    1.4 &   0 & $<0.1$ \\ \hline
 $61$ & 1 &   27 &    1.3 &   0 & $<0.1$ \\ \hline
 $67$ & 3 &   48 &    1.8 &   0 & $<0.1$ \\ \hline
 $71$ & 3 &   87 &    2.4 &  11 &    0.2 \\ \hline
 $73$ & 1 &   53 &    2.4 &   5 &    0.2 \\ \hline
 $79$ & 3 &  106 &    3.2 &  11 &    0.2 \\ \hline
 $83$ & 3 &  100 &    3.3 &   0 &    0.1 \\ \hline
 $89$ & 1 &   89 &    3.9 &   1 &    0.2 \\ \hline
 $97$ & 3 &   55 &    4.8 &   5 &    0.3 \\ \hline
$101$ & 1 &  112 &    5.6 &   0 &    0.2 \\ \hline
$103$ & 3 &  132 &    6.6 &  10 &    0.3 \\ \hline
$107$ & 3 &  166 &    6.9 &   0 &    0.3 \\ \hline
$109$ & 1 &  144 &    7.1 &   0 &    0.3 \\ \hline
$113$ & 1 &  104 &    7.8 &   6 &    0.3 \\ \hline
$127$ & 3 &  182 &   11.2 &   8 &    0.4 \\ \hline
$131$ & 3 &  253 &   13.3 &   0 &    0.4 \\ \hline
$137$ & 1 &  164 &   13.9 &   7 &    0.5 \\ \hline
$139$ & 3 &  234 &   15.3 &   0 &    0.5 \\ \hline
$149$ & 1 &  240 &   18.6 &   0 &    0.5 \\ \hline
$151$ & 3 &  341 &   20.3 &  13 &    0.6 \\ \hline
$157$ & 1 &  194 &   21.6 &   0 &    0.5 \\ \hline
$163$ & 3 &  323 &   24.9 &   0 &    0.6 \\ \hline
$167$ & 3 &  459 &   32.5 &  29 &    0.9 \\ \hline
$173$ & 1 &  232 &   29.5 &   0 &    0.7 \\ \hline
$179$ & 3 &  419 &   32.5 &   0 &    0.7 \\ \hline
$181$ & 1 &  273 &   33.4 &   0 &    0.8 \\ \hline
$191$ & 3 &  629 &   43.0 &  50 &    1.3 \\ \hline
$193$ & 1 & 360 &   42.1 &  17 &    1.0 \\ \hline
$197$ & 1 & 402 &   45.5 &   0 &    0.9 \\ \hline
$199$ & 3 & 547 &   48.1 &  23 &    1.1 \\ \hline
\end{tabular}
 }
}
\end{table}
\renewcommand{\arraystretch}{1}

In the non-hyperelliptic case, Table \ref{table:5} below implies that the number of isomorphism classes of s.sp.\ Howe curves of genus $3$ enumerated by {\bf Method \ref{method3}} is $O(p^3)$ as estimated at the paragraph just after the proof of Lemma \ref{lem:method3comp}. 
We also observe the following from Tables \ref{table:2} -- \ref{table:3}:
\begin{itemize}
\item For each prime $p$ with $13 \leq p \leq 47$, the most expensive step is Step 4.
\item In each case, we measured time for Steps 1 and 2 respectively.
Time for Step 1 is within $0.1$ seconds, and it is negligible compared to Step 2.
We see from Table \ref{table:2} that Step 2 behaves as ${O}(p^3)$ estimated in Lemma \ref{lem:method3comp}.
Table \ref{table:4} implies that the total number $\# (\mathcal{L}^{(V_4)} \cup \mathcal{L}^{\rm (others)} )$ of triples $(\nu,\mu,\lambda)$ obtained in Step 2 such that $C_{\nu,\lambda,\mu}$ is a s.sp.\ non-hyperelliptic Howe curve of genus $3$ follows $O(p^3)$, as expected in Subsection \ref{subsec:enum}.
\item We observe from Table \ref{table:4} that the \textcolor{black}{execution} time for Step 3 \textcolor{black}{behaves as $\tilde{O}(p^3)$}.
As results of Step 3, the number of $(\nu,\lambda,\mu)$ with $\mathrm{Aut}(C_{\nu,\lambda,\mu}) = V_4$ (resp.\ $\mathrm{Aut}(C_{\nu,\lambda,\mu}) \supsetneq V_4$) is $O(p^3)$ (resp.\ $O(p^2)$).
\item As Lemma \ref{lem:V4isomOort} shows, the cardinality $\# \mathcal{L}^{(V_4)}_J$ is bounded by $2592$.
\item As described in Lemma \ref{lem:method3comp} and its proof, we see from Tables \ref{table:2} and \ref{table:3} that Step 4b is costly compared to Step 4a for all $p$ in our computation.
In particular, Table \ref{table:3} also shows that both $\# \mathcal{J}$ and $\# \mathcal{L}^{\rm (others)}$ would follow $O(p^3)$ and $O(p^2)$, and that the execution time of Step 4a (resp.\ 4b) behaves \textcolor{black}{as} $\tilde{O} (\# \mathcal{J} \times (\mathrm{max}_{J \in \mathcal{J}} \# \mathcal{L}^{(V_4)}_J)^2) = \tilde{O}(p^3)$ (resp.\ $\tilde{O}((\# \mathcal{L}^{\rm (others)})^2) = \tilde{O}(p^4)$).
\end{itemize}

\vspace{-3mm}

\renewcommand{\arraystretch}{0.95}
\begin{table}[H]
\centering{
\caption{The total number of isomorphsim classes of s.sp.\ {\it non-hyperelliptic} Howe curves of genus $3$ over $\overline{\mathbb{F}_{p}}$ for small $p$, and their classification by types of automorphism groups.
}
\label{table:5}
\vspace{3mm}
\begin{tabular}{c||r||r||r|r|r|r|r|r||r} \hline
\multirow{2}{*}{$p$} & \multirow{2}{*}{Total} & \multirow{2}{*}{$V_4$~} & \multicolumn{7}{|c}{Others} \\ \cline{4-10} &  &  & $D_4$ \ \ & $G_{16}$ & $S_4$ \ & $G_{48}$ & $G_{96}$ & $G_{168}$  & Total \\ \hline
$11$ &   {\bf 8} &   {\bf 1} &   1 & 0 &  4 & 1 & 1 & 0 &   {\bf 7}  \\ \hline
$13$ &  {\bf 11} &   {\bf 2} &   4 & 0 &  4 & 0 & 0 & 1 &   {\bf 9}  \\ \hline
$17$ &  {\bf 23} &   {\bf 8} &   8 & 0 &  6 & 0 & 0 & 1 &  {\bf 15}  \\ \hline
$19$ &  {\bf 34} &  {\bf 14} &  11 & 1 &  6 & 0 & 1 & 1 &  {\bf 20}  \\ \hline
$23$ &  {\bf 58} &  {\bf 28} &  17 & 1 & 10 & 1 & 1 & 0 &  {\bf 30}  \\ \hline
$29$ & {\bf 119} &  {\bf 70} &  35 & 0 & 14 & 0 & 0 & 0 &  {\bf 49}  \\ \hline
$31$ & {\bf 142} &  {\bf 88} &  38 & 2 & 12 & 0 & 1 & 1 &  {\bf 54}  \\ \hline
$37$ & {\bf 249} & {\bf 168} &  63 & 0 & 18 & 0 & 0 & 0 &  {\bf 81}  \\ \hline
$41$ & {\bf 339} & {\bf 240} &  80 & 0 & 18 & 0 & 0 & 1 &  {\bf 99}  \\ \hline
$43$ & {\bf 394} & {\bf 285} &  85 & 3 & 20 & 0 & 1 & 0 & {\bf 109}  \\ \hline
$47$ & {\bf 509} & {\bf 381} & 103 & 3 & 19 & 1 & 1 & 1 & {\bf 128}  \\ \hline
\end{tabular}
}
\end{table}
\renewcommand{\arraystretch}{1}

\vspace{-5mm}

\renewcommand{\arraystretch}{0.95}
\begin{table}[H]
\centering{
\caption{Timing data for Method \ref{method3}.
For each $p$, the total time is shown in bold fonts.
}
\label{table:2}
\vspace{3mm}
\begin{tabular}{c||r||r|r|r|r} \hline
\multirow{2}{*}{$p$} & Total \ \ &  \multicolumn{4}{|c}{Time (s) for each step}  \\ \cline{3-6}
  & time (s) & 1 \& 2 \ & 3 \ \ \ &  4a \ \ \ & 4b \ \ \  \\ \hline
$11$ & {\bf 59.6}    & $<$ 0.1 &   30.3 &     0.7 &    28.6 \\ \hline
$13$ & {\bf 128.8}   &     0.1 &   64.0 &     4.2 &    60.5 \\ \hline
$17$ & {\bf 158.6}   &     0.1 &   71.8 &    10.4 &    76.2 \\ \hline
$19$ & {\bf 206.1}   &     0.2 &   88.1 &    23.6 &    94.2 \\ \hline
$23$ & {\bf 392.7}   &     0.3 &  171.1 &    53.5 &   167.8 \\ \hline
$29$ & {\bf 526.8}   &     0.6 &  128.3 &   121.8 &   276.1 \\ \hline
$31$ & {\bf 865.5}   &     0.6 &  271.1 &   178.7 &   415.0 \\ \hline
$37$ & {\bf 4289.6}  &     1.8 & 1158.3 &  1125.9 &  2003.6 \\ \hline
$41$ & {\bf 12090.0} &     2.4 &  864.5 &  1339.3 &  9883.8 \\ \hline
$43$ & {\bf 54463.7} &     3.0 &  667.3 & 11732.3 & 42061.2 \\ \hline
$47$ & {\bf 16166.8} &     4.7 & 1082.7 &  1844.0 & 13235.3 \\ \hline
\end{tabular}
}
\end{table}
\renewcommand{\arraystretch}{1}

\renewcommand{\arraystretch}{1}
\begin{table}[t]
\centering{
\caption{The total number of $(\nu, \lambda, \mu) \in \overline{\mathbb{F}_{p}}$ computed in Step 2 of Method \ref{method3} such that $C_{\nu,\lambda,\mu}$ is a s.sp.\ curve over $\overline{\mathbb{F}_{p}}$, and their classification obtained in Step 3 by types of $\mathrm{Aut}(C_{\nu, \lambda, \mu})$.
}
\label{table:4}
\vspace{-2mm}
\begin{tabular}{c||r||r||r||r|r|r|r|r|r} \hline
\multirow{2}{*}{$p$} & Num.\ of   & Time (s) & \multicolumn{7}{|c}{Classification of $(\nu, \lambda, \mu)$'s by $\mathrm{Aut}(C_{\nu,\lambda,\mu})$} \\ \cline{4-10}
& $(\nu, \lambda, \mu)$'s & for Step 3 & $V_4$ \ \ & $D_4$ \ \ & $G_{16}$ & $S_4$ \ & $G_{48}$ & $G_{96}$ & $G_{168}$   \\ \hline
$11$ &    780 &   {\bf 30.3} &    96 &   192 &   0 &  384 & 24 & 84 &  0 \\ \hline
$13$ &   2592 &   {\bf 64.0} &   576 &  1152 &   0 &  768 &  0 &  0 & 96 \\ \hline
$17$ &   3744 &   {\bf 71.8} &  1152 &  1728 &   0 &  768 &  0 &  0 & 96 \\ \hline
$19$ &   4860 &   {\bf 88.1} &  1728 &  2112 &  72 &  768 &  0 & 84 & 96 \\ \hline
$23$ &   6688 &  {\bf 171.1} &  2688 &  2688 &  72 & 1152 & 24 & 84 &  0 \\ \hline
$29$ &  13632 &  {\bf 128.3} &  6624 &  5472 &   0 & 1536 &  0 &  0 &  0 \\ \hline
$31$ &  15492 &  {\bf 271.1} &  7968 &  5664 & 144 & 1536 &  0 & 84 & 96 \\ \hline
$37$ &  25920 & {\bf 1158.3} & 14688 &  9312 &   0 & 1920 &  0 &  0 &  0 \\ \hline
$41$ &  31968 &  {\bf 864.5} & 19008 & 10944 &   0 & 1920 &  0 &  0 & 96 \\ \hline
$43$ &  35964 &  {\bf 667.3} & 22032 & 11712 & 216 & 1920 &  0 & 84 &  0 \\ \hline
$47$ &  43044 & {\bf 1082.7} & 27456 & 12960 & 216 & 2208 & 24 & 84 & 96 \\ \hline
\end{tabular}
}
\end{table}
\renewcommand{\arraystretch}{1}

\renewcommand{\arraystretch}{0.85}
\begin{table}[t]
\centering{
\caption{Time comparison between Steps 4a and 4b of Method \ref{method3}, where $N_{\rm 4a}$ (resp.\ $N_{\rm 4b}$) denotes the number of isomorphism tests executed in Step 4a (resp.\ 4b).
The notations $\mathcal{J}$, $\mathcal{L}^{(V_4)}_J$, and $\mathcal{L}^{\rm (others)}$ are same as in Method \ref{method3}.
}
\label{table:3}
\vspace{-2mm}
\begin{tabular}{c||r|r|r||r||r|r||r} \hline
\multirow{2}{*}{$p$}  & \multicolumn{4}{|c||}{Step 4a} & \multicolumn{3}{|c}{Step 4b}   \\ \cline{2-8}
&  $\# \mathcal{J}$ & $\# \mathcal{L}^{(V_4)}_J$  & $N_{\rm 4a}$ \ \  & Time (s) & $\# \mathcal{L}^{\rm (others)}$ & $N_{\rm 4b}$ \ \ & Time (s) \\ \hline
$11$ &  1 &   $\leq 96$ &     95 &     0.7 &   684 &   1253 &    28.6 \\ \hline
$13$ &  1 &  $\leq 576$ &    862 &     4.2 &  2016 &   4887 &    60.5 \\ \hline
$17$ &  2 &  $\leq 576$ &   2872 &    10.4 &  2592 &  11601 &    76.2 \\ \hline
$19$ &  3 &  $\leq 864$ &   5746 &    23.6 &  3132 &  16840 &    94.2 \\ \hline
$23$ &  6 &  $\leq 768$ &   8708 &    53.5 &  4020 &  30486 &   167.8 \\ \hline
$29$ &  7 & $\leq 1728$ &  46730 &   121.8 &  7008 & 100367 &   276.1 \\ \hline
$31$ &  9 & $\leq 1728$ &  57752 &   178.7 &  7524 & 126870 &   415.0 \\ \hline
$37$ & 10 & $\leq 2592$ & 172200 &  1125.9 & 11232 & 317871 &  2003.6 \\ \hline
$41$ & 16 & $\leq 2592$ & 202800 &  1339.3 & 12960 & 479037 &  9883.8 \\ \hline
$43$ & 19 & $\leq 2592$ & 241203 & 11732.3 & 13932 & 554135 & 42061.2 \\ \hline
$47$ & 29 & $\leq 2592$ & 272739 &  1844.0 & 15588 & 726460 & 13235.3 \\ \hline
\end{tabular}
}
\end{table}
\renewcommand{\arraystretch}{1}

\subsubsection{\bf Existence results and fields of definition}

For the hyperelliptic case, we have the following results for every prime $p$ with $3 \leq p \leq 200$: 
\begin{enumerate}
 \item We see from Table \ref{table:1} that there is no s.sp.\ hyperelliptic Howe curve of genus $3$ if $p \in \{ 3,5,13 \}$, and otherwise such curves do exist.
 \item[(2)] We also examined with Magma that the obtained s.sp.\ hyperelliptic curves in Table \ref{table:1} are all $\mathbb{F}_{p^2}$-maximal (resp.\ $\mathbb{F}_{p^2}$-minimal) when $p \equiv 3 \pmod{4}$ (resp.\ $p \equiv 1 \pmod{4}$), as Corollary \ref{cor:max} implies.
    In fact, when $p \equiv 3 \pmod{4}$ with $p \neq 3$, we succeeded in finding a triple $(a,b,c) \in (\mathbb{F}_{p^4})^3$ such that $C_{a,b,c} : y^2 = (x^2-1)(x^2-a)(x^2-b)(x^2-c)$ is a s.sp.\ curve of genus $3$ defined over the prime field $\mathbb{F}_{p}$, namely each $x^i$-coefficient in $(x^2-1)(x^2-a)(x^2-b)(x^2-c)$ belongs to $\mathbb{F}_{p}$.
\end{enumerate}
We also investigate the existence of non-hyperelliptic Howe curves of genus $3$ for $p$ larger than ones in Table \ref{table:5}.
For this, we executed the decision-version (described in the first paragraph of Section \ref{subsec:imp}) of Method \ref{method3}, and obtain the following results for every prime $p$ with $11 \leq p \leq 20000$:
\begin{enumerate}
 \item[(3)] For every $p$, there exists a s.sp.\ non-hyperelliptic Howe curve of genus $3$.
 \item[(4)] When $p \equiv 3 \pmod{4}$, we succeeded in finding a triple $(\nu,\lambda,\mu) \in (\mathbb{F}_{p^4})^3$ such that the corresponding non-hyperelliptic Howe curve of genus $3$ is a s.sp.\ curve defined over $\mathbb{F}_{p}$.
 However, in the case where $p \equiv 1 \pmod{4}$, there are many $p$ such that any non-hyperelliptic s.sp.\ Howe curve of genus $3$ constructed as above is not defined over $\mathbb{F}_{p}$.
\end{enumerate}
Note that the execution time for each $p$ is at most a few seconds (it takes a few hours in total to terminate for all $p$).

The existence result (1) (resp.\ (3)) examines Oort's result~\cite[Theorem 5.12 (3)]{oort1991hyperelliptic} in the hyperelliptic case with $p \equiv 3 \pmod{4}$ (resp.\ \cite[Theorem 5.12 (1)]{oort1991hyperelliptic} in the non-hyperelliptic case).

As for the maximality/minimality and the field of definition, Ibukiyama showed in \cite{Ibukiyama} that there exists a s.sp.\ curve $C$ of genus $3$ defined over the {\it prime} field $\mathbb{F}_{p}$ which is $\mathbb{F}_{p^2}$-maximal for each odd prime $p$, but it is not known whether each $C$ is hyperelliptic or non-hyperelliptic.
As such a curve $C$, we expect from our computational results (2) and (4) together with Corollary \ref{cor:max} that we can take it to be \textcolor{black}{either a hyperelliptic curve or a non-hyperelliptic Howe curve} when $p \equiv 3 \pmod{4}$, but for the case $p \equiv 1 \pmod{4}$, we need to consider a curve that is not realized as a Howe curve.

\subsection{Some remarks toward higher genus}
Taking $C_1$ and $C_2$ in Subsection \ref{subsec:Howe} to be curves of various genera, construction and enumeration of s.sp.\ Howe curves in higher genus would be also possible, see \cite{Howe}, \cite{KHH}, and \cite{OKH22} for the genus-$4$ case, and \cite{Howe2017} for non-hyperelliptic case of genera $5$, $6$, and $7$. 
We here focus on the case where the resulting Howe curve is hyperelliptic, and restrict ourselves to the case of genera $4$, $5$, and $6$ for simplicity.
Let $C_1$ and $C_2$ be hyperelliptic curves of genera $g_1$ and $g_2$ over \textcolor{black}{$k$}, and $C$ the desingularization of $C_1 \times_{\mathbb{P}^1} C_2$. 
To construct s.sp.\ hyperelliptic Howe curves in genera $4$, $5$, and $6$, it suffices from Lemma \ref{lem:V4} that we take the following values as $g_1$ and $g_2$:
\begin{enumerate}
    \item[{\bf (I)}] $g_1 = g_2 = 2$: In this case, $g = 4$.
    \item[{\bf (II)}] $g_1 =2$, $g_2 = 3$: In this case, $g= 5$.
    \item[{\bf (III)}] $g_1 = g_2 = 3$: In this case, $g= 6$.
\end{enumerate}
For {\bf (I)}, Ohashi-Kudo-Harashita~\cite{OKH22} recently proposed an enumeration algorithm with complexity $O(p^3)$ restricting to the case where $\mathrm{Aut}(C) = V_4$, and succeeded in finding s.sp.\ curves in every characteristic $p$ with $19 \leq p \leq 6691$.
The reason why we can efficiently enumerate s.sp.\ Howe curves in this case is that we already have an efficient algorithm~\cite[\S 5A]{KHH}, which enumerates all genus-$2$ s.sp.\ curves in $O(p^3)$ by producing curves whose Jacobian varieties are Richelot isogenous to those of some given $\mathbb{F}_{p^2}$-maximal curves of genus $2$.

Regarding {\bf (II)} and {\bf (III)}, the enumeration is of course possible, but the complexity would becomes exponential due to the following reason:
In the case of genus $3$, there is no polynomial-time algorithm as in \cite[\S 5A]{KHH} at this point. 
A method with exponential complexity for the genus-$3$ case is the following:

\begin{itemize}
\item It requires to search among genus-$3$ curves of the form
\begin{equation}\label{eq:genus3}
    D : y^2 = x(x-1)(x^5+a_4 x^4 + a_3 x^3 + a_2 x^2 + a_1 x + a_0)
\end{equation}
with $a_i \in \mathbb{F}_{p^2}$, taking the case $\mathrm{Aut}(D) \not\supset V_4$ into account, and it suffices to solve a zero-dimensional multivariate system in $a_i$ obtained from the condition ``the Cartier-Manin matrix of $D$ is zero'' by the Gr\"{o}bner basis computation as in \cite{KH18ful}, \cite{KH18}.
However, the complexity of this method would be exponential with respect to $p$ in \textcolor{black}{the} worst case.

\end{itemize}
If we restrict ourselves to the case of $\mathrm{Aut}(D) \supset V_4$, we can apply the methods in Subsection \ref{subsec:enum}, and the total complexity \textcolor{black}{would be bounded by a polynomial function of $p$}.

Once s.sp.\ genus-$3$ hyperelliptic curves are obtained, we can construct s.sp.\ curves of genus $5$ and $6$, as in the method in \cite{OKH22} (we will study details in \cite{Kudo2022}).

\subsection*{Acknowledgments}
\textcolor{black}{The authors thank the anonymous referee for his/her comments and suggestions, which have helped the authors significantly improve the paper.}
The authors also thank Shushi Harashita, Tomoyoshi Ibukiyama, Toshiyuki Katsura, Ryo Ohashi, and Katsuyuki Takashima for helpful comments.
This work was supported by JSPS Research Fellowship for \textcolor{black}{Young} Scientists 21J10711, and JSPS Grant-in-Aid for Young Scientists 20K14301.

\bibliography{richelot}
\bibliographystyle{plain}

\appendix

\section{Some properties of a certain bivariate quartic}

\textcolor{black}{
Let $f(Y,Z)$ be a bivariate quartic polynomial of the form 
\begin{equation}\label{eq:f}
    f=b_{40}Y^4 + b_{22} Y^2Z^2  + b_{20} Y^2  + b_{04}Z^4 + b_{02} Z^2+ b_{00}
\end{equation}
in the variables $Y$ and $Z$ over $k$.
Assume $b_{40},b_{04},b_{00} \neq 0$.}

\begin{lemma}\label{lem:linear}
\textcolor{black}{
With notation as above, assume that $f$ has a linear factor.
Then we have the following system of equations:
\begin{equation}\label{eq:system1}
    b_{22}^2 - 4 b_{40} b_{04} = b_{20}^2 - 4 b_{40}b_{00} = b_{22} b_{20} + 2 b_{40} b_{02} = 0.
\end{equation}}
\vspace{-4mm}
\end{lemma}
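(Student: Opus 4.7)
The plan is to exploit the $(Y,Z) \mapsto (\pm Y, \pm Z)$ symmetry of $f$. Suppose $L = aY + bZ + c \in k[Y,Z]$ is a linear factor of $f$. Since $f$ is invariant under the Klein four-group generated by the sign changes $Y \mapsto -Y$ and $Z \mapsto -Z$, the three ``conjugate'' linear forms
\[
L_1 := -aY + bZ + c,\qquad L_2 := aY - bZ + c,\qquad L_3 := -aY - bZ + c
\]
also divide $f$.

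First I would rule out degenerate factors. If $a = 0$, then $L$ depends only on $Z$, and $L \mid f$ forces $f(Y, -c/b) = 0$ identically in $Y$; looking at the $Y^4$-coefficient of this specialization yields $b_{40} = 0$, contradicting the hypothesis. Analogous specializations rule out $b = 0$ (via $b_{04}$) and $c = 0$ (via $b_{00}$). Hence $a, b, c$ are all nonzero, and a direct proportionality check then shows that $L, L_1, L_2, L_3$ are pairwise non-associate. Since they are pairwise coprime linear forms each dividing $f$, their product divides $f$; comparing degrees gives $f = \lambda\, L L_1 L_2 L_3$ for some $\lambda \in k^{\times}$.

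Next I would compute this product explicitly. Pairing $L L_3$ with $L_1 L_2$, each factor becomes a difference of squares, and one obtains
\[
L L_1 L_2 L_3 = a^4 Y^4 - 2 a^2 b^2 Y^2 Z^2 + b^4 Z^4 - 2 a^2 c^2 Y^2 - 2 b^2 c^2 Z^2 + c^4.
\]
Matching coefficients with \eqref{eq:f} yields
\[
b_{40} = \lambda a^4,\ b_{04} = \lambda b^4,\ b_{00} = \lambda c^4,\ b_{22} = -2\lambda a^2 b^2,\ b_{20} = -2\lambda a^2 c^2,\ b_{02} = -2\lambda b^2 c^2,
\]
from which the three identities $b_{22}^2 - 4 b_{40} b_{04} = 0$, $b_{20}^2 - 4 b_{40} b_{00} = 0$, and $b_{22} b_{20} + 2 b_{40} b_{02} = 0$ follow by a one-line substitution each.

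The only mildly delicate point is confirming that the four conjugate forms are genuinely pairwise non-associate, so as to justify that their product (and not some proper power of a common factor) equals $f$ up to a constant; the nonvanishing of $a, b, c$ already established handles this cleanly, so I do not expect a real obstacle.
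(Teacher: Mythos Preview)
Your proof is correct and follows essentially the same approach as the paper's: exploit the $(Y,Z)\mapsto(\pm Y,\pm Z)$ symmetry to produce four linear factors, multiply them out, and compare coefficients. Your handling is in fact a bit cleaner than the paper's, which asserts up front that the linear factor can be taken as $Y+a_1Z+a_2$ with $a_1,a_2\neq 0$ and then separately (and somewhat redundantly) treats the case where two of the conjugate linear forms coincide; you instead rule out $a=0$, $b=0$, $c=0$ directly by specialization before invoking the four-factor decomposition, which makes the distinctness of the four factors immediate in characteristic $\neq 2$.
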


\begin{proof}
\textcolor{black}{
Assume that $f$ has a linear factor $L_1 \in k[Y,Z]$.
From our assumptions $b_{40},b_{04},b_{00} \neq 0$, we may suppose $L_1 = Y + a_1 Z + a_2$, where $a_1, a_2 \in k \smallsetminus \{ 0 \}$.
Then, $f$ is also divided by
\[
\begin{aligned}
    L_2 &:= - L_1(-Y,Z) = Y - a_1 Z - a_2,\\
    L_3 &:= L_1(Y,-Z) = Y - a_1 Z + a_2,\\
    L_4 &:= - L_1(-Y,-Z) = Y + a_1 Z - a_2.\\
\end{aligned}
\]
If $L_1$, $L_2$, $L_3$, and $L_4$ are all different, then $f =b_{40}L_1 L_2 L_3 L_4$, so that
\begin{eqnarray}
f &=& b_{40}(Y^2 - (a_1Z+a_2)^2 )(Y^2 - (a_1Z-a_2)^2) \nonumber \\
&=& b_{40}(Y^4 -2(a_1^2Z^2+a_2^2) Y^2+(a_1^2Z^2 - a_2^2)^2) \nonumber \\
&=& b_{40}(Y^4 - 2a_1^2 Y^2 Z^2 - 2 a_2^2 Y^2 + a_1^4 Z^4 - 2 a_1^2 a_2^2 Z^2 + a_2^4).\label{eq:f2}
\end{eqnarray}
Comparing the coefficients in \eqref{eq:f} and \eqref{eq:f2}, we obtain the system \eqref{eq:system1}.}

\textcolor{black}{
We consider the case where two of $L_1$, $L_2$, $L_3$, and $L_4$ are equal to each other.
Without loss of generality, we may suppose that $L_1$ is equal to one of the others.
In this case, it follows from $a_1 \neq 0$ that $L_1 \neq L_2$ and $L_1 \neq L_3$, whence $L_1 = L_4$.
Therefore, we have $a_2 = 0$, so that $f$ is divisible by $Y^2 - a_1^2 Z^2$.
A straightforward computation shows
\[
\begin{aligned}
    f =& (Y^2 - a_1^2 Z^2) Q + (b_{40} a_1^4 + b_{22}a_1^2 + b_{04})Z^4 + (b_{20}a_1^2 + b_{02})Z^2 + b_{00}
\end{aligned}
\]
with $Q= b_{40} Y^2 + (b_{40}a_1^2 + b_{22}) Z^2  + b_{20}$, a contradiction to $b_{00} \neq 0$.}
\end{proof}



\begin{lemma}\label{lem:Q}
\textcolor{black}{
With notation as above, if $f$ is factored into $f = b_{40} Q_1 Q_2$ for some monic irreducible quadratic polynomials $Q_1$ and $Q_2$, then either of the following four cases holds:
\[
\begin{aligned}
   & {\bf (I)} \
    b_{22}^2 - 4 b_{40} b_{04} = 0.
    \quad
    {\bf (II)} \
    \begin{cases}
       b_{22}^2 - 4 b_{40} b_{04} = 0,\\
       b_{20}^2 - 4 b_{40} b_{00} = 0.
    \end{cases}
    \quad
     {\bf (III)} \
    \begin{cases}
       b_{20}^2 - 4 b_{40} b_{00} = 0,\\
       b_{02}^2 - 4 b_{04}b_{00} = 0.
    \end{cases}\\
    & {\bf (IV)} \ 2b_{40} b_{02} - b_{22} b_{20} \pm \sqrt{b_{22}^2 - 4 b_{40} b_{04}} \sqrt{b_{20}^2 - 4 b_{40} b_{00}} = 0.
\end{aligned}
\]}
\end{lemma}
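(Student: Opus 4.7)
The plan is to exploit the fact that $f$ is invariant under the two involutions $\sigma_Y\colon(Y,Z)\mapsto(-Y,Z)$ and $\sigma_Z\colon(Y,Z)\mapsto(Y,-Z)$, since \eqref{eq:f} contains only even powers of $Y$ and only even powers of $Z$. Because $b_{40}\neq 0$, both $Q_1$ and $Q_2$ must have nonzero $Y^2$-coefficient, so I can write them in the normalized form $Q_i=Y^2+b_iYZ+c_iZ^2+d_iY+e_iZ+g_i$. Since $k[Y,Z]$ is a UFD and the $Q_i$ are irreducible, $\sigma_Y$ must either fix both $Q_i$ or swap them, and similarly for $\sigma_Z$. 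This splits the argument into four subcases.

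\smallskip

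In the \emph{fix/fix} subcase, $\sigma_YQ_i=Q_i$ and $\sigma_ZQ_i=Q_i$ force $b_i=d_i=e_i=0$, so $Q_i=Y^2+c_iZ^2+g_i$. Comparing coefficients of $b_{40}Q_1Q_2$ with $f$ gives $c_1+c_2=b_{22}/b_{40}$, $c_1c_2=b_{04}/b_{40}$, $g_1+g_2=b_{20}/b_{40}$, $g_1g_2=b_{00}/b_{40}$, $c_1g_2+c_2g_1=b_{02}/b_{40}$. A direct calculation then shows
\[
2b_{40}b_{02}-b_{22}b_{20}=-b_{40}^2(c_1-c_2)(g_1-g_2),
\]
while $\sqrt{b_{22}^2-4b_{40}b_{04}}=\pm b_{40}(c_1-c_2)$ and $\sqrt{b_{20}^2-4b_{40}b_{00}}=\pm b_{40}(g_1-g_2)$. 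Choosing signs appropriately yields condition (IV). (If $Q_1=Q_2$, the same formula degenerates to $0=0$.)

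\smallskip

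In the \emph{fix/swap} subcase, $\sigma_ZQ_1=Q_2$ together with $b_i=d_i=0$ forces $c_1=c_2=:c$, $g_1=g_2=:g$, and $e_2=-e_1=:-e$, so
\[
Q_1Q_2=(Y^2+cZ^2+g)^2-e^2Z^2.
\]
Expanding, one reads off $b_{22}/b_{40}=2c$, $b_{04}/b_{40}=c^2$, $b_{20}/b_{40}=2g$, $b_{00}/b_{40}=g^2$, whence $b_{22}^2-4b_{40}b_{04}=0$ and $b_{20}^2-4b_{40}b_{00}=0$, i.e., condition (II). The \emph{swap/swap} subcase is entirely analogous: one ends up with $Q_1=Y^2+bYZ+cZ^2+g$ and $Q_2=Y^2-bYZ+cZ^2+g$, so $Q_1Q_2=(Y^2+cZ^2+g)^2-b^2Y^2Z^2$, and comparing coefficients gives $b_{20}^2-4b_{40}b_{00}=0$ and $b_{02}^2-4b_{04}b_{00}=0$, which is condition (III). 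Finally, in the \emph{swap/fix} subcase, $Q_1=Y^2+cZ^2+dY+g$ and $Q_2=Y^2+cZ^2-dY+g$, giving $b_{22}/b_{40}=2c$ and $b_{04}/b_{40}=c^2$, hence $b_{22}^2-4b_{40}b_{04}=0$, i.e., condition (I).

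\smallskip

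No step looks technically hard; the main thing to be careful about is the bookkeeping of the sign ambiguity in the square roots in case (IV), and checking that the ``monic in $Y^2$'' normalization is legitimate (which follows from $b_{40}\neq 0$ and the fact that in $k[Y,Z]$ both irreducible factors of $f$ must contribute a $Y^2$ to the $Y^4$-term). Together these four subcases exhaust all possibilities forced by the $(\sigma_Y,\sigma_Z)$-symmetry, completing the proof.
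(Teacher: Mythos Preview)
Your proof is correct and follows essentially the same route as the paper: both arguments arrive at the same four shapes for $(Q_1,Q_2)$ and then read off the stated relations by comparing coefficients. The only difference is that you justify the case split conceptually via the $(\sigma_Y,\sigma_Z)$-symmetry and unique factorization, whereas the paper simply asserts the four shapes after ``a tedious computation''; your explanation is cleaner, but the subsequent computations are identical to the paper's cases (I)--(IV).
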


\begin{proof}
\textcolor{black}{
A tedious computation implies that there are four cases on $(Q_1,Q_2)$:
 \[
 \begin{aligned}
    & {\bf (I)} \
    \begin{cases}
       Q_1 = Y^2 + a_2 Z^2 + a_3 Y + a_5,\\
       Q_2 = Y^2 + a_2 Z^2 - a_3 Y + a_5.
    \end{cases}
    \quad
    {\bf (II)} \
    \begin{cases}
       Q_1 = Y^2 + a_2 Z^2 + a_4 Z + a_5,\\
       Q_2 = Y^2 + a_2 Z^2 - a_4 Z + a_5.
    \end{cases}\\
    & {\bf (III)} \
     \begin{cases}
       Q_1 = Y^2 + a_1 Y Z + a_2 Z^2 + a_5,\\
       Q_2 = Y^2 - a_1 Y Z + a_2 Z^2 + a_5.
    \end{cases}
    \quad
    {\bf (IV)} \
    \begin{cases}
        Q_1 = Y^2 + a_2 Z^2 + a_5,\\
       Q_2 = Y^2  + a_2' Z^2 + a_5'.
    \end{cases}
    \end{aligned}
    \]
    In each case, $f$ is expanded as follows:
    \[
    \begin{array}{cl}
       {\bf (I)} & f = b_{40} (Y^4 + 2a_2 Y^2 Z^2 + (-a_3^2 + 2 a_5) Y^2 + a_2^2 Z^4 +2 a_2 a_5 Z^2 + a_5^2),\\
       {\bf (II)} & f = b_{40} (Y^4 + 2a_2 Y^2 Z^2 + 2 a_5 Y^2 + a_2^2 Z^4 +(2 a_2 a_5 - a_4^2) Z^2 + a_5^2),\\
       {\bf (III)} & f = b_{40} (Y^4 + (-a_1^2 + 2 a_2) Y^2 Z^2 + 2 a_5 Y^2 + a_2^2 Z^4 +2 a_2 a_5  Z^2 + a_5^2),\\
       {\bf (IV)} & f = b_{40} (Y^4 + (a_2 + a_2') Y^2 Z^2 + (a_5 +a_5')Y^2 + a_2 a_2' Z^4 +(a_2 a_5' +a_5 a_2')  Z^2 + a_5 a_5').
    \end{array}
    \]
As for the case (IV), we have the following:
\begin{itemize}
        \item From the coefficients of $Y^2Z^2$ and $Z^4$, the elements $b_{40}a_2$ and $b_{40}a_2'$ are the roots of $X^2 -b_{22} X + b_{40}b_{04}$, whence
        \[
        2b_{40}a_2, 2b_{40}a_2' = {b_{22} \pm \sqrt{b_{22}^2 - 4 b_{40} b_{04}}}.
        \]
        \item From the coefficients of $Y^2$ and $1$, the elements $b_{40}a_5$ and $b_{40}a_5'$ are the roots of $X^2 -b_{20} X + b_{40}b_{00}$, whence
        \[
        2b_{40}a_5, 2b_{40}a_5' = b_{20} \pm \sqrt{b_{20}^2 - 4 b_{40} b_{00}}.
        \]
        \item Once $a_2$, $a_2'$, $a_5$, and $a_5'$ are specified, from the coefficient of $Z^2$, we have $b_{40}(a_2 a_5' + a_5 a_2') = b_{02}$, whence
        \[
        \begin{aligned}
        4b_{40} b_{02} &= 4 b_{40}^2 (a_2 a_5' + a_5 a_2') =(2b_{40}a_2)(2b_{40}a_5') +  (2b_{40}a_2')(2b_{40}a_5) \\
        &= 2 \left(b_{22} b_{20} \pm \sqrt{b_{22}^2 - 4 b_{40} b_{04}} \sqrt{b_{20}^2 - 4 b_{40} b_{00}} \right),
        \end{aligned}
        \]
        as desired.
    \end{itemize}}
\end{proof}

\if 0

\begin{remark}[to be removed]
    \textcolor{black}{
By setting $\alpha_2 := 1$, $\alpha_3 := \nu$, $\beta_2 := \mu$, $\beta_3 := \mu \lambda$,
\[
\begin{aligned}
Q_1 &:= (x-\alpha_2)(x-\alpha_3) = x^2 - \tau_1 x + \tau_2,\\
Q_2 &:= (x-\beta_2)(x-\beta_3) = x^2 - \rho_1 x + \rho_2,
\end{aligned}
\]
we can write $E_1 : y_1^2 = x Q_1$, $E_2 : y_2^2 = x Q_2$, and $E_3 : y_3^2 = Q_1Q_2$,
where
\[
\begin{array}{llll}
\tau_1 := \alpha_2 + \alpha_3 = 1 + \nu, & \tau_2 := \alpha_2 \alpha_3 =\nu, & \rho_1 := \beta_2 + \beta_3 = (1+\lambda)\mu, & \rho_2 := \beta_2\beta_3 = \mu^2 \lambda.
\end{array}
\]
Putting $Y := \displaystyle \frac{y_1}{x}$ and $Z := \dfrac{y_2}{x}$ and squaring both sides, we obtain $x Y^2 = Q_1$ and $x Z^2 = Q_2$.
Here, we put
\[
\begin{aligned}
f_1 &:= Q_1 - x Y^2 = x^2 + (- \tau_1 - Y^2) x  + \tau_2,\\
f_2 &:= Q_2 - x Z^2 = x^2 + (- \rho_1 - Z^2) x + \rho_2,\\
f &:= \mathrm{Res}_x (f_1,f_2) \in k[Y,Z].
\end{aligned}
\]
Denoting by $|M|$ the determinant of a square matrix $M$, we have
\[
\begin{aligned}
    &f =
\begin{vmatrix}
1 & -\tau_1 - Y^2 & \tau_2 & 0 \\
0 & 1 & -\tau_1 - Y^2 & \tau_2\\
1 & -\rho_1 - Z^2 & \rho_2 & 0 \\
0 & 1 & -\rho_1 - Z^2 & \rho_2\\
\end{vmatrix}
=
\begin{vmatrix}
1 & -\tau_1 - Y^2 & \tau_2 & 0 \\
0 & 1 & -\tau_1 - Y^2 & \tau_2\\
0 & \tau_1-\rho_1 + Y^2 - Z^2 & \rho_2 -\tau_2 & 0 \\
0 & 1 & -\rho_1 - Z^2 & \rho_2\\
\end{vmatrix}\\
&=
\begin{vmatrix}
1 & -\tau_1 - Y^2 & \tau_2 & 0 \\
0 & 1 & -\tau_1 - Y^2 & \tau_2\\
0 & \tau_1-\rho_1 + Y^2 - Z^2 & \rho_2 -\tau_2 & 0 \\
0 & 0 & \tau_1 -\rho_1 +Y^2 - Z^2 & \rho_2 - \tau_2\\
\end{vmatrix}\\
&=\begin{vmatrix}
1 & -\tau_1 - Y^2 & \tau_2\\
\tau_1-\rho_1 + Y^2 - Z^2 & \rho_2 -\tau_2 & 0 \\
1 & -\rho_1 - Z^2 & \rho_2\\
\end{vmatrix}=\begin{vmatrix}
1 & -\tau_1 - Y^2 & \tau_2\\
\tau_1-\rho_1 + Y^2 - Z^2 & \rho_2 -\tau_2 & 0 \\
0 & \tau_1-\rho_1 +Y^2- Z^2 & \rho_2-\tau_2\\
\end{vmatrix}\\
&=\begin{vmatrix}
1 & -\tau_1 - Y^2 & \tau_2\\
-\rho_1 - Z^2 & \rho_2  & 0 \\
1 & -\rho_1 - Z^2 & \rho_2\\
\end{vmatrix} + \begin{vmatrix}
 -\tau_1 - Y^2 & \tau_2 & 0\\
 1 & -\tau_1 - Y^2 & \tau_2\\
1 & -\rho_1 - Z^2 & \rho_2\\
\end{vmatrix}
\\
&= \rho_2 Y^4 + (-\tau_2 - \rho_2) Y^2 Z^2 + (2\tau_1 \rho_2 - \tau_2 \rho_1 - \rho_1 \rho_2) Y^2 + \tau_2 Z^4 \\
& + (- \tau_1 \tau_2 - \tau_1 \rho_2 + 2 \tau_2 \rho_1)Z^2 + \tau_1^2\rho_2 - \tau_1 \tau_2 \rho_1 - \tau_1 \rho_1 \rho_2 + \tau_2^2 + \tau_2 \rho_1^2 - 2 \tau_2 \rho_2 + \rho_2^2
\end{aligned}
\]
On the other hand,
\[
F=\rho_2 (Y^2 + \tau_1)^2 +\tau_2 (Z^2 + \rho_1)^2 -  (\tau_2 + \rho_2)(Y^2 + \tau_1) (Z^2 + \rho_1) + (\rho_2 - \tau_2)^2
\]
\[
R A^2 + T B^2 - (R+T) A B + (R-T)^2
\]
}
\end{remark}
\fi

\end{document}